\allowdisplaybreaks \numberwithin{equation}{section}
\numberwithin{equation}{section}
\newtheorem{theorem}{Theorem}[section]
\newtheorem{proposition}[theorem]{Proposition}
\newtheorem{corollary}[theorem]{Corollary}
\newtheorem{lemma}[theorem]{Lemma}
\theoremstyle{definition}
\newenvironment{customthm}[1]
{\innercustomthm}
{\endinnercustomthm}
\theoremstyle{remark}
\newtheorem{remark}[theorem]{Remark}
\begin{document}

\title[Clustered helical vortices for 3D incompressible Euler equation]
{Clustered helical vortices for 3D incompressible Euler equation in infinite cylinders}

\author{Daomin Cao, Jie Wan}
	
\address{Institute of Applied Mathematics, Chinese Academy of Sciences, Beijing 100190, and University of Chinese Academy of Sciences, Beijing 100049,  P.R. China}
\email{dmcao@amt.ac.cn}
\address{School of Mathematics and Statistics, Beijing Institute of Technology, Beijing 100081,  P.R. China}
\email{wanjie@bit.edu.cn}


\begin{abstract}
In this article, we first consider  solutions to 
a semilinear elliptic problem in divergence form
\begin{equation*}
\begin{cases}
-\varepsilon^2\text{div}(K(x)\nabla u)=  (u-q|\ln\varepsilon|)^{p}_+,\ \ &x\in \Omega,\\
u=0,\ \ &x\in\partial \Omega
\end{cases}
\end{equation*}
for small values of $ \varepsilon $. We prove that  there exists a family of clustered solutions which have arbitrary many bubbles and collapse  into given maximum points  of  $ q^2\sqrt{\det K} $ as $ \varepsilon\to0. $ Then as an application, we construct clustered traveling-rotating helical vortex solutions to Euler equations in infinite cylinders, such that the support set of corresponding vortices consists of  several helical tubes concentrating near a single helix.
\vspace{0.3cm}

\textbf{Keywords:} Incompressible Euler equation;  Helical symmetry; Semilinear elliptic equations; Clustered solutions; Variational method.

\vspace{0.3cm}
\textbf{MSC:} Primary 76B47\,\,\,\,\,\,\,\,Secondary 76B03, 35A02, 35Q31
\end{abstract}

\maketitle

\section{Introduction and main results}
The motion of the incompressible ideal flow is governed by the following Euler
equations
\begin{equation}\label{Euler eq}
\begin{cases}
\partial_t\mathbf{v}+(\mathbf{v}\cdot \nabla)\mathbf{v}=-\nabla P,\ \ &D\times (0,T),\\
\nabla\cdot \mathbf{v}=0,\ \ &D\times (0,T),\\
\mathbf{v}\cdot \mathbf{n}=0,\ \ &\partial D\times (0,T),
\end{cases}
\end{equation}
where $ D\subseteq \mathbb{R}^3 $ is a domain with $ C^\infty $ boundary, $ \mathbf{v}=(v_1,v_2,v_3) $ is the velocity field, $ P$ is the scalar pressure, $ \mathbf{n} $ is the outward unit normal to $ \partial D $. 
For velocity field $\mathbf{v}$, the corresponding vorticity field is   $ \mathbf{w}=\nabla\times \mathbf{v} $.  Then $ \mathbf{w} $ satisfies the vorticity equation (see \cite{MB})
\begin{equation}\label{Euler eq2}
\begin{split}
\partial_t\mathbf{w}+(\mathbf{v}\cdot \nabla)\mathbf{w}=(\mathbf{w}\cdot \nabla)\mathbf{v}.
\end{split}
\end{equation}

In this paper, we are concerned with concentrated clustered helical vortex solutions to Euler equation \eqref{Euler eq2}. The research of solutions to 3D Euler equations with helical symmetry has attracted great attention in the past decades, see \cite{AS,CW,DDMW,DDMW2,Du,ET,Jiu} and reference therein. Let us first define helical solutions, see \cite{CW2,ET}.  For fixed $ k>0 $, let $ \mathcal{G}_k=\{H_\rho:\mathbb{R}^3\to\mathbb{R}^3 \} $ be the helical transformation group, where
\begin{equation*}
H_\rho(x_1,x_2,x_3)^t=(x_1\cos\rho+x_2\sin\rho, -x_1\sin\rho+x_2\cos\rho, x_3+k\rho)^t.
\end{equation*}
Here $ A^t $ is the transposition of a matrix $ A $. 
Let
$ R_\rho=\begin{pmatrix}
\cos\rho & \sin\rho & 0 \\
-\sin\rho &\cos\rho & 0 \\
0 & 0 & 1
\end{pmatrix} $
be the rotation with respect to $ x_3 $-axis. Define the field of tangents of symmetry lines of $ \mathcal{G}_k $
\begin{equation*}
\overrightarrow{\zeta}= (x_2, -x_1, k)^t.
\end{equation*}

Helical solutions must define on helical domains. A domain $ D\in \mathbb{R}^3 $ is called  a  helical~domain, if $ H_\rho(D)=D $ for any $\rho\in\mathbb{R}$.
Let $ \Omega=D\cap\{x\mid x_3=0\} $ be the section of $ D $ over $ x_1Ox_2 $ plane. Then $ D  $ can be generated by $ \Omega $ by  letting $ D=\cup_{\rho\in\mathbb{R}}H_\rho(\Omega) $. In the following, we always assume that  $ \Omega $ is a simply-connected bounded domain with $ C^\infty $ boundary.

Helical solutions to \eqref{Euler eq} is then defined as follows. We say that  ($\mathbf{v}, P$) is   a  helical solution to \eqref{Euler eq} with pitch $ k $, if ($\mathbf{v}, P$) satisfies \eqref{Euler eq} and the  vector field $ \mathbf{v} $ and scalar function $ P $ satisfies for every $ \rho\in\mathbb{R}, x\in D  $
\begin{equation}\label{helical func}
P(H_{\rho}(x))=P(x);\ \ \mathbf{v}(H_{\rho}(x))=R_\rho \mathbf{v}(x).
\end{equation}
Moreover, we also impose  $ \mathbf{v} $ to satisfy  the following non-swirl condition:
\begin{equation}\label{ortho}
\mathbf{v}\cdot \overrightarrow{\zeta}=0.
\end{equation}
Under assumptions \eqref{helical func}  and \eqref{ortho}, it can be proved  that $ \mathbf{w} $ satisfies (see \cite{ET})
\begin{equation}\label{w formula}
\mathbf{w}=\frac{\omega}{k}\overrightarrow{\zeta},
\end{equation}
where $ \omega:=w_3=\partial_{x_1}v_2-\partial_{x_2}v_1 $, the third component of vorticity field $ \mathbf{w} $, is a helical function. Moreover, $ \omega $ satisfies the 2D vorticity equations
\begin{equation}\label{vor str eq}
\begin{cases}
\partial_t \omega+\nabla \omega\cdot \nabla^\perp\varphi=0, &\Omega\times (0,T),\\
\omega=\mathcal{L}_{K_H}\varphi, &\Omega\times (0,T),\\
\varphi|_{\partial \Omega}=0,
\end{cases}
\end{equation}
where $ \varphi $ is the stream function, $ \perp $ denotes the clockwise rotation through $ \frac{\pi}{2}  $, $ \mathcal{L}_{K_H}\varphi=-\text{div}(K_H(x_1,x_2)\nabla\varphi) $ is   a second-order elliptic operator of divergence type with the coefficient matrix
\begin{equation}\label{coef matrix}
K_H(x_1,x_2)=\frac{1}{k^2+x_1^2+x_2^2}
\begin{pmatrix}
k^2+x_2^2 & -x_1x_2 \\
-x_1x_2 &  k^2+x_1^2
\end{pmatrix},
\end{equation}
see \cite{CW2} for more details. For  helical solution pairs ($\mathbf{v}, P$) to \eqref{Euler eq}, it suffices to solve solutions $ \omega $ to \eqref{vor str eq}.
Note that since $ \mathcal{L}_{K_H} $ is a uniformly elliptic operator and has the same $ L^p $ estimates as $ -\Delta $, many references get similar well-posedness and stability results of solutions to \eqref{vor str eq} as those to 2D Euler equations. \cite{ET} proved the global well-posedness of $ L^1\cap L^\infty $ weak solutions to  \eqref{vor str eq}, which coincides with the classical Yudovich's result \cite{Y} in 2D Euler flows. \cite{Ben} considered nonlinear stability of stationary smooth Euler flows with helical symmetry  by using the direct method of Lyapunov. For more results of the existence and regularity of helical solutions to Euler equations, see \cite{AS, BLN, Du, Jiu}.

The problem of concentrated helical vortex solutions to 3D Euler equations, meanwhile,  has been widely concerned in recent years, see \cite{CW,CW1,DDMW2,GM} and reference therein. It is also called the vortex filament conjecture (see \cite{JS2}) to  3D Euler equations with helical symmetry, that is,  constructing ``true'' helical solutions to Euler equations  such that the corresponding vorticity  concentrates near a helix. The research of this problem  can be traced back to
Helmholtz \cite{He}
and then many articles proved the existence of vortex solutions to Euler equations concentrating near a straight line and a circle, see \cite{CLW,CPY,DDMW,DV,Fr1,FB,SV} for example. 
For concentrated solutions to Euler equations with helical symmetry,  D$\acute{\text{a}}$vila et al.  \cite{DDMW2} constructed rotational-invariant smooth Euler flows with helical symmetry in the whole space. For $ \alpha \in\mathbb{R} $, consider  rotating-invariant solutions to \eqref{vor str eq} being of the form
\begin{equation}\label{104}
\omega(x,t)=w\left (\bar{R}_{-\alpha|\ln\varepsilon| t}(x)\right );\ \ \varphi(x,t)=u\left (\bar{R}_{-\alpha|\ln\varepsilon| t}(x)\right ),
\end{equation}
where $ x=(x_1,x_2)\in \mathbb{R}^2$ and $ \bar{R}_{\alpha t}=\begin{pmatrix}
\cos\alpha t & \sin\alpha t \\
-\sin\alpha t &\cos\alpha t
\end{pmatrix} $. Taking \eqref{104} into \eqref{vor str eq}, we get
\begin{equation}\label{rot eq-1}
\begin{cases}
\nabla w\cdot \nabla^\perp \left( u-\frac{\alpha}{2}|x|^2|\ln\varepsilon|\right) =0,\\
w=\mathcal{L}_{K_H}u.
\end{cases}
\end{equation}
So formally if
\begin{equation}\label{105-1}
\mathcal{L}_{K_H} u=w=f_\varepsilon\left( u-\frac{\alpha}{2}|x|^2|\ln\varepsilon|\right) \ \ \text{in}\ \mathbb{R}^2
\end{equation}
for some function $ f_\varepsilon $, then \eqref{rot eq-1} automatically holds. By taking  $ f_\varepsilon(t)=\varepsilon^2e^t $ and using the Lyapunov-Schmidt reduction method, the authors proved the existence of solutions to \eqref{105-1} concentrating near several distinct points in the distributional sense as $ \varepsilon\to0 $.
Note that by the choice of  $ f_\varepsilon $, the support set of vorticity is still the whole plane. Recently, \cite{CW} considered
rotational-invariant  concentrated solutions with small cross-section to \eqref{vor str eq} in an infinite cylinder $ B_{R^*}(0)\times\mathbb{R} $. Similar to the deduction of \eqref{105-1}, it suffices to solve a semilinear elliptic equations in divergence form
\begin{equation}\label{rot eq2}
\begin{split}
-\text{div}(K(x)\nabla u)=f_\varepsilon\left(u-q |\ln\varepsilon|\right)\ \ \text{in}\  \Omega;\ \
u(x)=0\ \ \text{on}\  \partial \Omega
\end{split}
\end{equation}
for some function $ f_\varepsilon  $, where  $ K $ is a positive-definite matrix and the function $ q>0 $. Denote   $ \det K $ the determinant of $ K $. By choosing $ f_\varepsilon(t)=\frac{1}{\varepsilon^2}t^p_+ $ for $ p>1 $ and constructing asymptotic expansion of Green's function $ G_K $ of the elliptic operator $ -\text{div}\cdot(K(x)\nabla  ) $ being of the form
\begin{equation*}
G_K(x, y)=\frac{\sqrt{\det K(x)}^{-1}+\sqrt{\det K(y)}^{-1}}{2}\Gamma\left (\frac{T_x+T_y}{2}(x-y)\right )+S_K(x,y),
\end{equation*}
where $ \Gamma(x)=-\frac{1}{2\pi}\ln|x| $, $ (T_x)^{-1}(T_x)^{-t}=K(x) $ and $ S_K(x,y)\in C^{0,\gamma}(\Omega\times\Omega) $ for $ \gamma\in(0,1) $, \cite{CW} proved the following results:
\begin{customthm}{A}[\cite{CW}]\label{thm1A}
For any given $ l $ distinct strict local minimum (maximum) points $ x_{0,j}(j=1,\cdots, l)$ of $ q^2\sqrt{\det(K)} $ in $ \Omega $, there exists $ \varepsilon_0>0 $  such that for every $ \varepsilon\in(0,\varepsilon_0] $, \eqref{rot eq2} has a solution $ u_\varepsilon $ satisfying
\begin{enumerate}
	\item  Define $ \bar{A}_{\varepsilon,i}=\left\{u_\varepsilon>q\ln\frac{1}{\varepsilon}\right\}\cap B_{\bar{\rho}}(x_{0,i})  $, where $ \bar{\rho}>0 $ is small. Then there exist $ (z_{1,\varepsilon}, \cdots, z_{l,\varepsilon}) $ and $ R_1,R_2>0 $ independent of $ \varepsilon $ satisfying
	\begin{equation*}
	\lim_{\varepsilon\to 0}(z_{1,\varepsilon}, \cdots, z_{l,\varepsilon})=(x_{0,1}, \cdots, x_{0,l});\ \ 	 B_{R_1\varepsilon}(z_{i,\varepsilon})\subseteq \bar{A}_{\varepsilon,i}\subseteq B_{R_2\varepsilon}(z_{i,\varepsilon}).
	\end{equation*}	
	\item
	\begin{equation*}
	\lim_{\varepsilon\to 0}\frac{1}{\varepsilon^2}\int_{B_{\bar{\rho}}(x_{0,i})}\left( u_\varepsilon-q|\ln\varepsilon|\right)^{p}_+dx=2\pi q\sqrt{\det(K)}(x_{0,i}).
	\end{equation*}
\end{enumerate}
\end{customthm}
By choosing $ K=K_H $, $ q=\frac{\alpha}{2}|x|^2+\beta $ for some constants $ \alpha,\beta $ in Theorem \ref{thm1A}, the authors constructed  multiple traveling-rotating helical vortices  in $ B_{R^*}(0)\times \mathbb{R} $ with polygonal symmetry. Note that the concentrating locations $ x_{0,j} $ are $l $  distinct  points, which constitute the vertices of a regular polygon. More results can be seen in \cite{CW1, CW2}.

Existing results indicate that there exist concentrated helical  vortex solutions concentrating near several $ distinct $ helices in $D$. So here comes a natural question, are there helical vortex solutions to \eqref{Euler eq}, whose support sets consist of several helical tubes and collapse into a single helix as parameter $ \varepsilon\to 0 $? We call this kind of solutions the $ clustered $ helical solutions. From the deduction of \eqref{rot eq2}, the question becomes whether there exists a family of solutions to \eqref{rot eq2}, such that solutions consist of several bubbles which collapse into a single point as $ \varepsilon\to 0 $. Note that when $ K(x)\equiv Id $,  \eqref{rot eq2} becomes vorticity equations of  2D Euler equations. In this case, classical results (see \cite{CGPY,CPY}) indicate that limiting locations of concentrated solutions must be critical points of the Kirchhoff-Routh function, which are $ l $ distinct points in $\Omega$. So clustered solutions to 2D Euler equations do not exist. As for vortex rings to 3D Euler equations, \cite{ALW} constructed smooth clustered solutions shrinking to a circle. Very recently, by choosing proper $ f_\varepsilon $ in \eqref{105-1}, \cite{GM}   constructed  smooth clustered  solutions  to \eqref{105-1} shrinking to a single point in $ \mathbb{R}^2 $, which correspond to clustered helical solutions in $ \mathbb{R}^3 $. However, because of the choice of the vortex profile $ f(t)=e^t $, it is not sure that the support sets of  vortex solutions constructed in \cite{ALW,GM} are included in a vortex tube with small cross-section.

Our goal in this paper is to construct clustered helical  solutions to Euler equation \eqref{Euler eq} with small cross-section in helical domains, such that the support of vortices consists of several helical tubes   collapsing  into a single helix as $ \varepsilon\to 0 $. From the deduction of \eqref{rot eq2}, it suffices to construct clustered solutions to a semilinear elliptic equations in divergence form \eqref{rot eq2}.  We prove that, suppose that $ x_0 $ is a strict local maximizer of $ q^2\sqrt{\det K} $ in $\Omega$, then for any positive integer $ m $ there exists a family of clustered solutions concentrating near $ m $ points $ (z_{1,\varepsilon},z_{2,\varepsilon},\cdots,z_{m,\varepsilon}) $, which satisfy  $ \lim_{\varepsilon\to 0}z_{i,\varepsilon} =x_0 $ for $ i=1,\cdots,m $. The key of proof is to get $ C^1-$asymptotic expansion of Green's function $ G_K $ of the operator $ -\text{div}(K(x)\nabla  ) $ (see Lemma \ref{Green expansion2}), the $ C^1- $dependence of the error term $ \omega_{\delta,Z} $ with respect to $ Z $ (see Proposition \ref{differ of w}) and the existence of critical points of energy $ K_\delta(Z) $ (see Proposition \ref{order of main term}).
Therefore in our construction, solutions consist of several bubbles  concentrating near  a single point rather than $ m $ distinct points, which is quite different from known results in \cite{CW,CW1,DDMW}.

Now we begin to show our main results. Let us consider  clustered solutions to a semilinear elliptic equation in divergence form
\begin{equation}\label{eq1-1}
\begin{cases}
-\varepsilon^2\text{div}(K(x)\nabla u)= (u-q|\ln\varepsilon|)^{p}_+,\ \ &x\in \Omega,\\
u=0,\ \ &x\in\partial \Omega,
\end{cases}
\end{equation}
where $ \Omega\subset \mathbb{R}^2 $ is a smooth bounded domain, $ \varepsilon\in(0,1) $ and $ p>1 $. $ K=(K_{i,j})_{2\times2} $ is a positive-definite smooth matrix satisfying
\begin{enumerate}
	\item[($\mathcal{K}$1).] $ -\text{div}(K(x)\nabla \cdot) $ is a uniformly elliptic operator, that is, there exist  $ \Lambda_1,\Lambda_2>0 $ such that $$ \Lambda_1|\zeta|^2\le (K(x)\zeta|\zeta) \le \Lambda_2|\zeta|^2,\ \ \ \ \forall\ x\in \Omega, \ \zeta\in \mathbb{R}^2.$$
\end{enumerate}
$ q  $ is a function defined in $ \overline{\Omega} $ satisfying
\begin{enumerate}
	\item[(Q1).]  $ q \in C^{\infty}(\overline{\Omega}) $ and $ q(x)>0 $ for any $ x\in\overline{\Omega}. $
\end{enumerate}
Our first result is as follows.
\begin{theorem}\label{thm1}
Suppose that ($\mathcal{K}$1) and (Q1) hold. Let $ x_0 $ be a strict local maximum point of $ q^2\sqrt{\det(K)} $ in $ \Omega $, i.e., there exists $ \bar{\rho}>0 $ small such that
\begin{equation*}
q^2\sqrt{\det(K)}(y)< q^2\sqrt{\det(K)}(x_{0})\ \ \ \ \forall y\in B_{\bar{\rho}}(x_{0})\backslash\{x_{0}\}.
\end{equation*} Then, for any $ m\in \mathbb{N}^* $  there exists $ \varepsilon_0>0 $, such that for every $ \varepsilon\in(0,\varepsilon_0] $, \eqref{eq1-1} has a family of clustered solutions $ u_\varepsilon $ with
\begin{equation*}
\frac{1}{\varepsilon^2}\int_{\Omega}\left( u_\varepsilon-q|\ln\varepsilon|\right)^{p}_+dx\to 2\pi m q\sqrt{\det K}(x_{0})  \ \ \text{as}\ \varepsilon\to0.
\end{equation*}
Moreover, there exist $ (z_{1,\varepsilon}, \cdots, z_{m,\varepsilon})\in \Omega^{(m)} $ such that
\begin{equation*}
|z_{i,\varepsilon}-z_{j,\varepsilon}|\geq |\ln\varepsilon|^{-m^2-1},\ \ \forall i\neq j; \ \ \left \{u_\varepsilon>q|\ln\varepsilon|\right \}\subseteq \cup_{i=1}^m B_{|\ln\varepsilon|^{-m^2-2}}(z_{i,\varepsilon})
\end{equation*}
and
\begin{equation*}
\lim_{\varepsilon\to 0}(z_{1,\varepsilon}, \cdots, z_{m,\varepsilon})=(x_{0}, \cdots, x_{0}).
\end{equation*}	
Define the  set $ A_{\varepsilon,i}=\left\{u_\varepsilon>q|\ln\varepsilon|\right\}\cap B_{|\ln\varepsilon|^{-m^2-2}}(z_{i,\varepsilon})  $. Then there exist constants $ R_1,R_2>0 $ independent of $ \varepsilon $  such that
\begin{equation*}
B_{R_1\varepsilon}(z_{i,\varepsilon})\subseteq A_{\varepsilon,i}\subseteq B_{R_2\varepsilon}(z_{i,\varepsilon}).
\end{equation*}
\end{theorem}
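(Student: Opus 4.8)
The plan is to construct $u_\varepsilon$ by a finite-dimensional Lyapunov--Schmidt reduction whose parameters are the $m$ bubble centres $Z=(z_1,\dots,z_m)$. First I would fix the profile of a single bubble. Freezing the coefficient matrix at a point $z$ near $x_0$ and diagonalising $K(z)$ by a linear change of variables, the limiting rescaled equation is $-\Delta\tilde V=\tilde V_+^{p}$ in $\mathbb{R}^2$; it admits a unique radially decreasing solution whose positive set $\{\tilde V>0\}$ is a bounded ball and which is nondegenerate modulo translations. Transplanting this profile to $\Omega$ at scale $\varepsilon$ around $z$, adding the constant $q(z)|\ln\varepsilon|$, and projecting onto $H_0^1(\Omega)$ so that the far field of the bubble is governed by the Green's function $G_K(\cdot,z)$ via the expansion in Lemma \ref{Green expansion2}, produces an approximate bubble $PU_{\varepsilon,z}$; matching the logarithmic far field to $q(z)|\ln\varepsilon|$ forces the bubble mass $\int(PU_{\varepsilon,z}-q|\ln\varepsilon|)_+^{p}\sim 2\pi q\sqrt{\det K}(z)\,\varepsilon^2$ and forces its superlevel set $\{PU_{\varepsilon,z}>q|\ln\varepsilon|\}$ to be a disc of radius comparable to $\varepsilon$ around $z$. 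The global ansatz is $W_{\varepsilon,Z}=\sum_{i=1}^m PU_{\varepsilon,z_i}$, and I would take the admissible set of parameters to be the compact set
\[
\mathcal{D}_\varepsilon=\Big\{Z=(z_1,\dots,z_m):\ z_i\in\overline{B_{\bar{\rho}}(x_0)},\ \ |z_i-z_j|\ge|\ln\varepsilon|^{-m^2-1}\ \ \forall\,i\ne j\Big\},
\]
on which distinct bubbles are separated at a scale tiny compared with $\bar{\rho}$ but still enormously larger than their width $\varepsilon$.

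Next I would develop the linear theory. Linearising $u\mapsto-\varepsilon^2\text{div}(K\nabla u)-(u-q|\ln\varepsilon|)_+^{p}$ at $W_{\varepsilon,Z}$ gives an operator $L_{\varepsilon,Z}$ whose approximate kernel is spanned by the $2m$ translation modes $\partial_{z_i}PU_{\varepsilon,z_i}$. Using the nondegeneracy of the limiting profile together with the fact that the coupling between distinct bubbles is small (since $|\ln\varepsilon|^{-m^2-1}\gg\varepsilon$), I would prove that $L_{\varepsilon,Z}$ is invertible, with norm bounded uniformly for $Z\in\mathcal{D}_\varepsilon$, on the subspace $L^2$-orthogonal to that approximate kernel. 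A contraction mapping argument in a suitable weighted norm then solves the projected nonlinear problem and yields, for each $Z\in\mathcal{D}_\varepsilon$, an error term $\omega_{\varepsilon,Z}$ (small, controlled by the error of $W_{\varepsilon,Z}$) together with Lagrange multipliers, so that $W_{\varepsilon,Z}+\omega_{\varepsilon,Z}$ solves \eqref{eq1-1} modulo the approximate kernel. The refinement that makes the variational step possible is that $\omega_{\varepsilon,Z}$ depends on $Z$ in a $C^1$ manner, with quantitative control of $\partial_Z\omega_{\varepsilon,Z}$; this is Proposition \ref{differ of w}, and it uses the $C^1$ (not merely $C^0$) expansion of $G_K$ in Lemma \ref{Green expansion2}. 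One then checks that \eqref{eq1-1} is genuinely solved precisely when all Lagrange multipliers vanish.

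The heart of the argument is the reduced problem. Substituting $W_{\varepsilon,Z}+\omega_{\varepsilon,Z}$ into the energy
\[
I_\varepsilon(u)=\frac{\varepsilon^2}{2}\int_\Omega\big(K(x)\nabla u\cdot\nabla u\big)\,dx-\frac{1}{p+1}\int_\Omega(u-q|\ln\varepsilon|)_+^{p+1}\,dx
\]
defines (up to normalisation) the reduced energy $K_\varepsilon(Z)$, a $C^1$ function on $\mathcal{D}_\varepsilon$ whose interior critical points correspond to solutions of \eqref{eq1-1}. Using Lemma \ref{Green expansion2} and the bounds on $\omega_{\varepsilon,Z}$, its expansion takes the form
\[
K_\varepsilon(Z)=\pi\varepsilon^2|\ln\varepsilon|\sum_{i=1}^m q^2\sqrt{\det K}(z_i)+2\pi\,\varepsilon^2\, q^2\sqrt{\det K}(x_0)\sum_{1\le i<j\le m}\ln|z_i-z_j|+O(\varepsilon^2),
\]
where the leading $O(\varepsilon^2|\ln\varepsilon|)$ term is a confining term which, by the strict-maximum hypothesis, is $\le$ its value at $Z=(x_0,\dots,x_0)$ with a strict loss as soon as some $z_i$ leaves a fixed neighbourhood of $x_0$, while the $O(\varepsilon^2)$ interaction term --- arising, through the Green's expansion, from the mutual influence of the bubbles on one another's masses --- tends to $-\infty$ as any two centres coalesce (the remaining $O(\varepsilon^2)$ contains $Z$-independent self-energies and lower-order Robin-type corrections). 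Proposition \ref{order of main term} then shows that $K_\varepsilon$, maximised over the compact set $\mathcal{D}_\varepsilon$, attains its maximum at an interior point $Z_\varepsilon$: on the part of $\partial\mathcal{D}_\varepsilon$ where some $z_i\in\partial B_{\bar{\rho}}(x_0)$ the confining term is smaller by an amount $\gtrsim\varepsilon^2|\ln\varepsilon|$, and on the part where some $|z_i-z_j|=|\ln\varepsilon|^{-m^2-1}$ the interaction term is extremely negative, so on either boundary piece $K_\varepsilon$ lies strictly below its value at a good interior test configuration (a weighted Fekete-type configuration of diameter $\sim|\ln\varepsilon|^{-1/2}$ centred at $x_0$). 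The balance of the two terms forces $Z_\varepsilon\to(x_0,\dots,x_0)$ with mutual separations of order $|\ln\varepsilon|^{-1/2}$, comfortably above $|\ln\varepsilon|^{-m^2-1}$. Then $u_\varepsilon=W_{\varepsilon,Z_\varepsilon}+\omega_{\varepsilon,Z_\varepsilon}$ is a solution of \eqref{eq1-1}, and from the explicit structure of $W_{\varepsilon,Z_\varepsilon}$ together with the smallness of $\omega_{\varepsilon,Z_\varepsilon}$ one reads off $z_{i,\varepsilon}\to x_0$, the separation bound $|z_{i,\varepsilon}-z_{j,\varepsilon}|\ge|\ln\varepsilon|^{-m^2-1}$, the inclusion $\{u_\varepsilon>q|\ln\varepsilon|\}\subseteq\cup_{i=1}^m B_{|\ln\varepsilon|^{-m^2-2}}(z_{i,\varepsilon})$, the two-sided bounds $B_{R_1\varepsilon}(z_{i,\varepsilon})\subseteq A_{\varepsilon,i}\subseteq B_{R_2\varepsilon}(z_{i,\varepsilon})$, and the mass limit $\frac1{\varepsilon^2}\int_\Omega(u_\varepsilon-q|\ln\varepsilon|)_+^{p}\,dx\to 2\pi m\,q\sqrt{\det K}(x_0)$.

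The main obstacle is the interplay of the linear theory with this reduced-energy analysis. Unlike the single-bubble case of Theorem \ref{thm1A}, the terms of $K_\varepsilon$ that actually select the concentration point $x_0$ and the clustering rate lie one full order below the leading term and are of the same size as --- or smaller than --- the errors produced by the reduction, so every expansion must be carried further and, crucially, differentiated in $Z$ with quantitative remainders; this is exactly why the $C^1$ expansion of $G_K$ (Lemma \ref{Green expansion2}) and the $C^1$ dependence of $\omega_{\varepsilon,Z}$ (Proposition \ref{differ of w}) are indispensable. A secondary difficulty is keeping $L_{\varepsilon,Z}$ uniformly invertible while the bubbles are allowed to approach one another; this is handled by exploiting that the admissible separation $|\ln\varepsilon|^{-m^2-1}$, though tending to $0$, is incomparably larger than the bubble width $\varepsilon$, so that the inter-bubble coupling is a genuinely lower-order perturbation of $m$ decoupled single-bubble linearised operators.
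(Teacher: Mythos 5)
Your proposal is correct and follows essentially the same route as the paper: the same rescaled ansatz with bubble profiles frozen at $K(z_i)$ and corrected by a Green's-function projection, the same admissible set with separation $|\ln\varepsilon|^{-m^2-1}$, the same nondegeneracy-based linear theory and contraction argument, the same $C^1$-in-$Z$ reduction resting on Lemma \ref{Green expansion2} and Proposition \ref{differ of w}, and the same maximization of the reduced energy over $\overline{\Lambda_{\varepsilon, m}}$ against a regular-polygon test configuration of radius $|\ln\varepsilon|^{-1/2}$, ruling out both boundary pieces exactly as in Lemma \ref{finite dimen solu}. The only (harmless) over-claim is that the maximizer's mutual separations are of order $|\ln\varepsilon|^{-1/2}$; the paper, and the theorem, only establish the lower bound $|\ln\varepsilon|^{-m^2-1}$.
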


\begin{remark}
It is quite surprising that accumulation of bubbles can occur for system \eqref{eq1-1}. When $ K\equiv Id $, this phenomenon does not exist, see \cite{CGPY}. The only known result for such phenomena is due to \cite{GM}. Note that the construction of clustered solutions in \cite{GM} depends on the choice of $ f_\varepsilon $ and the accurate expression of $ K_H $ in \eqref{coef matrix}. In contrast to \cite{GM}, we get clustered solutions to equations \eqref{eq1-1} with any positive-definite matrix $ K $ by using $ C^1 $-asymptotic estimates of Green's function $ G_K $. Another interesting phenomenon  is the multiplicity of solutions to \eqref{eq1-1}. Indeed, Theorem \ref{thm1} shows that there exists solutions of \eqref{eq1-1} with arbitrarily
many bubbles   at given local maximum points of $ q^2\sqrt{\det K} $. So the quantity
\begin{equation*}
\frac{1}{\varepsilon^2}\int_{\Omega}\left( u_\varepsilon-q|\ln\varepsilon|\right)^{p}_+dx
\end{equation*}
can tend to $ +\infty $ as $ \varepsilon\to0 $. These results show a striking difference with the classical results in \cite{CW,CW1,DDMW2}.
\end{remark}

\begin{remark}
In \cite{WYZ}, the authors considered an anisotropic Emden-Fowler equation
\begin{equation*}
\begin{cases}
\text{div}(a(x)\nabla u)+\varepsilon^2a(x)e^u=0& \text{in}\ \Omega,\\
 u=0& \text{on}\ \partial\Omega,
 \end{cases}
\end{equation*}
where $ a $ is a smooth positive function in $ \Omega $. For any given maximum points $ x^* $ of $ a $,  the authors constructed clustered 
solutions concentrating near $ x^*  $. Note that when choosing $ K(x)=a(x)Id $ in \eqref{eq1-1},   results  in Theorem \ref{thm1} 
coincides with those in \cite{WYZ}. 
\end{remark}
Our strategy of proof for Theorem \ref{thm1} is as follows. Set $ \delta=\varepsilon|\ln\varepsilon|^{-\frac{p-1}{2}} $ and $ u=|\ln\varepsilon|v $, then \eqref{eq1-1} becomes
\begin{equation}\label{111}
\begin{cases}
-\delta^2\text{div}(K(x)\nabla v)=  (v-q)^{p}_+,\ \ &x\in \Omega,\\
v=0,\ \ &x\in\partial \Omega.
\end{cases}
\end{equation}
To get solutions of \eqref{111}, we first give a $ C^1 $-expansion of Green's function for the elliptic operator $ -\text{div}(K(x)\nabla) $, see Lemma \ref{Green expansion2}. Then we construct approximate solutions   $ \sum_{j=1}^m(V_{\delta, z_j, \hat{q}_j}+H_{\delta, z_j, \hat{q}_j})+\omega_{\delta,Z} $, where $ V_{\delta, z_j, \hat{q}_j} $, $ H_{\delta, z_j, \hat{q}_j} $ and $ \omega_{\delta,Z}$ are the main term,  projection term and  error term respectively with $Z=(z_1,\cdots,z_m)$. Note that the admissible class $ \Lambda_{\varepsilon, m} $ must be chosen properly. Under the choice of $ H_{\delta, z_j, \hat{q}_j} $, we get the  equation \eqref{3-03} for $ \omega_\delta $. We also choose  $ \hat{q}_j $ properly  to ensure that $ V_{\delta,Z}-q $ is  close to  $V_{\delta, z_i, \hat{q}_{\delta,i}}-\hat{q}_{\delta,i} $, see \eqref{203}.   Using the non-degeneracy of solutions to \eqref{eq5}, we get the existence and uniqueness of $ \omega_\delta $, see Lemma \ref{coercive esti} and Proposition \ref{exist and uniq of w}. To prove that the energy functional is $ C^1 $ with respect to the variable $ Z $, we need to prove the differentiability of $ \omega_{\delta,Z}  $ about $ Z $, which is shown in Proposition \ref{differ of w}. Finally it suffices to solve a finite dimension problem. By calculating the main term of the energy $ K_\delta(Z) $ and choosing test functions as  the vertices of a $ m $–sided regular polygon,  we get the existence of critical points of $  K_\delta(Z)  $, which correspond to a family of clustered solutions to \eqref{111}.

As an application of Theorem \ref{thm1}, for any given $R^*>0$, we get clustered helical  rotational-invariant solutions with small cross-section to Euler equations \eqref{Euler eq} in the infinite cylinder $ B_{R^*}(0)\times\mathbb{R} $.  Let $ \alpha\in\mathbb{R} $.  We look for rotating-invariant solution pairs $ (\omega,\varphi) $ to \eqref{vor str eq} with angular velocity $ \alpha|\ln\varepsilon| $, that is, $ (\omega,\varphi) $ satisfies  \eqref{104}. From the deduction of \eqref{105-1}, if $ u $ solves
\begin{equation*}\label{rot eq3}
\begin{cases}
-\text{div}\cdot(K_H(x)\nabla u)=f_\varepsilon\left(u-\frac{\alpha}{2}|x|^2 |\ln\varepsilon|\right),\  &x\in B_{R^*}(0),\\
u(x)=0,\ &x\in \partial B_{R^*}(0)
\end{cases}
\end{equation*}
for some function $ f_\varepsilon  $, then $ \omega(x,t)=w\left (\bar{R}_{-\alpha|\ln\varepsilon| t}(x)\right )$ and  $ \varphi(x,t)=u\left (\bar{R}_{-\alpha|\ln\varepsilon| t}(x)\right )  $ satisfy vorticity equations \eqref{vor str eq}, which corresponds helical solutions to Euler equations \eqref{Euler eq}. By choosing $ f_\varepsilon(t)=\frac{1}{\varepsilon^2}(t-\beta|\ln\varepsilon|)^p_+ $ for some $ \beta\in\mathbb{R} $, we get
\begin{theorem}\label{thm2}
Let $ R^*$ and $k$ be two given positive numbers. Suppose that $\alpha, \beta\in \mathbb{R}  $  are two numbers such that  $ \min_{x\in B_{R^*}(0)}\left( \frac{\alpha|x|^2}{2}+\beta\right)>0  $ and that $ \left( \frac{\alpha|x|^2}{2}+\beta\right)^2\sqrt{\det K_H} $ has a strict local maximum point $ x_0\in B_{R^*}(0)$ up to a rotation, i.e.,   $ |x_0| $ is a strict local maximum  point of $ \left( \frac{\alpha|x|^2}{2}+\beta\right)^2\sqrt{\det K_H} $ in $ [0,{R^*}) $.   Then for any $ m\in \mathbb{N}^* $  there exists $ \varepsilon_0>0 $ such that for $ \varepsilon\in(0,\varepsilon_0] $, \eqref{Euler eq} has a family of clustered helical Euler flows $ (\mathbf{v}_\varepsilon, P_\varepsilon)(x,t)\in C^1(B_{R^*}(0)\times \mathbb{R}) $. Moreover, the associated vorticity-stream function pair $ (\omega_\varepsilon,\varphi_\varepsilon) $ is a  rotational-invariant solution to \eqref{vor str eq} with the following properties:
	\begin{enumerate}
		\item The angular velocity is $ \alpha|\ln\varepsilon| $ and the circulations satisfy as $\varepsilon\to 0$
		\begin{equation*}
	\int_{B_{R^*}(0)} \omega_\varepsilon dx\to \frac{\pi km(\alpha|x_0|^2+2\beta
		)}{\sqrt{k^2+|x_0|^2}}.
		\end{equation*}
		\item There exist $ (z_{1,\varepsilon}, \cdots, z_{m,\varepsilon})\in B_{R^*}(0)^{(m)} $ such that
		\begin{equation*}
		|z_{i,\varepsilon}-z_{j,\varepsilon}|\geq |\ln\varepsilon|^{-m^2-1},\ \ \forall i\neq j; \ \ \text{supp}(\omega_\varepsilon)\subseteq \cup_{i=1}^m B_{|\ln\varepsilon|^{-m^2-2}}(z_{i,\varepsilon})
		\end{equation*}
		and
		\begin{equation*}
		\lim_{\varepsilon\to 0}(z_{1,\varepsilon}, \cdots, z_{m,\varepsilon})=(x_{0}, \cdots, x_{0}).
		\end{equation*}	\\
		\item  There exist constants $ R_1,R_2>0 $ independent of $ \varepsilon $  such that
		\begin{equation*}
		B_{R_1\varepsilon}(z_{i,\varepsilon})\subseteq \text{supp}(\omega_\varepsilon)\cap B_{|\ln\varepsilon|^{-m^2-2}}(z_{i,\varepsilon})\subseteq B_{R_2\varepsilon}(z_{i,\varepsilon}).
		\end{equation*}
	\end{enumerate}
\end{theorem}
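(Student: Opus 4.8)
The strategy is to derive Theorem~\ref{thm2} from Theorem~\ref{thm1}, applied with $\Omega=B_{R^*}(0)$, $K=K_H$ the helical coefficient matrix \eqref{coef matrix}, $q(x)=\tfrac{\alpha}{2}|x|^2+\beta$ and $f_\varepsilon(t)=\tfrac1{\varepsilon^2}(t-\beta|\ln\varepsilon|)^p_+$, and then to lift the resulting planar solution to a genuine $C^1$ helical Euler flow via the reduction \eqref{104}--\eqref{rot eq-1}. First I would verify the hypotheses of Theorem~\ref{thm1}. On the bounded domain $B_{R^*}(0)$ the matrix $K_H$ is smooth, symmetric and positive-definite; a direct computation shows its eigenvalues are $1$ and $\tfrac{k^2}{k^2+|x|^2}$, both lying in $[\tfrac{k^2}{k^2+(R^*)^2},1]$, so $(\mathcal K1)$ holds, and (Q1) is exactly the assumption $\min_{\overline{B_{R^*}(0)}}(\tfrac{\alpha}{2}|x|^2+\beta)>0$. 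A one-line determinant computation gives $\det K_H(x)=\tfrac{k^2}{k^2+|x|^2}$, hence $\sqrt{\det K_H}(x)=\tfrac{k}{\sqrt{k^2+|x|^2}}$, so that $q^2\sqrt{\det K_H}$ is a radial function whose radial profile, by hypothesis, has a strict local maximum at $r=|x_0|$; in particular $\nabla\big(q^2\sqrt{\det K_H}\big)(x_0)=0$ and $q^2\sqrt{\det K_H}$ has a (non-strict) local maximum all along the circle $\{|x|=|x_0|\}$.

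This radial degeneracy is the one place where Theorem~\ref{thm1} is not quoted verbatim, since its hypothesis demands an isolated strict local maximum. I would resolve it in either of two equivalent ways. Since $K_H$, $q$ and $B_{R^*}(0)$ are all invariant under rotations about the origin, one may run the construction of Theorem~\ref{thm1} near an arbitrary point of the circle $\{|x|=|x_0|\}$ and then apply a rotation carrying the resulting cluster into a neighbourhood of the prescribed $x_0$ --- this is precisely what ``up to a rotation'' in the statement refers to. Alternatively one checks that the finite-dimensional reduction underlying Theorem~\ref{thm1} --- maximising the reduced energy $K_\delta(Z)$ over the admissible $m$-bubble configurations whose centroid stays in $\overline{B_{\bar{\rho}}(x_0)}$ --- still yields an interior maximiser: the strict radial decay of $q^2\sqrt{\det K_H}$ away from $|x|=|x_0|$ controls the transversal direction and forces the centroid onto the arc $\{|x|=|x_0|\}\cap B_{\bar{\rho}}(x_0)$, whose interior contains $x_0$, while the inter-bubble interaction continues to select the regular $m$-gon shape.

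Granting this, let $u_\varepsilon$ be the solution of \eqref{eq1-1} produced by Theorem~\ref{thm1}; because $f_\varepsilon\big(u_\varepsilon-\tfrac{\alpha}{2}|x|^2|\ln\varepsilon|\big)=\tfrac1{\varepsilon^2}(u_\varepsilon-q|\ln\varepsilon|)^p_+$ it solves $-\text{div}(K_H\nabla u)=f_\varepsilon(u-\tfrac{\alpha}{2}|x|^2|\ln\varepsilon|)$ in $B_{R^*}(0)$. Set $w_\varepsilon:=\mathcal L_{K_H}u_\varepsilon=\tfrac1{\varepsilon^2}(u_\varepsilon-q|\ln\varepsilon|)^p_+$ and, following \eqref{104}, define $\omega_\varepsilon(x,t):=w_\varepsilon(\bar{R}_{-\alpha|\ln\varepsilon|t}x)$ and $\varphi_\varepsilon(x,t):=u_\varepsilon(\bar{R}_{-\alpha|\ln\varepsilon|t}x)$. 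Since $w_\varepsilon$ is a function of $u_\varepsilon-\tfrac{\alpha}{2}|x|^2|\ln\varepsilon|$, the pointwise identity $\nabla w_\varepsilon\cdot\nabla^\perp\!\big(u_\varepsilon-\tfrac{\alpha}{2}|x|^2|\ln\varepsilon|\big)=0$ holds, which is exactly \eqref{rot eq-1}, so $(\omega_\varepsilon,\varphi_\varepsilon)$ solves the $2$D vorticity system \eqref{vor str eq} with angular velocity $\alpha|\ln\varepsilon|$. As $p>1$, the map $t\mapsto t^p_+$ is $C^1$ with locally Lipschitz derivative, so $(u_\varepsilon-q|\ln\varepsilon|)^p_+$ is Lipschitz and a standard elliptic bootstrap gives $u_\varepsilon\in C^2(\overline{B_{R^*}(0)})$. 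I would then invoke the equivalence established in \cite{CW2,ET}: under the helical symmetry \eqref{helical func} and the non-swirl condition \eqref{ortho}, a solution $(\omega_\varepsilon,\varphi_\varepsilon)$ of \eqref{vor str eq} determines, through \eqref{w formula} and the stream-function reconstruction, a helical pair $(\mathbf v_\varepsilon,P_\varepsilon)$ solving \eqref{Euler eq}, with $\mathbf v_\varepsilon$ built from $\varphi_\varepsilon$ and $\nabla\varphi_\varepsilon$ and $P_\varepsilon$ recovered from a Bernoulli-type relation, whence $\mathbf v_\varepsilon,P_\varepsilon\in C^1(B_{R^*}(0)\times\mathbb R)$; a rotation by $\alpha|\ln\varepsilon|t$ in the cross-section corresponds through the helical group $\mathcal G_k$ to a simultaneous translation by $k\alpha|\ln\varepsilon|t$ along the $x_3$-axis, so the flow is traveling-rotating.

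It remains to transfer the three listed properties from Theorem~\ref{thm1}. For the circulation, $\int_{B_{R^*}(0)}\omega_\varepsilon\,dx=\int_{B_{R^*}(0)}w_\varepsilon\,dx=\tfrac1{\varepsilon^2}\int_{B_{R^*}(0)}(u_\varepsilon-q|\ln\varepsilon|)^p_+\,dx\to 2\pi m\,q(x_0)\sqrt{\det K_H}(x_0)=\tfrac{2\pi m k(\frac{\alpha}{2}|x_0|^2+\beta)}{\sqrt{k^2+|x_0|^2}}=\tfrac{\pi m k(\alpha|x_0|^2+2\beta)}{\sqrt{k^2+|x_0|^2}}$, which is~(1). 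For (2) and (3), at each time $t$ one has $\text{supp}(\omega_\varepsilon(\cdot,t))=\bar{R}_{-\alpha|\ln\varepsilon|t}\,\overline{\{u_\varepsilon>q|\ln\varepsilon|\}}$, so the separation $|z_{i,\varepsilon}-z_{j,\varepsilon}|\ge|\ln\varepsilon|^{-m^2-1}$, the inclusion $\{u_\varepsilon>q|\ln\varepsilon|\}\subseteq\cup_i B_{|\ln\varepsilon|^{-m^2-2}}(z_{i,\varepsilon})$, the convergence $z_{i,\varepsilon}\to x_0$, and the two-sided bounds $B_{R_1\varepsilon}(z_{i,\varepsilon})\subseteq A_{\varepsilon,i}\subseteq B_{R_2\varepsilon}(z_{i,\varepsilon})$ all carry over verbatim, after applying the same time-dependent rotation to the centres $z_{i,\varepsilon}$. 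The main obstacle is the radially degenerate maximum discussed in the second paragraph --- ensuring that the reduction scheme of Theorem~\ref{thm1} still locates a cluster concentrating exactly at (a rotate of) $x_0$; the remainder is routine bookkeeping together with the standard helical-to-$2$D correspondence.
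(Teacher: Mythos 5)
Your proposal is correct and follows essentially the same route as the paper: reduce to the planar problem \eqref{eq02} with $K=K_H$, $q=\tfrac{\alpha}{2}|x|^2+\beta$, observe that the radial degeneracy of $q^2\sqrt{\det K_H}$ prevents a verbatim application of Theorem \ref{thm1}, rerun the finite-dimensional reduction (Lemma \ref{coercive esti}, Proposition \ref{exist and uniq of w}) and locate a maximizer of $K_\delta$ using the rotational symmetry, then lift via \eqref{104}--\eqref{rot eq-1}; your computations of the eigenvalues of $K_H$, of $\sqrt{\det K_H}=k/\sqrt{k^2+|x|^2}$, and of the limiting circulation all agree with the paper.

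One clarification on the only delicate point. You present the rotation argument and the interior-maximizer argument as ``two equivalent ways,'' but neither works alone: you cannot ``run the construction of Theorem \ref{thm1} near an arbitrary point of the circle $\{|x|=|x_0|\}$,'' since the strict-local-maximum hypothesis fails at \emph{every} such point; and the radial strictness by itself does not prevent a maximizer of $K_\delta$ over $\overline{\Lambda_{\varepsilon,m}}$ from drifting angularly to $\partial B_{\bar\rho}(x_0)$ along the circle, where the boundary-exclusion step of Lemma \ref{finite dimen solu} breaks down. The two ingredients must be combined: the radial strict decay rules out radial escape, and the invariance of the \emph{entire} reduced energy under simultaneous rotation of $(z_1,\dots,z_m)$ --- which is the content of the paper's expansion \eqref{501}, where the remainder $\tilde N_\delta(Z)$ (not just the explicit main terms) is shown to be rotation-invariant --- lets one rotate a boundary maximizer back into the interior near $x_0$. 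Verifying that the perturbation term itself is rotation-invariant is the one substantive check your sketch omits; everything else is as in the paper.
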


A consequence
of Theorem \ref{thm2} is the existence of rotational-invariant clustered helical vorticity solutions to 3D incompressible Euler equation in infinite cylinders, whose support sets consist of several helical tubes and collapse into  $ x_3- $axis as parameter $ \varepsilon\to 0 $.
\begin{corollary}\label{cor3}
Let $ R^*$ and $k$ be two given positive numbers. Let  $ \alpha$ and  $\beta$ be constants such that $ \alpha\leq 0 $ and $ \min_{x\in B_{R^*}(0)}\left( \frac{\alpha|x|^2}{2}+\beta\right)>0 $. Then for any $ m\in \mathbb{N}^* $  there exists $ \varepsilon_0>0 $ such that for $ \varepsilon\in(0,\varepsilon_0] $, \eqref{Euler eq} has a family of clustered helical Euler flows $ (\mathbf{v}_\varepsilon, P_\varepsilon)(x,t)\in C^1(B_{R^*}(0)\times \mathbb{R}) $. Moreover, the associated vorticity field $ \mathbf{w}_\varepsilon=\nabla\times \mathbf{v}_\varepsilon  $ is a  rotational-invariant solution to \eqref{Euler eq2} with angular velocity $ \alpha|\ln\varepsilon| $ whose support set consists of $ m $ helical tubes with pitch $ k $ and collapses into  $ x_3- $axis as  $ \varepsilon\to 0 $, and the circulation of $ \mathbf{w}_\varepsilon $ tends to $ 2\pi m\beta $ as $ \varepsilon\to0 $. 
\end{corollary}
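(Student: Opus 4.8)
The plan is to obtain Corollary \ref{cor3} as a direct specialization of Theorem \ref{thm2}, so the only substantive task is to verify that, under the hypotheses $\alpha\le 0$ and $\min_{\overline{B_{R^*}(0)}}\big(\tfrac{\alpha|x|^2}{2}+\beta\big)>0$, all assumptions of Theorem \ref{thm2} hold for the concrete choice $q(x)=\tfrac{\alpha|x|^2}{2}+\beta$, and in particular that the strict local maximum point of $\big(\tfrac{\alpha|x|^2}{2}+\beta\big)^2\sqrt{\det K_H}$ is the origin. First I would compute $\det K_H$ explicitly from \eqref{coef matrix}:
\begin{equation*}
\det K_H(x)=\frac{(k^2+x_2^2)(k^2+x_1^2)-x_1^2x_2^2}{(k^2+x_1^2+x_2^2)^2}=\frac{k^2}{k^2+|x|^2},
\end{equation*}
so $\sqrt{\det K_H}=k\,(k^2+|x|^2)^{-1/2}$ is a positive smooth radial function on $\overline{B_{R^*}(0)}$; since $K_H$ is smooth and positive-definite on this compact set it satisfies $(\mathcal{K}1)$ there, and since $q=\tfrac{\alpha|x|^2}{2}+\beta$ is a positive polynomial it satisfies $(Q1)$.

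Next I would locate the maximum point. Writing $s=|x|^2$ and $g(s)=k\big(\tfrac{\alpha}{2}s+\beta\big)^2(k^2+s)^{-1/2}$ on $[0,(R^*)^2]$, the positivity hypothesis together with $\alpha\le 0$ forces $\tfrac{\alpha}{2}s+\beta>0$ throughout and $\beta>0$. A one-line differentiation shows that $g'(s)$ is the product of the manifestly positive factor $k\big(\tfrac{\alpha}{2}s+\beta\big)(k^2+s)^{-3/2}$ with $\big(\alpha k^2+\tfrac{3\alpha}{4}s-\tfrac{\beta}{2}\big)$, and the latter is strictly negative when $\alpha\le 0$ and $\beta>0$. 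Hence $g$ is strictly decreasing, so $s=0$, i.e.\ $|x_0|=0$ and $x_0=0$, is the unique strict local maximum point of $\big(\tfrac{\alpha|x|^2}{2}+\beta\big)^2\sqrt{\det K_H}$ in $[0,R^*)$; this is precisely the hypothesis of Theorem \ref{thm2}, and ``up to a rotation'' is automatic because the function is radial and the maximizing radius is $0$.

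With this in hand I would invoke Theorem \ref{thm2} at $x_0=0$. It yields, for every $m$ and all small $\varepsilon$, a clustered helical flow $(\mathbf v_\varepsilon,P_\varepsilon)\in C^1(B_{R^*}(0)\times\mathbb R)$ with angular velocity $\alpha|\ln\varepsilon|$, and evaluating its circulation formula at $x_0=0$ gives
\begin{equation*}
\int_{B_{R^*}(0)}\omega_\varepsilon\,dx\longrightarrow\frac{\pi k m(\alpha|x_0|^2+2\beta)}{\sqrt{k^2+|x_0|^2}}=\frac{2\pi k m\beta}{k}=2\pi m\beta\qquad (x_0=0);
\end{equation*}
since the third component of $\mathbf w_\varepsilon=\tfrac{\omega_\varepsilon}{k}\overrightarrow{\zeta}$ restricted to the cross-section $\{x_3=0\}$ equals $\omega_\varepsilon$ by \eqref{w formula}, this limit is the circulation of $\mathbf w_\varepsilon=\nabla\times\mathbf v_\varepsilon$. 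The localization statements of Theorem \ref{thm2} also provide points $z_{i,\varepsilon}\to 0$, pairwise separated by at least $|\ln\varepsilon|^{-m^2-1}$, with $\mathrm{supp}(\omega_\varepsilon(\cdot,t))$ contained in the union of $m$ disks $B_{|\ln\varepsilon|^{-m^2-2}}(z_{i,\varepsilon})$, each component being squeezed between $B_{R_1\varepsilon}(z_{i,\varepsilon})$ and $B_{R_2\varepsilon}(z_{i,\varepsilon})$.

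Finally I would transcribe this into $\mathbb R^3$: by \eqref{w formula} we have $\mathrm{supp}(\mathbf w_\varepsilon(\cdot,t))=\cup_{\rho\in\mathbb R}H_\rho\big(\mathrm{supp}(\omega_\varepsilon(\cdot,t))\times\{0\}\big)$, which is a disjoint union of $m$ tubular neighborhoods of the helices $\rho\mapsto H_\rho(z_{i,\varepsilon},0)$; each such helix has pitch $k$, and since $z_{i,\varepsilon}\to 0$ while $H_\rho(0,0,0)=(0,0,k\rho)$, the $m$ helical tubes collapse onto the $x_3$-axis as $\varepsilon\to0$. This is exactly the assertion of the corollary. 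The only genuine computation in the whole argument is the monotonicity of $g$ in the second paragraph, which pins the accumulation point at the origin; everything else is a specialization of Theorem \ref{thm2} together with the unpacking of the helical correspondence $\mathbf w=\tfrac{\omega}{k}\overrightarrow{\zeta}$.
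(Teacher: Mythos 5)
Your proposal is correct and follows essentially the same route as the paper: the paper's proof of Corollary \ref{cor3} is exactly the specialization of Theorem \ref{thm2} at $x_0=(0,0)$, with the verification that the origin is the strict maximizer of $\left(\tfrac{\alpha|x|^2}{2}+\beta\right)^2\sqrt{\det K_H}$ left as a "direct computation." Your explicit monotonicity argument for $g(s)=k\left(\tfrac{\alpha}{2}s+\beta\right)^2(k^2+s)^{-1/2}$ correctly supplies that computation (and even covers the borderline case $\alpha=0$, which the paper's proof nominally excludes by writing $\alpha<0$), and your circulation evaluation at $x_0=0$ matches the paper's stated limit $2\pi m\beta$.
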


The idea of  proof is as follows. We choose constants $ \alpha$ and  $\beta$ so that $ \alpha<0 $ and $ \min_{x\in B_{R^*}(0)}\left( \frac{\alpha|x|^2}{2}+\beta\right)>0 $ in Theorem \ref{thm2}. Direct computations show that $ (0,0) $ is a strict local maximum point of $ \left( \frac{\alpha|x|^2}{2}+\beta\right)^2\sqrt{\det K_H} $ up to a rotation. From Theorem \ref{thm2}, there exist clustered helical solutions concentrating near $ x_3- $axis. This phenomenon is not found in any existing literatures.
\begin{remark}
Indeed, it is also possible to construct clustered steady helical solutions to Euler equations \eqref{Euler eq} in general  helical domains, see \cite{Ben,CW2}. Moreover, it is interesting whether there exist clustered solutions to \eqref{vor str eq} with different vortex profiles, such as vortex patch solutions.
\end{remark}

\begin{remark}
In \cite{MR}, Martel and Rapha$\ddot{\text{e}}$l considered the  existence of clustered solutions for the mass critical two dimensional nonlinear Schr\"odinger equation
\begin{equation}\label{NLS}
i\partial_t u+\Delta u+|u|^2u=0,\ \ t\in\mathbb{R},\ \ x\in\mathbb{R}^2.
\end{equation}
Given any integer $ K\geq 2 $, the authors constructed a global (for $ t > 0 $) $ K $-solitary wave solution $ u(t) $ of \eqref{NLS} that decomposes asymptotically into a sum of
solitary waves centered at the vertices of a $ K $-sided regular polygon and concentrating at a logarithmic rate as $ t\to\infty$ so that the solution blows up in infinite time with the rate $ ||\nabla u||_{L^2}\sim |\ln t| $ as $ t\to \infty. $ Moreover, such solution concentrates $ K $ bubbles at a point $ x_0\in\mathbb{R}^2 $. In contrast to \cite{MR}, Theorem \ref{thm2} constructed clustered helical rotational-invariant solutions to 3D incompressible Euler equations that decomposes asymptotically into a sum of
bubbles collapsing to a point $ x_0  $ as $ \varepsilon\to0 $, rather than $ t\to \infty. $ It is interesting whether one can construct clustered helical solutions to 3D incompressible Euler equations which blow up in infinite time and finite time. To our knowledge, it is also unknown.
\end{remark}

The paper is organized as follows.  To construct clustered solutions to \eqref{111}, we first give the $ C^1 $-asymptotic expansion of Green's function $ G_K $ in section 2. We also choose the admissible class $ \Lambda_{\varepsilon, m} $ and approximate solutions properly and give some basic estimates for approximate solutions. In section 3, by using non-degeneracy of solutions to limiting equations \eqref{limit eq} we get coercive estimates of the linearized operator $ Q_\delta L_\delta $. The existence of the error term $ \omega_{\delta,Z} $ and  the differentiability of $ \omega_{\delta,Z}  $ with respect to $ Z $ are proved in section  4.
In sections 5 and 6, we calculate the order of the energy  $ K_\delta(Z) $ and show the existence of maximizers of $ K_\delta $ in $ \Lambda_{\varepsilon, m} $, which complete the proof of Theorem \ref{thm1}. The proof of  Theorem \ref{thm2} is given in section 7.

\section{Approximate solutions}

The purpose of this section is to give $ C^1 $ estimates of Green's function $ G_K $ and construct approximate solutions to \eqref{111}. 

The expansion of Green's function $ G_K $ of the operator $ -\text{div}(K(x)\nabla \cdot) $ with 0-Dirichlet condition  plays an essential role in our analysis.
Let $ G_K(x,y) $ be the Green's function of $ -\text{div}(K(x)\nabla \cdot) $ with 0-Dirichlet condition in $ \Omega $, that is, solutions of the following linear elliptic problem:
\begin{equation}\label{diver form}
\begin{cases}
-\text{div}(K(x)\nabla u)= f,\ \ & \Omega,\\
u=0,\ \ & \partial \Omega
\end{cases}
\end{equation}
can be expressed by $ u(x)=\int_{\Omega}G_K(x,y)f(y)dy $ for $ x\in \Omega. $

In \cite{CW} we have obtained $ C^0$-asymptotic expansion of $ G_K(x,y)$. We need to use $ C^1 $-asymptotic expansion of $ G_K(x,y) $ in the present paper.
\begin{lemma}[lemma 3.1, \cite{CW}]\label{Green expansion}
For $ y\in\Omega $, let $ T_y=T(y) $ be the unique positive-definite matrix satisfying
$$ (T_y)^{-1}(T_y)^{-t}=K(y). $$ 
Then there exists a function $\bar{S}_K\in C^{0,\gamma}_{loc} (\Omega\times \Omega) $ for any $ \gamma\in(0,1) $, such that
\begin{equation*}
G_K(x,y)=\sqrt{\det K(y)}^{-1}\Gamma\left (T_y(x-y)\right )+\bar{S}_K(x,y),\ \ \forall\ x,y\in\Omega.
\end{equation*}
\end{lemma}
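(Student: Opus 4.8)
I would prove this by the classical frozen-coefficient parametrix method. First I would introduce the model singular part
\begin{equation*}
\Phi(x,y):=\sqrt{\det K(y)}^{-1}\,\Gamma\big(T_y(x-y)\big),
\end{equation*}
and check that, for each fixed $y\in\Omega$, $-\text{div}_x\big(K(y)\nabla_x\Phi(\cdot,y)\big)=\delta_y$ in $\mathcal{D}'(\mathbb{R}^2)$. This is a short computation: under the linear substitution $z=T_y(x-y)$ the defining relation $(T_y)^{-1}(T_y)^{-t}=K(y)$ gives $T_yK(y)T_y^{t}=I$, whence $\text{div}_x\big(K(y)\nabla_x[\Gamma(T_y(x-y))]\big)=(\Delta\Gamma)(T_y(x-y))$; combining $\Delta\Gamma=-\delta_0$ with the scaling rule $\delta_0(T_y\,\cdot\,)=|\det T_y|^{-1}\delta_0$ and $|\det T_y|=\sqrt{\det(T_y^{t}T_y)}=(\det K(y))^{-1/2}$ produces exactly the stated prefactor.

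Next I would \emph{define} $\bar S_K(x,y):=G_K(x,y)-\Phi(x,y)$ and show it has the claimed regularity. Using the defining property $-\text{div}_x\big(K(x)\nabla_x G_K(\cdot,y)\big)=\delta_y$ together with the previous step and the identity $-\text{div}_x(K(x)\nabla_x\Phi)=-\text{div}_x\big((K(x)-K(y))\nabla_x\Phi\big)+\delta_y$, one sees that $x\mapsto\bar S_K(x,y)$ solves
\begin{equation*}
\begin{cases}
-\text{div}_x\big(K(x)\nabla_x\bar S_K(\cdot,y)\big)=\text{div}_x\Big(\big(K(x)-K(y)\big)\nabla_x\Phi(\cdot,y)\Big), & x\in\Omega,\\
\bar S_K(\cdot,y)=-\Phi(\cdot,y), & x\in\partial\Omega.
\end{cases}
\end{equation*}
The point of isolating this combination is that $K\in C^{\infty}(\overline{\Omega})$ gives $|K(x)-K(y)|\le C|x-y|$, while $|\nabla_x\Phi(x,y)|\le C|x-y|^{-1}$ (with $C$ uniform for $y$ in a compact subset of $\Omega$, since $(\mathcal{K}1)$ bounds $T_y$ and $\det K(y)$ above and below), so the field $F_y:=(K(\cdot)-K(y))\nabla_x\Phi(\cdot,y)$ belongs to $L^\infty(\Omega)$ with $\|F_y\|_{L^\infty}$ locally bounded in $y$; the boundary datum $-\Phi(\cdot,y)|_{\partial\Omega}$ is moreover smooth in $x$ when $y$ stays in a compact subset of $\Omega$. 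Applying the $W^{1,q}$ estimate for divergence-form uniformly elliptic equations with right-hand side $\text{div}(L^q)$ and smooth boundary data (valid for every $q<\infty$ since $K$ is smooth), I get $\bar S_K(\cdot,y)\in W^{1,q}(\Omega)$ for all $q<\infty$ with norm locally bounded in $y$; the two-dimensional Morrey embedding $W^{1,q}\hookrightarrow C^{0,1-2/q}$ then yields $\bar S_K(\cdot,y)\in C^{0,\gamma}(\overline{\Omega})$ for every $\gamma\in(0,1)$.

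To upgrade this to joint continuity $\bar S_K\in C^{0,\gamma}_{loc}(\Omega\times\Omega)$, I would fix $y_0\in\Omega$ and compare: $\bar S_K(\cdot,y)-\bar S_K(\cdot,y_0)$ solves the same type of problem with right-hand side $\text{div}(F_y-F_{y_0})$ and boundary value $-(\Phi(\cdot,y)-\Phi(\cdot,y_0))|_{\partial\Omega}$. Since $F_y\to F_{y_0}$ pointwise a.e. as $y\to y_0$ while $\|F_y\|_{L^\infty}$ remains bounded, dominated convergence gives $F_y\to F_{y_0}$ in $L^q(\Omega)$, and $\Phi(\cdot,y)\to\Phi(\cdot,y_0)$ in $C^{\infty}(\partial\Omega)$; feeding these into the elliptic estimate shows $\bar S_K(\cdot,y)\to\bar S_K(\cdot,y_0)$ in $W^{1,q}$, hence uniformly. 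Combined with the locally uniform Hölder-in-$x$ bound, this yields joint Hölder continuity on compacta with any exponent $<1$. (One can alternatively exploit the symmetry $G_K(x,y)=G_K(y,x)$.)

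The main obstacle I anticipate is the near-diagonal bookkeeping carried out \emph{locally uniformly in the pole} $y$ and up to $\partial\Omega$: one has to be sure that the only genuine singularity is $\Phi$, that the corrector equation really has right-hand side equal to the divergence of a field that is merely \emph{bounded} near the diagonal and not worse, and that the elliptic estimates and the $L^q$/boundary convergences in the last step are uniform as $y$ ranges over compacta. This is precisely what forces $\bar S_K$ into $C^{0,\gamma}$ rather than a weaker space and what makes the joint-continuity argument close.
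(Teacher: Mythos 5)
Two remarks before the substance: the paper does not actually prove this lemma — it imports it verbatim from \cite{CW} (lemma 3.1 there) — so the only internal check available is that your derived boundary value problem for $\bar S_K$ coincides with \eqref{equa of S_K}, which is exactly the starting point of the paper's proof of Lemma \ref{Green expansion2}. It does coincide, and your normalization computation ($T_yK(y)T_y^t=I$, $|\det T_y|=(\det K(y))^{-1/2}$, hence $-\mathrm{div}_x(K(y)\nabla_x\Phi)=\delta_y$) is correct. Your regularity step is a legitimate variant of the standard one: you keep the right-hand side in divergence form with $F_y=(K(\cdot)-K(y))\nabla_x\Phi(\cdot,y)\in L^\infty$ and use $W^{1,q}$ estimates plus Morrey embedding, whereas the cited source expands the divergence, obtains a right-hand side in $L^q(\Omega)$ only for $1<q<2$ (it behaves like $|x-y|^{-1}$ near the pole), and uses $W^{2,q}\hookrightarrow C^{0,2-2/q}$. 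Both give $C^{0,\gamma}(\overline\Omega)$ in $x$ for every $\gamma<1$, locally uniformly in $y$.

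The one genuine soft spot is the last step. From dominated convergence you get $F_y\to F_{y_0}$ in $L^q$ and hence $\bar S_K(\cdot,y)\to\bar S_K(\cdot,y_0)$ uniformly, i.e.\ continuity in $y$ \emph{without a rate}. Uniform H\"older continuity in $x$ plus mere continuity in $y$ does not imply joint H\"older continuity on $\Omega\times\Omega$: for that you need a quantitative modulus in $y$ as well, e.g.\ an estimate of the form $\|F_y-F_{y_0}\|_{L^q}\le C|y-y_0|^{\alpha}$ for some $\alpha>0$ (which does hold here — split the integral into the region $|x-y|+|x-y_0|\le 2|y-y_0|^{1/2}$, where one uses $|F_y|,|F_{y_0}|\le C$ and the smallness of the domain of integration, and its complement, where one differentiates in $y$ — but you have not carried this out), or alternatively the symmetrization device used in \cite{CW1} for the symmetric kernel, which is not directly available for the non-symmetric $\bar S_K$. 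As written, your argument proves $\bar S_K\in C(\Omega;C^{0,\gamma}_{loc}(\overline\Omega))$ and joint continuity, but not the claimed joint $C^{0,\gamma}_{loc}(\Omega\times\Omega)$ regularity; you should either supply the quantitative $y$-modulus or note that the weaker conclusion is all that is used downstream.
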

Now for $ i,j=1,2 $, we denote $ T_{ij}=(T_y)_{ij}=(T(y))_{ij} $ the component of row $ i $, column $ j $ of the matrix $ T_y $. The following lemma gives  the $ C^1 $-asymptotic expansion of $ G_K(x,y) $.
\begin{lemma}\label{Green expansion2}
 Then there holds
\begin{equation*}
\bar{S}_K(x,y)=-F_{1,y}(x)-F_{2,y}(x)+\bar{H}_1(x,y) \ \ \ \ \forall\ x,y\in\Omega,
\end{equation*}
where
\begin{equation}\label{exp of F_{1,y}}
\begin{split}
F_{1,y}(x)=-\frac{1}{4\pi}\sqrt{\det K(y)}^{-1}\sum_{i,j,m=1}^2T_{mj}\partial_{x_i}K_{ij}(y)\left( T_y(x-y)\right)_m\ln|T_y(x-y)|,
\end{split}
\end{equation}
\begin{equation*}
\begin{split}
F_{2,y}(x)=&\frac{1}{\pi}\sqrt{\det K(y)}^{-1}\sum_{i,j,\alpha=1}^2\partial_{x_\alpha}K_{ij}(y)\cdot\\
&\bigg\{T^{-1}_{\alpha1}T_{1j}T_{1i}\left( -\frac{1}{8}\frac{\left( T_y\left (x-y\right )\right) _1^3}{|T_y\left (x-y\right )|^2}+\frac{1}{8}\left( T_y\left (x-y\right )\right)_1\ln|T_y\left (x-y\right )|\right) \\
+&T^{-1}_{\alpha1}T_{1j}T_{2i}\left( -\frac{1}{8}\frac{\left( T_y\left (x-y\right )\right) _1^2\left( T_y\left (x-y\right )\right) _2}{|T_y\left (x-y\right )|^2}+\frac{1}{8}\left( T_y\left (x-y\right )\right)_2\ln|T_y\left (x-y\right )|\right) \\
+&T^{-1}_{\alpha1}T_{2j}T_{1i}\left( -\frac{1}{8}\frac{\left( T_y\left (x-y\right )\right) _1^2\left( T_y\left (x-y\right )\right) _2}{|T_y\left (x-y\right )|^2}+\frac{1}{8}\left( T_y\left (x-y\right )\right)_2\ln|T_y\left (x-y\right )|\right)\\
+&T^{-1}_{\alpha1}T_{2j}T_{2i}\left( -\frac{1}{8}\frac{\left( T_y\left (x-y\right )\right) _1\left( T_y\left (x-y\right )\right) _2^2}{|T_y\left (x-y\right )|^2}-\frac{1}{8}\left( T_y\left (x-y\right )\right)_1\ln|T_y\left (x-y\right )|\right)
\end{split}
\end{equation*}
\begin{equation}\label{exp of F_{2,y}}
\begin{split}
+&T^{-1}_{\alpha2}T_{1j}T_{1i}\left( -\frac{1}{8}\frac{\left( T_y\left (x-y\right )\right) _1^2\left( T_y\left (x-y\right )\right) _2}{|T_y\left (x-y\right )|^2}-\frac{1}{8}\left( T_y\left (x-y\right )\right)_2\ln|T_y\left (x-y\right )|\right)\\
+&T^{-1}_{\alpha2}T_{1j}T_{2i}\left( -\frac{1}{8}\frac{\left( T_y\left (x-y\right )\right) _1\left( T_y\left (x-y\right )\right) _2^2}{|T_y\left (x-y\right )|^2}+\frac{1}{8}\left( T_y\left (x-y\right )\right)_1\ln|T_y\left (x-y\right )|\right)\\
+&T^{-1}_{\alpha2}T_{2j}T_{1i}\left( -\frac{1}{8}\frac{\left( T_y\left (x-y\right )\right) _1\left( T_y\left (x-y\right )\right) _2^2}{|T_y\left (x-y\right )|^2}+\frac{1}{8}\left( T_y\left (x-y\right )\right)_1\ln|T_y\left (x-y\right )|\right)\\
+&T^{-1}_{\alpha2}T_{2j}T_{2i}\left( -\frac{1}{8}\frac{\left( T_y\left (x-y\right )\right) _2^3 }{|T_y\left (x-y\right )|^2}+\frac{1}{8}\left( T_y\left (x-y\right )\right)_2\ln|T_y\left (x-y\right )|\right)\bigg\},
\end{split}
\end{equation}
 and $ x\to \bar{H}_1(x,y)\in C^{1,\gamma}(\overline{\Omega})$ for all $ y\in \Omega $, $ \gamma\in (0, 1) $.  Moreover, the function $(x, y)\to \bar{H}_1(x,y)\in C^1(\Omega\times\Omega),$ and in particular the corresponding Robin function $x\to \bar{S}_K(x,x)\in C^1(\Omega)$.
\end{lemma}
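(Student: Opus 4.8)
The plan is to construct a refined parametrix for $-\mathrm{div}(K(x)\nabla\cdot)$, extending the frozen‑coefficient expansion behind Lemma~\ref{Green expansion} by one order, and then apply Calder\'on--Zygmund theory. Write $\Gamma_y(x):=\sqrt{\det K(y)}^{-1}\Gamma(T_y(x-y))$. Since $(T_y)^{-1}(T_y)^{-t}=K(y)$ we have $T_yK(y)T_y^{t}=I$, so the affine change of variables $z=T_y(x-y)$ conjugates the frozen operator $L_y:=-\mathrm{div}(K(y)\nabla\cdot)$ into $-\Delta_z$; hence $L_y\Gamma_y=\delta_y$ and
\[
L_x\Gamma_y=\delta_y+(L_x-L_y)\Gamma_y,\qquad (L_x-L_y)\Gamma_y=-\mathrm{div}_x\big((K(x)-K(y))\nabla_x\Gamma_y\big).
\]
Taylor expanding $K$ to first order about $y$ and using $\nabla\Gamma_y=O(|x-y|^{-1})$, $D^2\Gamma_y=O(|x-y|^{-2})$, one splits $(L_x-L_y)\Gamma_y=g_1(\cdot,y)+E_2(\cdot,y)$, where $g_1$ collects the part homogeneous of degree $-1$ in $x-y$, namely $-\sum_{i,j}\partial_{x_i}K_{ij}(y)\,\partial_{x_j}\Gamma_y-\sum_{i,j,\alpha}\partial_{x_\alpha}K_{ij}(y)(x-y)_\alpha\,\partial_{x_ix_j}\Gamma_y$, while $E_2$ pairs a factor $O(|x-y|^{2})$ with $D^2\Gamma_y$ or a factor $O(|x-y|)$ with $\nabla\Gamma_y$ and so lies in $L^\infty_{\mathrm{loc}}$. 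The presence of the genuinely $O(|x-y|^{-1})$ term $g_1$ is exactly why $\bar S_K$ in Lemma~\ref{Green expansion} is merely H\"older continuous.

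The functions $F_{1,y},F_{2,y}$ are built so that $\Phi_{1,y}:=-F_{1,y}-F_{2,y}$ solves the model equation $L_y\Phi_{1,y}=-g_1$, i.e.\ $-\Delta_z\Phi_{1,y}=-g_1$ in the $z$ variable. Decomposing $g_1$ into spherical harmonics in $z$: its first term is a constant combination of $z_m/|z|^{2}$, a pure first Fourier mode, which is resonant for $-\Delta_z$, so its preimage has the form $(\text{linear in }z)\times\ln|z|$; this is $F_{1,y}$, and the check reduces to the identity $\Delta_z\big((\ell\cdot z)\ln|z|\big)=2(\ell\cdot z)/|z|^{2}$ for constant $\ell$. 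Its second term, after substituting $(x-y)_\alpha=(T_y^{-1})_{\alpha\beta}z_\beta$, carries first and third Fourier modes, whose preimage is a degree‑one function rational in $z$ plus another $(\text{linear in }z)\times\ln|z|$ piece; this is $F_{2,y}$, obtained by the analogous but longer computation with cubic numerators. I would verify the precise coefficients in \eqref{exp of F_{1,y}}--\eqref{exp of F_{2,y}} by this direct chain‑rule computation through $z=T_y(x-y)$ with $\Gamma(z)=-\frac{1}{2\pi}\ln|z|$. Since $\Phi_{1,y}$ has gradient $O(|\ln|x-y||)$ and Hessian $O(|x-y|^{-1})$, one gets $(L_x-L_y)\Phi_{1,y}=O(|\ln|x-y||)$, hence
\[
L_x\big(\Gamma_y+\Phi_{1,y}\big)=\delta_y+R(\cdot,y),\qquad R(\cdot,y)\in L^{p}_{\mathrm{loc}}(\Omega)\ \text{for every }p<\infty,
\]
with $R$ smooth away from the diagonal, in particular near $\partial\Omega$.

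Now set $\bar H_1:=G_K-\Gamma_y-\Phi_{1,y}=\bar S_K+F_{1,y}+F_{2,y}$. For fixed $y\in\Omega$, $x\mapsto\bar H_1(x,y)$ solves $L_x\bar H_1(\cdot,y)=-R(\cdot,y)$ in $\Omega$ with the smooth datum $\bar H_1(\cdot,y)|_{\partial\Omega}=-(\Gamma_y+\Phi_{1,y})|_{\partial\Omega}\in C^\infty(\partial\Omega)$ (no singularity on $\partial\Omega$, as $y\in\Omega$). Since $K$ is smooth and uniformly elliptic, the $W^{2,p}$ estimates for divergence‑form operators give $\bar H_1(\cdot,y)\in W^{2,p}(\Omega)$ for all $p<\infty$, hence $\bar H_1(\cdot,y)\in C^{1,\gamma}(\overline{\Omega})$ for all $\gamma\in(0,1)$ by Sobolev embedding, with bounds locally uniform in $y$; this is the first assertion.

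The joint $C^1$ regularity in $(x,y)$ is where I expect the main work. Differentiating the problem for $\bar H_1$ in $y$, the key point is that the $x$‑independent contributions produced when $\partial_{y_k}$ falls on the argument $z=T_y(x-y)$ inside $\Gamma_y$ are cancelled exactly by the corresponding contributions from $\Phi_{1,y}=-F_{1,y}-F_{2,y}$ --- this cancellation is precisely what forces the logarithmic and degree‑one terms, with their coefficients, into $F_{1,y}$ and $F_{2,y}$. Consequently $\partial_{y_k}R(\cdot,y)=O(|x-y|^{-1})\in L^{p}_{\mathrm{loc}}(\Omega)$ for all $p\in(1,2)$, depending continuously on $y$, while $\partial_{y_k}(\Gamma_y+\Phi_{1,y})|_{\partial\Omega}$ depends continuously on $y$ in $C^\infty(\partial\Omega)$. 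Passing to the limit in difference quotients (using the $W^{2,p}$ estimates and continuous dependence of solutions on the data), $\partial_{y_k}\bar H_1(\cdot,y)$ exists, solves $L_x v=-\partial_{y_k}R(\cdot,y)$ with that datum, and hence lies in $W^{2,p}(\Omega)\hookrightarrow C^{0,\gamma'}(\overline{\Omega})$ for some $\gamma'\in(0,1)$, depending continuously on $y$; combined with the locally uniform $C^{1,\gamma}_x$‑bounds above this yields $\bar H_1\in C^1(\Omega\times\Omega)$. Finally, each term of $F_{1,y}(x)$ and $F_{2,y}(x)$ carries a factor $O(|x-y|\,|\ln|x-y||)$ or $O(|x-y|)$, so $\bar S_K(x,x)=\lim_{y\to x}\big(-F_{1,y}(x)-F_{2,y}(x)+\bar H_1(x,y)\big)=\bar H_1(x,x)$; since the restriction of a $C^1$ function on $\Omega\times\Omega$ to the diagonal is $C^1$, the Robin function $x\mapsto\bar S_K(x,x)$ belongs to $C^1(\Omega)$.
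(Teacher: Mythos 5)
Your proposal is correct and follows essentially the same route as the paper: freeze the coefficients at $y$, isolate the degree $-1$ homogeneous part of the commutator $(L_x-L_y)\Gamma_y$ via a first-order Taylor expansion of $K$, build $F_{1,y},F_{2,y}$ as explicit constant-coefficient Laplace preimages of that part using identities of the type $\Delta\big(x_m\ln|x|\big)=2x_m/|x|^2$ and $\Delta\big(-\tfrac18 x^p/|x|^2+c\,x_k\ln|x|\big)=x^p/|x|^4$, and then run elliptic $W^{2,p}$ theory on the remainder $\bar H_1$ and on its $y$-derivative. The only difference is presentational (you work with $L_x(\Gamma_y+\Phi_{1,y})=\delta_y+R$ while the paper manipulates the boundary-value problem for $\bar S_K$ directly, and you phrase the mode decomposition in Fourier language rather than via the paper's explicit function $J(x)=-\tfrac{1}{8\pi}|x|^2\ln|x|$), which is mathematically equivalent.
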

\begin{remark}
It follows from Lemma \ref{Green expansion2}  that Green's function $ G_K $ has an expansion
\begin{equation}\label{exp of G_K}
G_K(x,y)=\sqrt{\det K(y)}^{-1}\Gamma\left (T_y(x-y)\right )-F_{1,y}(x)-F_{2,y}(x)+\bar{H}_1(x,y),\ \ \forall\ x,y\in\Omega.
\end{equation}

Before proving Lemma \ref{Green expansion2} let us now give some examples to explain results in Lemma \ref{Green expansion2}.\\

\noindent$ Example $ 1. If $ K(x)= Id $, then \eqref{diver form} is the standard Laplacian problem. In this case, one computes directly that $ F_{1,y}=F_{2,y}\equiv 0 $. From \eqref{exp of G_K},  Green's function has an expansion
\begin{equation*}
G_1(x,y)=\Gamma(x-y)+S_1(x,y),\ \ \ \ \forall\ x,y\in \Omega.
\end{equation*}
Thus we have $ S_1(x,y)=-H(x,y) $, where $ H(x,y) $ is the regular part of Green's function of $ -\Delta $ in $ \Omega $ with zero-Dirichlet data, which coincides with classical results in \cite{GT}.\\

\noindent$ Example $ 2. If  $ K(x)=\frac{1}{b(x)}Id $, where $ b\in C^2(\overline{\Omega}) $ and $ \inf_{\Omega}b>0 $, then $ \det K=\frac{1}{b^2} $ and $ T=\sqrt{b}Id $. By \eqref{exp of F_{1,y}} and \eqref{exp of F_{2,y}} it is not hard to get that $ F_{1,y}(x)= \frac{\nabla b(y)\cdot (x-y)}{4\pi}\ln|x-y|+F^*_1(x,y) $ and $ F_{2,y}(x)=F^*_2(x,y) $ for $ x,y\in\Omega $, where $ F^*_1,F^*_2\in C^{1}(\Omega\times\Omega) $. From  \eqref{exp of G_K},  Green's function has an expansion
\begin{equation*}
G_b(x,y)=b(y)\Gamma\left (x-y\right )-\frac{\nabla b(y)\cdot (x-y)}{4\pi}\ln|x-y|+S_b(x,y) \ \ \ \ \forall\ x,y\in \Omega,
\end{equation*}
where $ S_b\in C^1(\Omega\times\Omega) $, which coincides with results in \cite{De2,WYZ}.
\end{remark}

We now turn to the proof of Lemma \ref{Green expansion2}.
\begin{proof}
Let  $ y\in\Omega $ be fixed. In the following,  we always denote $ T_{ij}=(T_y)_{ij} $ the component of row $ i $, column $ j $ of the matrix $ T_y $ for $ i,j=1,2 $. From Lemma \ref{Green expansion}, the regular part $ \bar{S}_K(x,y) $ satisfies
\begin{equation}\label{equa of S_K}
\begin{cases}
-\text{div}\left (K(x)\nabla \bar{S}_K(x,y)\right )=\text{div}\left (\left (K(x)-K(y)\right )\nabla \left( \sqrt{\det K(y)}^{-1}\Gamma\left (T_y(x-y)\right )\right)  \right )\ &\text{in}\ \Omega,\\
\bar{S}_K(x,y)=-\sqrt{\det K(y)}^{-1}\Gamma\left (T_y(x-y)\right )\ &\text{on}\ \partial\Omega.
\end{cases}
\end{equation}
This implies that
\begin{equation}\label{3-1}
\begin{split}
-\text{div}\left (K(x)\nabla \bar{S}_K(x,y)\right )=&\sum_{i,j=1}^2\partial_{x_i}K_{ij}(x)\partial_{x_j}\left(\sqrt{\det K(y)}^{-1}\Gamma\left (T_y(x-y)\right ) \right) \\
&+\sum_{i,j=1}^2\left( \left( K_{ij}(x)-K_{ij}(y)\right) \partial_{x_ix_j}\left(\sqrt{\det K(y)}^{-1}\Gamma\left (T_y(x-y)\right ) \right)\right)\\
=&:A_1+A_2.
\end{split}
\end{equation}

As for $ A_1 $, for $ x\in\mathbb{R}^2 $, we denote $ J(x)=-\frac{1}{8\pi}|x|^2\ln|x| $. Then $ \Delta \left( J(x-y)\right)=\Gamma(x-y)-\frac{1}{2\pi}.  $
Using transformation of coordinates, one computes directly that
\begin{equation*}
\text{div}\left(K(y)\nabla J(T_y(x-y)) \right)= \Gamma(T_y(x-y))-\frac{1}{2\pi},
\end{equation*}
from which we deduce,
\begin{equation*}
\text{div}\left(K(y)\nabla   \partial_{x_j}\left( J(T_y(x-y))\right)  \right)= \partial_{x_j} \left( \Gamma(T_y(x-y))\right) .
\end{equation*}
We define for $ x\in\Omega $
\begin{equation}\label{F_{1,y}}
\begin{split}
F_{1,y}(x)=\sum_{i,j=1}^2\partial_{x_i}K_{ij}(y)\cdot\sqrt{\det K(y)}^{-1} \partial_{x_j}\left( J\left (T_y(x-y)\right )\right) ,
\end{split}
\end{equation}
then one has
\begin{equation}\label{3-2}
\text{div}\left(K(y)\nabla F_{1,y}(x)  \right)= \sum_{i,j=1}^2\partial_{x_i}K_{ij}(y)\partial_{x_j}\left(\sqrt{\det K(y)}^{-1}\Gamma\left (T_y(x-y)\right ) \right).
\end{equation}

As for $ A_2 $, using Taylor's expansion we obtain
\begin{equation}\label{3-3}
\begin{split}
\sum_{i,j=1}^2\left( K_{ij}(x)-K_{ij}(y)\right) \partial_{x_ix_j}&\left(\sqrt{\det K(y)}^{-1}\Gamma\left (T_y(x-y)\right ) \right)\\
=\sum_{\alpha,i,j=1}^2\sqrt{\det K(y)}^{-1}&\partial_{x_\alpha}K_{ij}(y) (x-y)_\alpha\cdot\partial_{x_ix_j}\left( \Gamma\left (T_y(x-y)\right ) \right)+\phi_y(x),
\end{split}
\end{equation}
where $ \phi_y(\cdot)\in L^p(\Omega) $ for all $ p>1. $
Since
\begin{equation*}
\partial_{x_ix_j}\Gamma(x)=-\frac{1}{2\pi}\left(\frac{\delta_{i,j}}{|x|^2}-\frac{2x_ix_j}{|x|^4} \right),
\end{equation*}
where $ \delta_{i,j}=1 $ for $ i=j $ and $ \delta_{i,j}=0 $ for $ i\neq j $, we have
\begin{equation}\label{3-6}
\begin{split}
\partial_{x_ix_j}\left( \Gamma(T_y(x-y))\right) =-\frac{1}{2\pi}\sum_{m,n=1}^2T_{mj}T_{ni}\left(\frac{\delta_{m,n}}{|T_y(x-y)|^2}-\frac{2\left( T_y(x-y)\right)_m\left( T_y(x-y)\right)_n}{|T_y(x-y)|^4} \right).
\end{split}
\end{equation}
Taking \eqref{3-6} into \eqref{3-3}, we get
\begin{equation}\label{3-7}
\begin{split}
\sum_{i,j=1}^2\left( K_{ij}(x)-K_{ij}(y)\right) \partial_{x_ix_j}&\left(\sqrt{\det K(y)}^{-1}\Gamma\left (T_y(x-y)\right ) \right)\\
=\sum_{\alpha,\beta,i,j,m,n=1}^2\sqrt{\det K(y)}^{-1}&\partial_{x_\alpha}K_{ij}(y)T^{-1}_{\alpha\beta} \left (T_y(x-y)\right )_\beta\cdot  \\ -\frac{1}{2\pi}T_{mj}T_{ni}&\left(\frac{\delta_{m,n}}{|T_y(x-y)|^2}-\frac{2\left( T_y(x-y)\right)_m\left( T_y(x-y)\right)_n}{|T_y(x-y)|^4} \right)  +\phi_y(x).
\end{split}
\end{equation}

Note that
\begin{equation}\label{3-4}
\frac{x^p}{|x|^4}=-\frac{1}{8}\Delta\left( \frac{x^p}{|x|^2}\right)+\frac{1}{8}\frac{\Delta x^p}{|x|^2}\ \ \ \ \text{for}\ |p|=3,
\end{equation}
where $p=(p_1,p_2)$ is the multi-index and $ x^p=x_1^{p_1}x_2^{p_2} $. From \eqref{3-4}, it is not hard to check that for $ 1\leq m\neq n\leq 2 $
\begin{equation*}
\begin{cases}
\frac{x_m}{|x|^2}&=\Delta\left( \frac{1}{2}x_m\ln|x|\right),\\
\frac{x_m^2x_n}{|x|^4}&=\Delta\left( -\frac{1}{8}\frac{x_m^2x_n}{|x|^2}+\frac{1}{8}x_n\ln|x|\right),\\
\frac{x_m^3}{|x|^4}&=\Delta\left( -\frac{1}{8}\frac{x_m^3}{|x|^2}+\frac{3}{8}x_m\ln|x|\right),
\end{cases}
\end{equation*}
which implies that
\begin{equation}\label{3-5}
\begin{cases}
\frac{\left (T_y\left (x-y\right )\right )_m}{|T_y\left (x-y\right )|^2}&=\text{div}\left(K(y) \nabla\left( \frac{1}{2}\left (T_y\left (x-y\right )\right )_m\ln|T_y\left (x-y\right )|\right)\right) ,\\
\frac{\left (T_y\left (x-y\right )\right )_m^2\left (T_y\left (x-y\right )\right )_n}{|T_y\left (x-y\right )|^4}&=\text{div}\left(K(y) \nabla \left( -\frac{1}{8}\frac{\left (T_y\left (x-y\right )\right )_m^2\left (T_y\left (x-y\right )\right )_n}{|T_y\left (x-y\right )|^2}+\frac{1}{8}\left (T_y\left (x-y\right )\right )_n\ln|T_y\left (x-y\right )|\right)\right) ,\\
\frac{\left (T_y\left (x-y\right )\right )_m^3}{|T_y\left (x-y\right )|^4}&=\text{div}\left(K(y) \nabla\left( -\frac{1}{8}\frac{\left (T_y\left (x-y\right )\right )_m^3}{|T_y\left (x-y\right )|^2}+\frac{3}{8}\left (T_y\left (x-y\right )\right )_m\ln|T_y\left (x-y\right )|\right)\right).
\end{cases}
\end{equation}
We define for $ x\in \Omega $
\begin{equation*}
\begin{split}
F_{2,y}(x)=-&\frac{1}{2\pi}\sqrt{\det K(y)}^{-1}\sum_{\alpha,\beta,i,j,m,n=1}^2 \partial_{x_\alpha}K_{ij}(y)T^{-1}_{\alpha\beta} T_{mj}T_{ni}\cdot\frac{1}{2}\left (T_y\left (x-y\right )\right )_\beta\ln|T_y\left (x-y\right )|\delta_{m,n}\\
+&\frac{1}{\pi}\sqrt{\det K(y)}^{-1} \sum_{\alpha,i,j=1}^2\partial_{x_\alpha}K_{ij}(y)\cdot \\
&\bigg[T^{-1}_{\alpha1}T_{1j}T_{1i}\left( -\frac{1}{8}\frac{\left( T_y\left (x-y\right )\right) _1^3}{|T_y\left (x-y\right )|^2}+\frac{3}{8}\left( T_y\left (x-y\right )\right)_1\ln|T_y\left (x-y\right )|\right) \\
+&T^{-1}_{\alpha1}T_{1j}T_{2i}\left( -\frac{1}{8}\frac{\left( T_y\left (x-y\right )\right) _1^2\left( T_y\left (x-y\right )\right) _2}{|T_y\left (x-y\right )|^2}+\frac{1}{8}\left( T_y\left (x-y\right )\right)_2\ln|T_y\left (x-y\right )|\right) \\
+&T^{-1}_{\alpha1}T_{2j}T_{1i}\left( -\frac{1}{8}\frac{\left( T_y\left (x-y\right )\right) _1^2\left( T_y\left (x-y\right )\right) _2}{|T_y\left (x-y\right )|^2}+\frac{1}{8}\left( T_y\left (x-y\right )\right)_2\ln|T_y\left (x-y\right )|\right)\\
+&T^{-1}_{\alpha1}T_{2j}T_{2i}\left( -\frac{1}{8}\frac{\left( T_y\left (x-y\right )\right) _1\left( T_y\left (x-y\right )\right) _2^2}{|T_y\left (x-y\right )|^2}+\frac{1}{8}\left( T_y\left (x-y\right )\right)_1\ln|T_y\left (x-y\right )|\right)
\end{split}
\end{equation*}
\begin{equation}\label{F_{2,y}}
\begin{split}
+&T^{-1}_{\alpha2}T_{1j}T_{1i}\left( -\frac{1}{8}\frac{\left( T_y\left (x-y\right )\right) _1^2\left( T_y\left (x-y\right )\right) _2}{|T_y\left (x-y\right )|^2}+\frac{1}{8}\left( T_y\left (x-y\right )\right)_2\ln|T_y\left (x-y\right )|\right)\\
+&T^{-1}_{\alpha2}T_{1j}T_{2i}\left( -\frac{1}{8}\frac{\left( T_y\left (x-y\right )\right) _1\left( T_y\left (x-y\right )\right) _2^2}{|T_y\left (x-y\right )|^2}+\frac{1}{8}\left( T_y\left (x-y\right )\right)_1\ln|T_y\left (x-y\right )|\right)\\
+&T^{-1}_{\alpha2}T_{2j}T_{1i}\left( -\frac{1}{8}\frac{\left( T_y\left (x-y\right )\right) _1\left( T_y\left (x-y\right )\right) _2^2}{|T_y\left (x-y\right )|^2}+\frac{1}{8}\left( T_y\left (x-y\right )\right)_1\ln|T_y\left (x-y\right )|\right)\\
+&T^{-1}_{\alpha2}T_{2j}T_{2i}\left( -\frac{1}{8}\frac{\left( T_y\left (x-y\right )\right) _2^3 }{|T_y\left (x-y\right )|^2}+\frac{3}{8}\left( T_y\left (x-y\right )\right)_2\ln|T_y\left (x-y\right )|\right)\bigg].
\end{split}
\end{equation}
Combining \eqref{F_{2,y}} with \eqref{3-7} and \eqref{3-5}, we get
\begin{equation}\label{3-8}
\begin{split}
\text{div}\left(K(y) \nabla F_{2,y}(x)  \right)=&\sum_{\alpha,\beta,i,j,m,n=1}^2\sqrt{\det K(y)}^{-1}\partial_{x_\alpha}K_{ij}(y)T^{-1}_{\alpha\beta} \left (T_y(x-y)\right )_\beta\cdot \\
&-\frac{1}{2\pi}T_{mj}T_{ni}\left(\frac{\delta_{m,n}}{|T_y(x-y)|^2}-\frac{2\left( T_y(x-y)\right)_m\left( T_y(x-y)\right)_n}{|T_y(x-y)|^4} \right)\\
=&
\sum_{i,j=1}^2\left( K_{ij}(x)-K_{ij}(y)\right) \partial_{x_ix_j}\left(\sqrt{\det K(y)}^{-1}\Gamma\left (T_y(x-y)\right ) \right)-\phi_y(x).
\end{split}
\end{equation}

Now we define $ \bar{H}_{1,y}(x)=\bar{S}_K(x,y)+F_{1,y}(x)+F_{2,y}(x) $. Taking \eqref{3-2} and \eqref{3-8} into \eqref{3-1}, we obtain
\begin{equation}\label{3-9}
\begin{split}
-\text{div}&\left (K(x)\nabla \bar{H}_{1,y}(x)\right )\\
=&-\text{div}\left (\left( K(x)-K(y)\right) \nabla\left( F_{1,y}(x)+F_{2,y}(x)\right) \right) \\
&+\sum_{i,j=1}^2\left( \partial_{x_i}K_{ij}(x)-\partial_{x_i}K_{ij}(y)\right) \partial_{x_j}\left(\sqrt{\det K(y)}^{-1}\Gamma\left (T_y(x-y)\right ) \right)+\phi_y(x).
\end{split}
\end{equation}
We can verify that for all $ y\in\Omega $, the right-hand side of \eqref{3-9} belongs to $ L^p(\Omega) $ for all $ p>1. $ Note also that
\begin{equation*}
\bar{H}_{1,y}(x)=-\sqrt{\det K(y)}^{-1}\Gamma\left (T_y(x-y)\right )+F_{1,y}(x)+F_{2,y}(x)\ \ \ \ x\in\partial\Omega.
\end{equation*}
For $ x,y\in\Omega $, we define $ \bar{H}_1(x,y)=\bar{H}_{1,y}(x) $. Applying the elliptic theory, we obtain that $ x\to \bar{H}_1(x,y) $ is in $ C^{1,\gamma}(\overline{\Omega}) $, for all $ \gamma\in(0,1) $. Furthermore,
by the continuity of the right-hand side of \eqref{3-9} and the boundary condition with respect to $ y $ in $ L^p(\Omega) $   and $ C^2(\partial\Omega) $, respectively, we can get $ \bar{H}_1(x,y)=\bar{H}_{1,y}(x)\in C(\Omega, C^{1,\gamma}(\overline{\Omega})) $ and thus $ \nabla_x\bar{H}_1(x,y)\in C(\Omega\times \Omega) $.

Similarly,  taking $ \nabla_y $ to both sides of \eqref{3-9}, we can check that $ \nabla_y\bar{H}_{1,y}(x)\in C(\Omega, C^{0,\gamma}(\overline{\Omega})) $, which implies that $ \nabla_y\bar{H}_1(x,y)\in C(\Omega\times \Omega) $, then $ \bar{H}_1 $ is a $ C^1 $ function over $ \Omega\times \Omega $. From \eqref{F_{1,y}} and \eqref{F_{2,y}}, we can prove that \eqref{exp of F_{1,y}} and \eqref{exp of F_{2,y}} hold. Finally, $ \bar{S}_K(x, x) =  \bar{H}_1(x,x) $ is clearly in $ C^1(\Omega) $.

\end{proof}

The argument that the function $x\to \bar{S}_K(x,x)\in C^1(\Omega)$ will play an important role for us to get the $ C^1-$dependance
of clustered solutions for the finite-dimensional variational reduction, see sections 4 and 5 below.

Let $ m> 1 $ be an integer. Now we give approximate solutions of \eqref{111} and define the admissible class $ \Lambda_{\varepsilon, m} $ for the parameter $ Z=(z_1,\cdots,z_m) $. For any $ \hat{x}\in \Omega, \hat{q}>0 $, we define
\begin{equation}\label{eq2-2}
V_{\delta, \hat{x}, \hat{q}}(x)=\begin{cases}
\hat{q}+\delta^{\frac{2}{p-1}}s_\delta^{-\frac{2}{p-1}}\phi\left(\frac{|T_{\hat{x}}(x-\hat{x})|}{s_\delta}\right),\ \ &|T_{\hat{x}}(x-\hat{x})|\le s_\delta,\\
\hat{q}\ln |T_{\hat{x}}(x-\hat{x})|/\ln s_\delta,\ \  &  |T_{\hat{x}}(x-\hat{x})|>s_\delta,
\end{cases}
\end{equation}
where $ \phi\in H^1_0(B_1(0)) $ satisfies (see, e.g., \cite{CLW})
\begin{equation*}
-\Delta\phi=\phi^p, \ \ \phi>0\ \ \text{in}\ B_1(0),
\end{equation*}
and $ s_\delta $ satisfies
\begin{equation}\label{201}
\delta^{\frac{2}{p-1}}s_\delta^{-\frac{2}{p-1}}\phi'(1)=\hat{q}/\ln s_\delta.
\end{equation}
Clearly, $ V_{\delta, \hat{x}, \hat{q}}\in C^1 $ is a solution of
\begin{equation}\label{eq2}
\begin{cases}
-\delta^2\text{div}(K(\hat{x})\nabla v)=  (v-\hat{q})^{p}_+,\ \ & \text{in}\  \mathbb{R}^2,\\
v=\hat{q},\ \ &\text{on} \ \{x\mid |T_{\hat{x}}(x-\hat{x})|=s_\delta\},
\end{cases}
\end{equation}
and  for $ \delta $ sufficiently small, \eqref{201} is uniquely solvable with
\begin{equation*}
\frac{s_\delta}{\delta|\ln\delta|^{\frac{p-1}{2}}}\to \left( \frac{|\phi'(1)|}{\hat{q}}\right) ^{\frac{p-1}{2}}\ \ \ \ \text{as}\ \delta\to0.
\end{equation*}
The Pohozaev identity implies
\begin{equation}\label{PI}
\int_{B_1(0)}\phi^{p+1}=\frac{\pi(p+1)}{2}|\phi'(1)|^2,\ \ \int_{B_1(0)}\phi^{p}= 2\pi|\phi'(1)|.
\end{equation}

Since $ V_{\delta, \hat{x}, \hat{q}} $ is not 0 on $ \partial \Omega $, we need to make a projection on $ H^1_0(\Omega). $ Note that the operator $ \text{div}(K(\hat{x})\nabla \cdot) $ in \eqref{eq2} is different from $ \text{div}(K(x)\nabla \cdot) $ appeared in \eqref{111}, we introduce a projection term $ H_{\delta, \hat{x}, \hat{q}} $ defined by
\begin{equation}\label{eq3}
\begin{cases}
-\text{div}(K(x)\nabla H_{\delta, \hat{x}, \hat{q}})=\text{div}((K(x)-K(\hat{x}))\nabla V_{\delta, \hat{x}, \hat{q}}),\ \ &\Omega,\\
H_{\delta, \hat{x}, \hat{q}}=-V_{\delta, \hat{x}, \hat{q}},\ \ &\partial\Omega.
\end{cases}
\end{equation}
Then  $ H_{\delta, \hat{x}, \hat{q}}\in W^{2,p}(\Omega)\subset  C^{1,\alpha}(\overline{\Omega}) $ for any $ p>1, \alpha\in(0,1) $. From lemma 3.2 in \cite{CW}, we have the following estimate of the difference between $ H_{\delta, \hat{x}, \hat{q}} $ and  $ -\frac{2\pi\hat{q}\sqrt{\det K(\hat{x})}}{\ln s_\delta}\bar{S}_K(\cdot,\hat{x}) $.

\begin{lemma}[lemma 3.2, \cite{CW}]\label{H estimate}
Define $ \zeta_{\delta, \hat{x}, \hat{q}}(x)=H_{\delta, \hat{x}, \hat{q}}(x)+\frac{2\pi\hat{q}\sqrt{\det K(\hat{x})}}{\ln s_\delta}\bar{S}_K(x,\hat{x}) $ for $ x\in\Omega $. Then for any $ p\in(1,2) $, there exists a constant $ C>0 $ independent of $ \delta $ such that
\begin{equation*}
||\zeta_{\delta, \hat{x}, \hat{q}}||_{C^{0,2-\frac{2}{p}}(\Omega)}\leq C\frac{s_\delta^{\frac{2}{p}-1}}{|\ln s_\delta|}.
\end{equation*}
\end{lemma}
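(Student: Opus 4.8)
\textbf{Proof proposal for Lemma \ref{H estimate}.} The plan is to show that $\zeta_{\delta,\hat{x},\hat{q}}$ solves an inhomogeneous divergence-form elliptic equation with zero Dirichlet data whose right-hand side is concentrated in a ball of radius $\sim s_\delta$ and has small $L^p$ norm, and then to close the estimate with the $W^{2,p}$ Calder\'on--Zygmund estimate for $-\text{div}(K(x)\nabla\cdot)$ together with the embedding $W^{2,p}(\Omega)\hookrightarrow C^{0,2-\frac{2}{p}}(\overline{\Omega})$, which is available precisely because $p\in(1,2)$.

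First I would rewrite $\zeta_{\delta,\hat{x},\hat{q}}$ by means of Lemma \ref{Green expansion}. Putting $b_\delta:=\frac{2\pi\hat{q}}{\ln s_\delta}$, one has $\frac{2\pi\hat{q}\sqrt{\det K(\hat{x})}}{\ln s_\delta}\bar{S}_K(x,\hat{x})=b_\delta\sqrt{\det K(\hat{x})}\,G_K(x,\hat{x})-b_\delta\,\Gamma(T_{\hat{x}}(x-\hat{x}))$, and since $\Gamma(z)=-\frac{1}{2\pi}\ln|z|$, in the outer region $\{|T_{\hat{x}}(x-\hat{x})|>s_\delta\}$ one has $V_{\delta,\hat{x},\hat{q}}(x)=-b_\delta\,\Gamma(T_{\hat{x}}(x-\hat{x}))$. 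Because $s_\delta\to0$, for $\delta$ small $\partial\Omega$ lies in this outer region, so $H_{\delta,\hat{x},\hat{q}}=-V_{\delta,\hat{x},\hat{q}}=b_\delta\Gamma(T_{\hat{x}}(\cdot-\hat{x}))$ and $G_K(\cdot,\hat{x})=0$ there, whence $\zeta_{\delta,\hat{x},\hat{q}}=0$ on $\partial\Omega$. For the equation in $\Omega$ I combine three facts: the defining equation \eqref{eq3}; the Green identity $-\text{div}(K(x)\nabla_xG_K(x,\hat{x}))=\delta_{\hat{x}}$; and the fundamental-solution identity $-\text{div}\big(K(\hat{x})\nabla\Gamma(T_{\hat{x}}(\cdot-\hat{x}))\big)=\sqrt{\det K(\hat{x})}\,\delta_{\hat{x}}$, in which I split $K(x)=K(\hat{x})+(K(x)-K(\hat{x}))$. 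The Dirac masses cancel, and one obtains
\[
-\text{div}\big(K(x)\nabla\zeta_{\delta,\hat{x},\hat{q}}\big)=\text{div}\Big((K(x)-K(\hat{x}))\nabla\big(V_{\delta,\hat{x},\hat{q}}+b_\delta\Gamma(T_{\hat{x}}(x-\hat{x}))\big)\Big)=:g_\delta\quad\text{in }\Omega.
\]
The decisive observation is that $V_{\delta,\hat{x},\hat{q}}+b_\delta\Gamma(T_{\hat{x}}(\cdot-\hat{x}))\equiv0$ on $\{|T_{\hat{x}}(x-\hat{x})|>s_\delta\}$, so $g_\delta$ is supported in $B_{Cs_\delta}(\hat{x})$, where $C$ depends only on the uniform ellipticity bound for $\|T_{\hat{x}}^{-1}\|$.

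Next I estimate $\|g_\delta\|_{L^p(\Omega)}$ for $p\in(1,2)$. Inside the ball $V_{\delta,\hat{x},\hat{q}}-\hat{q}=\delta^{\frac{2}{p-1}}s_\delta^{-\frac{2}{p-1}}\phi(|T_{\hat{x}}(x-\hat{x})|/s_\delta)$, and since $\phi$ is smooth with bounded derivatives up to the boundary of $B_1(0)$, relation \eqref{201} (which gives $\delta^{\frac{2}{p-1}}s_\delta^{-\frac{2}{p-1}}=\hat{q}/(\phi'(1)\ln s_\delta)$) yields $|\nabla^j V_{\delta,\hat{x},\hat{q}}|\le C|\ln s_\delta|^{-1}s_\delta^{-j}$ there for $j=1,2$; on the other hand $|\nabla^j(b_\delta\Gamma(T_{\hat{x}}(\cdot-\hat{x})))|\le C|\ln s_\delta|^{-1}|x-\hat{x}|^{-j}$. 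Expanding $g_\delta=\sum_{i,j}\partial_{x_i}K_{ij}(x)\,\partial_{x_j}(V_{\delta,\hat{x},\hat{q}}+b_\delta\Gamma)+\sum_{i,j}(K_{ij}(x)-K_{ij}(\hat{x}))\,\partial_{x_ix_j}(V_{\delta,\hat{x},\hat{q}}+b_\delta\Gamma)$ and using smoothness of $K$ — in particular the first-order vanishing $|K_{ij}(x)-K_{ij}(\hat{x})|\le C|x-\hat{x}|$, which turns the non-integrable singularity $|x-\hat{x}|^{-2}$ of $\nabla^2\Gamma$ into the integrable $|x-\hat{x}|^{-1}$ — I get $|g_\delta(x)|\le\frac{C}{|\ln s_\delta|}\big(s_\delta^{-1}+|x-\hat{x}|^{-1}\big)\mathbf{1}_{B_{Cs_\delta}(\hat{x})}(x)$. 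Since $\int_{B_{Cs_\delta}}|x-\hat{x}|^{-p}dx\le Cs_\delta^{2-p}$ and $\int_{B_{Cs_\delta}}s_\delta^{-p}dx\le Cs_\delta^{2-p}$ for $p<2$, both pieces have $L^p$ norm $O(s_\delta^{\frac{2}{p}-1})$, hence $\|g_\delta\|_{L^p(\Omega)}\le\frac{C}{|\ln s_\delta|}s_\delta^{\frac{2}{p}-1}$.

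Finally, $\zeta_{\delta,\hat{x},\hat{q}}$ solves a uniformly elliptic divergence-form equation with smooth coefficients, zero boundary data, and right-hand side $g_\delta\in L^p(\Omega)$, so $\|\zeta_{\delta,\hat{x},\hat{q}}\|_{W^{2,p}(\Omega)}\le C\|g_\delta\|_{L^p(\Omega)}$; the embedding $W^{2,p}(\Omega)\hookrightarrow C^{0,2-\frac{2}{p}}(\overline{\Omega})$ (valid since $2-\frac{2}{p}\in(0,1)$ when $p\in(1,2)$) then gives $\|\zeta_{\delta,\hat{x},\hat{q}}\|_{C^{0,2-\frac{2}{p}}(\Omega)}\le C\frac{s_\delta^{\frac{2}{p}-1}}{|\ln s_\delta|}$, which is the claim. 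The main obstacle is the $L^p$ estimate of the term $(K(x)-K(\hat{x}))\,\partial_{x_ix_j}(b_\delta\Gamma(T_{\hat{x}}(\cdot-\hat{x})))$: the Hessian of $\Gamma$ is a Calder\'on--Zygmund kernel that is only just outside $L^p$ for $p>1$, so one cannot afford the crude bound $|K(x)-K(\hat{x})|\le Cs_\delta$ on the whole ball; retaining the sharp Lipschitz bound $|K(x)-K(\hat{x})|\le C|x-\hat{x}|$ is exactly what tames the singularity and, at the same time, produces the correct power $s_\delta^{\frac{2}{p}-1}$.
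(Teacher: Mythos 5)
Your proposal is correct and follows essentially the same route as the paper's proof: derive the divergence-form equation satisfied by $\zeta_{\delta,\hat{x},\hat{q}}$ with zero Dirichlet data, show its right-hand side is concentrated near $\hat{x}$ with pointwise bound of order $\frac{1}{|\ln s_\delta|}\left(s_\delta^{-1}+|x-\hat{x}|^{-1}\right)$ so that its $L^p$ norm is $O\!\left(s_\delta^{\frac{2}{p}-1}/|\ln s_\delta|\right)$ for $p\in(1,2)$, and conclude by the $W^{2,p}$ elliptic estimate and the embedding $W^{2,p}\hookrightarrow C^{0,2-\frac{2}{p}}$. The only (cosmetic) difference is how the right-hand side is identified: you substitute $\bar{S}_K=G_K-\sqrt{\det K(\hat{x})}^{-1}\Gamma(T_{\hat{x}}(\cdot-\hat{x}))$ and cancel the Dirac masses to obtain the single compactly supported term $\operatorname{div}\bigl((K(x)-K(\hat{x}))\nabla(V_{\delta,\hat{x},\hat{q}}+b_\delta\Gamma)\bigr)$, whereas the paper keeps the $\bar{S}_K$ term and bounds $\operatorname{div}(K\nabla\bar{S}_K)$ directly from the construction of the Green's function; both yield the same estimates.
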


Using \eqref{eq2-2}, the definition of $ H_{\delta, \hat{x}, \hat{q}} $ in \eqref{eq3} and the classical $ L^p $-theory of elliptic equations, one computes directly that
\begin{equation}\label{3-001}
||H_{\delta, \hat{x}, \hat{q}}||_{W^{2,p}(\Omega)}\leq
\begin{cases}
\frac{C}{\varepsilon^{1-\frac{2}{p}}|\ln\varepsilon|},\ \ &p>2,\\
C,\ \ &p=2,\\
\frac{C}{|\ln\varepsilon|},\ \ &1\leq p<2.
\end{cases}
\end{equation}

Let $ Z=(z_1,\cdots,z_m)\in\Omega^{(m)} $. Since $ x_{0} $ is a strict local maximum point  of $ q^2\sqrt{\det(K)} $ in $ \Omega $,   we define the admissible set for $ Z $ as follows:
\begin{equation}\label{admis set}
\Lambda_{\varepsilon, m}=\{Z=(z_1,\cdots, z_m)\in\Omega^{(m)}\mid z_i\in B_{\bar{\rho}}(x_{0}), \min_{i\neq j}|z_i-z_j|\geq |\ln\varepsilon|^{-M}, \  \forall 1\leq i\neq j\leq m\},
\end{equation}
where $ M=m^2+1. $ Clearly by \eqref{admis set},
\begin{equation}\label{3-003}
G_K(z_i,z_j)\leq C|\ln\frac{1}{|z_i-z_j|}|\leq C\ln|\ln\varepsilon|,\ \ \ \ Z\in \Lambda_{\varepsilon, m}.
\end{equation}

In the following, we will construct solutions of \eqref{111} being of the form
\begin{equation}\label{solu config}
v_\delta= V_{\delta, Z}+\omega_\delta=\sum_{j=1}^mV_{\delta, Z,j}+\omega_{\delta,Z}=\sum_{j=1}^m(V_{\delta, z_j, \hat{q}_j}+H_{\delta, z_j, \hat{q}_j})+\omega_{\delta,Z},
\end{equation}
where $ Z=(z_1,\cdots,z_m)\in \Lambda_{\varepsilon, m} $, $ \sum_{j=1}^mV_{\delta, Z,j} $ is the main term and $ \omega_{\delta,Z}$ is an error term. The choice of $ \hat{q}_j $ will be made later on. From \eqref{111}, one computes directly that
\begin{equation*}
\begin{split}
0=&\sum_{j=1}^m-\delta^2\text{div}(K(x)\nabla (V_{\delta, z_j, \hat{q}_j}+H_{\delta, z_j, \hat{q}_j}))-\delta^2\text{div}(K(x)\nabla \omega_{\delta,Z})
-\left (\sum_{j=1}^mV_{\delta, Z,j}+\omega_{\delta,Z}-q\right )^{p}_+\\
=&-\sum_{j=1}^m\delta^2\text{div}(K(z_j)\nabla V_{\delta, z_j, \hat{q}_j})-\sum_{j=1}^m\delta^2\text{div}((K(x)-K(z_j))\nabla V_{\delta, z_j, \hat{q}_j})\\
&-\sum_{j=1}^m\delta^2\text{div}(K(x)\nabla H_{\delta, z_j, \hat{q}_j})+\left (-\delta^2\text{div}(K(x)\nabla \omega_{\delta,Z})-p\left (\sum_{j=1}^mV_{\delta, Z,j}-q\right )^{p-1}_+\omega_{\delta,Z} \right )\\
&-\left (\left (\sum_{j=1}^mV_{\delta, Z,j}+\omega_{\delta,Z}-q\right )^{p}_+-p\left (\sum_{j=1}^mV_{\delta, Z,j}-q\right )^{p-1}_+\omega_{\delta,Z}\right )\\
=&-l_\delta+L_\delta \omega_{\delta,Z}- R_\delta(\omega_{\delta,Z}),
\end{split}
\end{equation*}
where
\begin{equation*}
l_\delta:=\left (\sum_{j=1}^mV_{\delta, Z,j}-q\right )^{p}_+-\sum_{j=1}^m(V_{\delta, z_j, \hat{q}_j}-\hat{q}_j)^{p}_+,
\end{equation*}
$ L_\delta $ is the linearized operator of \eqref{111} at $ \sum_{j=1}^mV_{\delta, Z,j} $ defined by
\begin{equation*}
L_\delta \omega:=-\delta^2\text{div}(K(x)\nabla \omega)- p\left (\sum_{j=1}^mV_{\delta, Z,j}-q\right )^{p-1}_+\omega,
\end{equation*}
and $ R_\delta(\omega_{\delta,Z}) $ is the high-order error term defined by
\begin{equation*}
R_\delta(\omega_{\delta,Z}):=\left (\sum_{j=1}^mV_{\delta, Z,j}+\omega_{\delta,Z}-q\right )^{p}_+-\left (\sum_{j=1}^mV_{\delta, Z,j}-q\right )^{p}_+-p\left (\sum_{j=1}^mV_{\delta, Z,j}-q\right )^{p-1}_+\omega_{\delta,Z}.
\end{equation*}
Thus it suffices to solve the following equation
\begin{equation}\label{3-03}
L_\delta \omega=l_\delta+ R_\delta(\omega).
\end{equation}

Now we choose parameters $ \hat{q}_j $ suitably to make the error term $ \omega_{\delta,Z}$ as small as possible. For any $ Z\in \Lambda_{\varepsilon, m}, $ let $ \hat{q}_i=\hat{q}_{\delta,i}(Z) $, $ i=1,\cdots,m $ be such that
\begin{equation}\label{q_i choice}
\hat{q}_i=q(z_i)+\frac{2\pi\hat{q}_i\sqrt{\det K(z_i)}}{\ln s_{\delta,i}}\bar{S}_K(z_i, z_i)+\sum_{j\neq i}\frac{2\pi\hat{q}_j\sqrt{\det K(z_j)}}{\ln s_{\delta,j}}G_K(z_i, z_j),
\end{equation}
where $ s_{\delta,i} $ satisfies for $ i=1,\cdots,m $
\begin{equation*}
\delta^{\frac{2}{p-1}}s_{\delta,i}^{-\frac{2}{p-1}}\phi'(1)=\hat{q}_i/\ln s_{\delta,i}.
\end{equation*}

From the Poincar$\acute{\text{e}}$ -- Miranda Theorem (see \cite{Ku}), for any $ \delta $ sufficiently small   there exists $ \hat{q}_{\delta,i}(Z) $ satisfying \eqref{q_i choice}. Moreover, by Lemma \ref{Green expansion2} one computes directly that 
\begin{equation}\label{2000}
\hat{q}_i=q(z_i)+O\left (\frac{\ln|\ln\varepsilon|}{|\ln\varepsilon|}\right );\ \ \frac{1}{\ln\frac{1}{s_{\delta,i}}}=\frac{1}{\ln\frac{1}{\varepsilon}}+O\left( \frac{\ln|\ln\varepsilon|}{|\ln\varepsilon|^2}\right).
\end{equation}

By the choice of $ \Lambda_{\varepsilon, m} $ in \eqref{admis set} and  $ \hat{q}_{\delta,j} $ in \eqref{q_i choice}, we claim that for any $ Z\in \Lambda_{\varepsilon, m} $, $ \gamma\in(0,1), L>1 $ and $ x\in B_{Ls_{\delta,i}}(z_i) $
\begin{equation}\label{203}
\begin{split}
\sum_{j=1}^mV_{\delta, Z,j}(x)-q(x)=V_{\delta, z_i, \hat{q}_{\delta,i}}(x)-\hat{q}_{\delta,i}+O\left( \frac{\varepsilon^\gamma}{|\ln\varepsilon|}\right).
\end{split}
\end{equation}
Indeed, for $ x\in B_{Ls_{\delta,i}}(z_i) $
\begin{equation*}
\begin{split}
V_{\delta,Z,i}(x)-q(x)
=&V_{\delta, z_i, \hat{q}_{\delta,i}}(x)+H_{\delta, z_i, \hat{q}_{\delta,i}}(x)-q(x)\\
=&V_{\delta, z_i, \hat{q}_{\delta,i}}(x)-q(z_i)-\frac{2\pi\hat{q}_{\delta,i}\sqrt{\det K(z_i)}}{\ln s_{\delta,i}}\bar{S}_K(x, z_i)+O(s_{\delta,i})+O\left( \frac{s_{\delta,i}^{\gamma}}{|\ln s_{\delta,i}|}\right) \\
=&V_{\delta, z_i, \hat{q}_{\delta,i}}(x)-q(z_i)-\frac{2\pi\hat{q}_{\delta,i}\sqrt{\det K(z_i)}}{\ln s_{\delta,i}}\bar{S}_K(z_i, z_i)+ O\left( \frac{\varepsilon^\gamma}{|\ln\varepsilon|}\right).
\end{split}
\end{equation*}
For any $ j\neq i $, since $ |\ln\varepsilon|^{-M}>2Ls_{\delta,i} $ for $ \varepsilon $ sufficiently small, one has
\begin{equation*}
\begin{split}
V_{\delta,Z,j}(x)=&V_{\delta, z_j, \hat{q}_{\delta,j}}(x)+H_{\delta, z_j, \hat{q}_{\delta,j}}(x)\\
=&\frac{\hat{q}_{\delta,j}}{\ln s_{\delta,j}}\ln |T_{z_j}(x-z_j)|-\frac{2\pi\hat{q}_{\delta,j}\sqrt{\det K(z_j)}}{\ln s_{\delta,j}}\bar{S}_K(x, z_j)+O\left(  \frac{s_{\delta,j}^{\gamma}}{|\ln s_{\delta,j}|}\right)\\
=&-\frac{2\pi\hat{q}_{\delta,j}\sqrt{\det K(z_j)}}{\ln s_{\delta,j}}G_K(x, z_j)+O\left(  \frac{s_{\delta,j}^{\gamma}}{|\ln s_{\delta,j}|}\right) \\
=&-\frac{2\pi\hat{q}_{\delta,j}\sqrt{\det K(z_j)}}{\ln s_{\delta,j}}G_K(z_i, z_j)+O\left( \frac{\varepsilon^\gamma}{|\ln\varepsilon|}\right),
\end{split}
\end{equation*}
where we have used  Lemma \ref{H estimate} and the fact that  for $ x\in B_{Ls_{\delta,i}}(z_i) $
\begin{equation*}
G_K(x, z_j)=G_K(z_i, z_j)+O(|\nabla_{z_i}G_K(z_i, z_j)(x-z_j)|)=G_K(z_i, z_j)+O(\varepsilon|\ln\varepsilon|^M).
\end{equation*}
Adding up the above inequalities and using \eqref{q_i choice}, we get \eqref{203}.

Using the definition of $ V_{\delta, z_i, \hat{q}_{\delta,i}} $, we obtain

\begin{equation}\label{200}
\begin{split}
\frac{\partial V_{\delta, z_i, \hat{q}_{\delta,i}}(x)}{\partial x_h}=\begin{cases}
\frac{1}{s_{\delta,i}}(\frac{\delta}{s_{\delta,i}})^{\frac{2}{p-1}}\phi'(\frac{|T_{z_i}(x-z_i)|}{s_{\delta,i}})\frac{(T_{z_i})_h^t\cdot T_{z_i}(x-z_i)}{|T_{z_i}(x-z_i)|},\ \ &|T_{z_i}(x-z_i)|\leq s_{\delta,i},\\
\frac{\hat{q}_{\delta,i}}{\ln s_{\delta,i}}\frac{(T_{z_i})_h^t\cdot T_{z_i}(x-z_i)}{|T_{z_i}(x-z_i)|^2},\ \ &|T_{z_i}(x-z_i)|> s_{\delta,i},
\end{cases}
\end{split}
\end{equation}
where $ (T_{z_i})_h^t $ is the $h$-th row of $ (T_{z_i})^t $.

At the end of this section, 
we give some estimates of approximate solutions $ V_{\delta,Z} $, which will be frequently used in the following sections.
\begin{lemma}\label{lemA-5}
	Let $\gamma\in(0,1)$. There exists a constant $ L > 1 $ such that for $ \varepsilon $ small
	\begin{equation*}
	V_{\delta,Z}-q>0,\ \ \  \ \text{in}\ \   \cup_{j=1}^m\left( T_{z_j}^{-1}B_{\left( 1-L \varepsilon^\gamma \right) s_{\delta,j}}(0)+z_j\right),
	\end{equation*}
	\begin{equation*}
	V_{\delta,Z}-q<0,\ \ \  \ \text{in}\ \  \Omega\backslash\cup_{j=1}^m\left( T_{z_j}^{-1}B_{Ls_{\delta,j}}(0)+z_j\right).
	\end{equation*}
	
\end{lemma}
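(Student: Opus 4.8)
\emph{Proof strategy.} Both inclusions will be deduced from the pointwise expansion \eqref{203} --- extended, by the same argument, to a neighbourhood of $z_i$ of radius up to $\varepsilon^{1/2}$ --- together with the Green representation
\begin{equation*}
V_{\delta,Z}(x)=\sum_{j=1}^m\frac{2\pi\hat q_{\delta,j}\sqrt{\det K(z_j)}}{|\ln s_{\delta,j}|}\,G_K(x,z_j)+o\!\left(\tfrac{1}{|\ln\varepsilon|}\right),
\end{equation*}
valid whenever $x$ lies outside every core $\{|T_{z_j}(x-z_j)|\le s_{\delta,j}\}$ (it follows from \eqref{eq2-2}, \eqref{eq3}, Lemma \ref{H estimate} and the decomposition of $G_K$ in Lemma \ref{Green expansion}). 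Here $L>1$ is the constant we are constructing; it is fixed only at the very end, large enough to dominate several $\varepsilon$-independent constants. Throughout I use, via \eqref{2000}, that $0<c_0\le\hat q_{\delta,j}\le C_0$ and $|\ln s_{\delta,j}|\le C_0|\ln\varepsilon|$ for small $\varepsilon$; that $q$ is Lipschitz with $m_q:=\min_{\overline\Omega}q>0$; that $\phi$ is radially decreasing on $[0,1]$ with $\phi(1)=0$, $\phi'(1)<0$, hence $\phi(1-t)\ge c_\phi t$ for $0\le t\le t_0$; and that $\delta^{2/(p-1)}s_{\delta,j}^{-2/(p-1)}|\phi'(1)|=\hat q_{\delta,j}/|\ln s_{\delta,j}|$ by \eqref{201}. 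All estimates below are uniform for $Z\in\Lambda_{\varepsilon,m}$.

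\emph{First inclusion.} If $|T_{z_j}(x-z_j)|<(1-L\varepsilon^\gamma)s_{\delta,j}$ then $|x-z_j|\le Cs_{\delta,j}$, so \eqref{203} applies (with a constant independent of $L$), giving $V_{\delta,Z}(x)-q(x)=V_{\delta,z_j,\hat q_{\delta,j}}(x)-\hat q_{\delta,j}+O(\varepsilon^\gamma/|\ln\varepsilon|)$. Inside the core, by \eqref{eq2-2} and monotonicity of $\phi$,
\begin{equation*}
V_{\delta,z_j,\hat q_{\delta,j}}(x)-\hat q_{\delta,j}=\delta^{2/(p-1)}s_{\delta,j}^{-2/(p-1)}\phi\big(|T_{z_j}(x-z_j)|/s_{\delta,j}\big)\ge c_\phi L\varepsilon^\gamma\,\delta^{2/(p-1)}s_{\delta,j}^{-2/(p-1)}=\frac{c_\phi}{|\phi'(1)|}\,\frac{L\varepsilon^\gamma\hat q_{\delta,j}}{|\ln s_{\delta,j}|}\ge\frac{c_1 L\varepsilon^\gamma}{|\ln\varepsilon|},
\end{equation*}
so $V_{\delta,Z}(x)-q(x)>0$ once $L$ exceeds a fixed multiple of the error constant in \eqref{203}.

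\emph{Second inclusion.} Let $x$ satisfy $|T_{z_j}(x-z_j)|\ge Ls_{\delta,j}$ for all $j$ (so $x$ is outside every core); choose a nearest centre $z_i$ and set $d=|x-z_i|$. By the separation in \eqref{admis set}, for every $j\neq i$ one has $|x-z_j|\ge d$ and $|x-z_j|\ge\tfrac12|\ln\varepsilon|^{-M}$. If $d\le\varepsilon^{1/2}$, the proof of \eqref{203} still goes through --- the Taylor/H\"older remainders of $q,\bar S_K$ and the substitution $G_K(x,z_j)\mapsto G_K(z_i,z_j)$ (of size $O(d\,|\ln\varepsilon|^M)$) are $o(1/|\ln\varepsilon|)$, and the defining relation \eqref{q_i choice} of $\hat q_{\delta,i}$ cancels the leading interaction term --- so
\begin{equation*}
V_{\delta,Z}(x)-q(x)=V_{\delta,z_i,\hat q_{\delta,i}}(x)-\hat q_{\delta,i}+o\!\left(\tfrac{1}{|\ln\varepsilon|}\right)=-\frac{\hat q_{\delta,i}}{|\ln s_{\delta,i}|}\ln\frac{|T_{z_i}(x-z_i)|}{s_{\delta,i}}+o\!\left(\tfrac{1}{|\ln\varepsilon|}\right)\le-\frac{\hat q_{\delta,i}\ln L}{|\ln s_{\delta,i}|}+o\!\left(\tfrac{1}{|\ln\varepsilon|}\right)<0.
\end{equation*}
If $d>\varepsilon^{1/2}$, use the Green representation: for $j\neq i$, $|x-z_j|\ge\tfrac12|\ln\varepsilon|^{-M}$ gives $G_K(x,z_j)\le C\ln|\ln\varepsilon|$ (as in \eqref{3-003}), so that term is $o(1)$; for $j=i$, $G_K(x,z_i)\le C(1+|\ln d|)$ with $d>\varepsilon^{1/2}$ forces $\frac{2\pi\hat q_{\delta,i}\sqrt{\det K(z_i)}}{|\ln s_{\delta,i}|}G_K(x,z_i)\le\tfrac12\hat q_{\delta,i}+o(1)$ when $d<\rho_0$ and $o(1)$ when $d\ge\rho_0$ (a small fixed $\rho_0$ to be chosen). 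Hence $V_{\delta,Z}(x)\le\tfrac12\hat q_{\delta,i}+o(1)$, while $q(x)\ge q(z_i)-Cd\ge q(z_i)-C\rho_0$; since $\hat q_{\delta,i}=q(z_i)+o(1)$ and $q(z_i)\ge m_q>0$, taking $\rho_0$ small gives $V_{\delta,Z}(x)-q(x)\le-\tfrac14 q(z_i)+o(1)<0$.

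Taking $L$ to be the larger of the threshold from the first inclusion and $2$ produces one value of $L$ that works in both parts, which proves the lemma. \emph{The delicate point} is the transition annulus $Ls_{\delta,i}\le|T_{z_i}(x-z_i)|\le\varepsilon^{1/2}$ in the second inclusion: there the negative gain $-\hat q_{\delta,i}\ln L/|\ln s_{\delta,i}|$ is only of order $1/|\ln\varepsilon|$, so the crude bound $\sum_{j\neq i}|V_{\delta,Z,j}(x)|=O(\ln|\ln\varepsilon|/|\ln\varepsilon|)$ for the other bubbles' interaction is too lossy; one must keep the exact cancellation encoded in \eqref{q_i choice}, i.e.\ run the argument behind \eqref{203} on a ball larger than the one for which it was stated.
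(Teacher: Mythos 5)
Your proof is correct and follows essentially the same strategy as the paper's: the first inclusion via \eqref{203} together with the behaviour of $\phi$ near $\partial B_1(0)$ and \eqref{201}, and the second inclusion by rerunning the expansion behind \eqref{203} on a mesoscale ball (the paper uses radius $s_{\delta,j}^{\tau}$ where you use $\varepsilon^{1/2}$) so that the cancellation from \eqref{q_i choice} yields the decisive $-\hat q_{\delta,j}\ln L/|\ln s_{\delta,j}|$ term, plus a crude far-field bound. The only cosmetic difference is in the far field, where the paper bounds $V_{\delta,z_j,\hat q_{\delta,j}}(x)\le \tau\hat q_{\delta,j}$ directly from the defining formula \eqref{eq2-2} instead of passing through the Green-function representation; both reduce to the same comparison of an $o(1)$ (or $O(\tau)$) quantity against $-\min_{\overline\Omega} q<0$.
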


\begin{proof}
If $ |T_{z_j}(x-z_j)|\leq \left( 1-L \varepsilon^\gamma \right) s_{\delta,j}   $, then by \eqref{203} and $ \phi'(1)<0 $ we have
	\begin{equation*}
	\begin{split}
	V_{\delta,Z}(x)-q(x)=&V_{\delta,z_j,\hat{q}_{\delta,j}}(x)-\hat{q}_{\delta,j}+O\left( \frac{ \varepsilon^\gamma}{|\ln\varepsilon|}\right) \\
	= &\frac{\hat{q}_{\delta,j}}{|\phi'(1)|\ln\frac{1}{s_{\delta,j}}}\phi\left( \frac{|T_{z_j}(x-z_j)|}{s_{\delta,j}}\right) +O\left( \frac{\varepsilon^\gamma}{|\ln\varepsilon|}\right) >0,
	\end{split}
	\end{equation*}
	if $ L $ is sufficiently large.

	On the other hand, if $ \tau>0 $ small and $ |T_{z_j}(x-z_j)|\geq s_{\delta,j}^\tau $ for any $ j=1,\cdots,m $, then by the definition of $ V_{\delta,z_j,\hat{q}_{\delta,j}} $ and Lemma \ref{H estimate}
	\begin{equation*}
	\begin{split}
	V_{\delta,Z}(x)-q(x)=&\sum_{j=1}^m\left( V_{\delta,z_j,\hat{q}_{\delta,j}}(x)+H_{\delta,z_j,\hat{q}_{\delta,j}}(x)\right) -q(x)\\
	\leq &\sum_{j=1}^m\frac{\hat{q}_{\delta,j}\ln s_{\delta,j}^\tau}{\ln s_{\delta,j}}-C\\
	\leq &\tau \sum_{j=1}^m \hat{q}_{\delta,j}-C<0.
	\end{split}
	\end{equation*}
	If $ Ls_{\delta,j}\leq |T_{z_j}(x-z_j)|\leq s_{\delta,j}^\tau $, then by \eqref{q_i choice} for $ L $ sufficiently large
	\begin{equation*}
	\begin{split}
	V_{\delta,Z}&(x)-q(x)\\
	=&V_{\delta,z_j,\hat{q}_{\delta,j}}(x)+H_{\delta,z_j,\hat{q}_{\delta,j}}(x)-q(x)+\sum_{i\neq j}\left(V_{\delta, z_i, \hat{q}_{\delta,i}}(x)+H_{\delta, z_i, \hat{q}_{\delta,i}}(x)\right)\\
	=&V_{\delta, z_j, \hat{q}_{\delta,j}}(x)-q(z_j)-\frac{2\pi\hat{q}_{\delta,j}\sqrt{\det K(z_j)}}{\ln s_{\delta,j}}\bar{S}_K(x, z_j)-\sum_{i\neq j}\frac{2\pi\hat{q}_{\delta,i}\sqrt{\det K(z_i)}}{\ln s_{\delta,i}}G_K(x, z_i)+O(s_{\delta,j}^\tau)\\
	=&V_{\delta,z_j,\hat{q}_{\delta,j}}(x)-q(z_j)-\frac{2\pi\hat{q}_{\delta,j}\sqrt{\det K(z_j)}}{\ln s_{\delta,j}}\bar{S}_K(z_j, z_j)-\sum_{i\neq j}\frac{2\pi\hat{q}_{\delta,i}\sqrt{\det K(z_i)}}{\ln s_{\delta,i}}G_K(z_j, z_i)\\
\,\,&+O\left( \frac{\varepsilon^{\tau\gamma}}{|\ln\varepsilon|}\right) \\
	=&V_{\delta,z_j,\hat{q}_{\delta,j}}(x)-\hat{q}_{\delta,j}+O\left( \frac{\varepsilon^{\tau\gamma}}{|\ln\varepsilon|}\right) \\
	\leq &-\frac{\hat{q}_{\delta,j}\ln L}{\ln\frac{1}{s_{\delta,j}}}+O\left( \frac{\varepsilon^{\tau\gamma}}{|\ln\varepsilon|}\right) \\
<&0.
	\end{split}
	\end{equation*}

\end{proof}
\section{The reduction}
In this section  we consider the solvability of  a linear problem related to the linearized operator $ L_\delta $ for \eqref{111} at the approximate solution $ \sum_{j=1}^mV_{\delta, Z,j} $.

Let us consider the following equation
\begin{equation}\label{eq5}
-\Delta w=w^p_+,\ \ \text{in}\ \mathbb{R}^2.
\end{equation}
The unique $ C^1 $ solution is
\begin{equation*}
w(x)=\begin{cases}
\phi(x),\ \ &|x|\leq 1,\\
\phi'(1)\ln|x|,\ \ &|x|> 1.
\end{cases}
\end{equation*}
By the classical theory for elliptic equations, $ w\in C^{2,\alpha}(\mathbb{R}^2) $ for any $ \alpha\in(0,1) $. The linearized equation of \eqref{eq5} at $ w $ is
\begin{equation}\label{limit eq}
-\Delta v-pw^{p-1}_+v=0, \ \ v\in L^{\infty}(\mathbb{R}^2).
\end{equation}
Clearly, $ \frac{\partial w}{\partial x_h} $ $(h=1,2) $  are solutions of \eqref{limit eq}. It follows from \cite{DY} (see also \cite{CLW}) that
\begin{proposition}[Non-degeneracy]\label{Non-degenerate}
$ w $ is non-degenerate, i.e., the kernel of the linearized equation \eqref{limit eq} is $$ span\left \{\frac{\partial w}{\partial x_1}, \frac{\partial w}{\partial x_2}\right \}. $$
\end{proposition}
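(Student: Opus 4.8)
The plan is to reduce \eqref{limit eq} to a family of second--order ODEs indexed by the angular Fourier mode and to show that only the mode $k=1$ contributes bounded elements. Since $\phi'(1)<0$ we have $w<0$ outside $B_1(0)$, so $w_+^{p-1}=\phi^{p-1}\mathbf{1}_{B_1(0)}$ and \eqref{limit eq} becomes $-\Delta v=p\,\phi^{p-1}\mathbf{1}_{B_1(0)}\,v$; hence a bounded solution $v$ is smooth in $B_1(0)$, harmonic in $\mathbb{R}^2\setminus\overline{B_1(0)}$, and $C^1$ across $\partial B_1(0)$. Expanding $v(r,\theta)=\sum_{k\ge 0}\big(a_k(r)\cos k\theta+b_k(r)\sin k\theta\big)$, each coefficient $u=u_k$ is bounded and solves $-u''-\tfrac1r u'+\tfrac{k^2}{r^2}u=p\,\phi^{p-1}\mathbf{1}_{\{r<1\}}u$ with $u$ bounded at $r=0$ (so $u\sim c\,r^{k}$ there) and bounded as $r\to\infty$; consequently on $(1,\infty)$ one has $u\equiv a$ when $k=0$ and $u=\beta r^{-k}$ when $k\ge 1$. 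Throughout I would use that $\phi$ is radial and radially decreasing, that $\phi'(1)<0$, and that the equation for $\phi$ evaluated at $r=1$ gives $\phi''(1)=-\phi'(1)$.

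For $k=0$ the interior equation has a one--dimensional space of solutions bounded at the origin, spanned by $Z:=\tfrac{2}{p-1}\phi+r\phi'$: indeed $w_\lambda(x):=\lambda^{2/(p-1)}w(\lambda x)$ solves \eqref{eq5}, so $\partial_\lambda w_\lambda\big|_{\lambda=1}=\tfrac{2}{p-1}w+x\cdot\nabla w$ lies in the kernel of $-\Delta-pw_+^{p-1}$ and its bounded, nonzero radial profile on $B_1(0)$ is $Z$. Matching value and derivative at $r=1$ against the exterior constant and using $\phi(1)=0$, $\phi''(1)=-\phi'(1)$ forces the coefficient of $Z$ to vanish, because $Z'(1)=\tfrac{2}{p-1}\phi'(1)\ne 0$; hence $a_0\equiv 0$. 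For $k=1$ the bounded interior solution is a multiple of $\phi'$ and the bounded exterior one a multiple of $1/r$, and the identities $\phi'(1)=\phi'(1)$, $\phi''(1)=-\phi'(1)$ show these glue in $C^1$ for every constant, so the $k=1$ part of $v$ is exactly a linear combination of $\partial_{x_1}w$ and $\partial_{x_2}w$.

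The heart of the matter is $k\ge 2$. Set $\psi:=-\phi'$, which is positive on $(0,1)$, solves the $k=1$ interior equation, and satisfies $\psi(1)=-\phi'(1)>0$, $\psi'(1)=\phi'(1)$. For a nontrivial interior solution $u=u_k$ bounded at $0$, normalized so $u\sim c\,r^{k}$ with $c>0$, I would study $W(r):=r\big(u'\psi-u\psi'\big)$; a direct computation from the two ODEs gives $W'(r)=\tfrac{k^2-1}{r}\,u(r)\psi(r)$ and $W(0^+)=0$. If $u$ had a first zero at some $r_0\le 1$, then $W(r_0)>0$ by integrating $W'>0$, whereas $W(r_0)=r_0\,u'(r_0)\psi(r_0)<0$ directly --- a contradiction; hence $u>0$ on $(0,1]$ and $W(1)>0$. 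On the other hand, if $u$ glued to a bounded exterior solution $\beta r^{-k}$, then $u'(1)=-k\,u(1)$, so $W(1)=-u(1)\big(k\psi(1)+\psi'(1)\big)=-u(1)(1-k)\phi'(1)<0$ when $k\ge 2$ --- again a contradiction. Thus no mode $k\ge 2$ survives, and collecting the three cases shows the kernel of \eqref{limit eq} equals $\mathrm{span}\{\partial_{x_1}w,\partial_{x_2}w\}$. I expect the $k\ge 2$ step to be the main obstacle: one must hit on the right Wronskian--type quantity paired with the explicit $k=1$ solution $\psi=-\phi'$, use $k^2-1>0$ to control its sign, and exclude interior zeros of $u_k$; by comparison the modes $k=0,1$ are soft, relying only on the scaling identity for \eqref{eq5} and the boundary data $\phi(1)=0$, $\phi''(1)=-\phi'(1)$, $\phi'(1)<0$.
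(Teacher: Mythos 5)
Your argument is correct and complete: the reduction to Fourier modes, the scaling-generated solution $Z=\tfrac{2}{p-1}\phi+r\phi'$ for $k=0$, the explicit matching for $k=1$, and the Wronskian $W=r(u'\psi-u\psi')$ against $\psi=-\phi'$ with $W'=\tfrac{k^2-1}{r}u\psi$ to kill the modes $k\ge 2$ all check out (the only point left implicit is that $u'(r_0)\neq 0$ at a first zero, which follows from ODE uniqueness). Note that the paper does not prove this proposition at all --- it simply cites \cite{DY} and \cite{CLW} --- and your proof is essentially the standard mode-by-mode ODE argument given in those references, so you have in effect supplied the missing proof rather than deviated from the paper's.
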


Let $ \eta $ be a smooth  truncation function satisfying
\begin{equation*}
supp(\eta)\subseteq B_1(0),\ \ 0\leq \eta\leq 1\ \text{in}\ B_{1}(0),\   \  \eta\equiv1\ \text{in}\ B_{\frac{1}{2}}(0).
\end{equation*}
Define $ \eta_i(x)=\eta\left ((x-z_i)|\ln\varepsilon|^{M+1}\right ) $. Clearly, $ supp(\eta_i)\subseteq B_{|\ln\varepsilon|^{-M-1}}(z_i) $ and $ supp(\eta_i)\cap supp(\eta_j)=\varnothing $ for $ i\neq j $ and $ \varepsilon $ sufficiently small. Moreover, $ ||\nabla\eta_i||_{L^\infty}\leq C|\ln\varepsilon|^{M+1} $ and $ ||\nabla^2\eta_i||_{L^\infty}\leq C|\ln\varepsilon|^{2M+2} $.

Denote
\begin{equation}\label{204}
F_{\delta,Z}=\left \{u\in L^p(\Omega)\mid \int_{\Omega}u\left( \eta_j\frac{\partial V_{\delta, Z,j}}{\partial x_h}\right) =0,\ \ \forall j=1,\cdots,m,\ h=1,2\right \},
\end{equation}
and
\begin{equation}\label{205}
E_{\delta,Z}=\left \{u\in W^{2,p}\cap H^1_0(\Omega)\mid \int_{\Omega}\left (K(x)\nabla u| \nabla \left( \eta_j\frac{\partial V_{\delta, Z,j}}{\partial x_h}\right)\right )=0,\ \ \forall j=1,\cdots,m,\ h=1,2\right \}.
\end{equation}
So $ F_{\delta,Z}$ and $ E_{\delta,Z} $ are co-dimensional $ 2m $ subspaces of $ L^p $ and $ W^{2,p}\cap H^1_0(\Omega) $, respectively.

For any $ u\in L^p(\Omega) $, we  define the projection operator $ Q_\delta: L^p \to  F_{\delta,Z} $
\begin{equation}\label{206}
Q_\delta u:=u-\sum_{j=1}^m\sum_{h=1}^2b_{j,h}\left( -\delta^2\text{div}\left (K(z_j)\nabla   \frac{\partial V_{\delta, z_j, \hat{q}_{\delta,j}}}{\partial x_h}\right )\right) ,
\end{equation}
where $ b_{j,h} (j=1,\cdots,m,\ h=1,2) $ satisfies
\begin{equation}\label{207}
\sum_{j=1}^m\sum_{h=1}^2b_{j,h}\int_{\Omega}\left( -\delta^2\text{div}\left (K(z_j)\nabla   \frac{\partial V_{\delta, z_j, \hat{q}_{\delta,j}}}{\partial x_h}\right )\right)\left( \eta_i\frac{\partial V_{\delta, Z,i}}{\partial x_\hbar}\right)=\int_{\Omega}u \left( \eta_i\frac{\partial V_{\delta, Z,i}}{\partial x_\hbar}\right)
\end{equation}
for $ i=1,\cdots,m,\ \hbar=1,2. $

We claim that $ Q_\delta $ is a well-defined  linear projection operator from $ L^p $ to $ F_{\delta,Z}$. Indeed, using  \eqref{3-001} and \eqref{200},  for  $ Z\in \Lambda_{\varepsilon, m} $ the coefficient matrix
\begin{equation}\label{coef of C}
\begin{split}
\int_{\Omega}&\left( -\delta^2\text{div}\left (K(z_j)\nabla   \frac{\partial V_{\delta, z_j, \hat{q}_{\delta,j}}}{\partial x_h}\right )\right)\left( \eta_i\frac{\partial V_{\delta, Z,i}}{\partial x_\hbar}\right)\\
&=p\int_{\Omega}\eta_i(V_{\delta, z_j, \hat{q}_{\delta,j}}-\hat{q}_{\delta,j})^{p-1}_+ \frac{\partial V_{\delta, z_j, \hat{q}_{\delta,j}}}{\partial x_h}\frac{\partial V_{\delta, z_i, \hat{q}_{\delta,i}}}{\partial x_\hbar}+O\left( \frac{\varepsilon^\gamma}{|\ln\varepsilon|^{p+1}}\right)\\
&=\delta_{i,j}\frac{(M_{i})_{h,\hbar}}{|\ln\varepsilon|^{p+1}}+O\left( \frac{\varepsilon^\gamma}{|\ln\varepsilon|^{p+1}}\right) ,
\end{split}
\end{equation}
where $ \delta_{i,j}= 1 $ if $ i = j $ and $ \delta_{i,j}= 0 $ otherwise.  $ M_{i} $ are $ m $ positive definite matrices 
such that all eigenvalues of $ M_{i} $ belong to $ (\bar{c}_1,\bar{c}_2) $ for constants $ \bar{c}_1,\bar{c}_2>0 $.
This implies the existence and uniqueness of $ b_{j,h} $.
Note that for $ u\in L^p $, $ Q_{\delta} u \equiv u $ in $ \Omega \backslash\cup_{i=1}^mB_{Ls_{\delta,i}}(z_{i}) $ for some $ L>1 $. Moreover, one can easily get that there exists a constant $ C>0 $ independent of $ \delta $, such that for any $ q\in[1,+\infty) $, $ u\in L^q(\Omega) $ with $ supp(u)\subset  \cup_{j=1}^mB_{Ls_{\delta,j}}(z_j)$,
\begin{equation*}
||Q_\delta u||_{L^q(\Omega)}\leq C||u||_{L^q(\Omega)}.
\end{equation*}

The linearized operator of \eqref{111} at $ V_{\delta, Z} $ is
\begin{equation*}
L_\delta \omega =-\delta^2\text{div}(K(x)\nabla \omega)- p(V_{\delta, Z}-q)^{p-1}_+\omega.
\end{equation*}
The following lemma gives estimates of the linear operator $ Q_\delta L_\delta $.
\begin{lemma}\label{coercive esti}
There exist  $ \rho_0>0, \delta_0>0 $ such that for any $ \delta\in(0,\delta_0],  Z\in \Lambda_{\varepsilon, m} $, if $ u\in E_{\delta,Z} $ satisfying $ Q_\delta L_\delta u=0 $ in $ \Omega\backslash\cup_{j=1}^mB_{Ls_{\delta,j}}(z_j) $ for some $ L>1 $ large, then
\begin{equation*}
||Q_\delta L_\delta u||_{L^p}\geq \frac{\rho_0\varepsilon^{\frac{2}{p}}}{|\ln\varepsilon|^{p-1}}||u||_{L^\infty}.
\end{equation*}
\end{lemma}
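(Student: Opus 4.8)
The plan is to argue by contradiction through a blow-up (rescaling) analysis, exploiting the non-degeneracy of $w$ from Proposition \ref{Non-degenerate}. Suppose the conclusion fails: there are $\delta_n=\varepsilon_n|\ln\varepsilon_n|^{-(p-1)/2}\to0$, $Z_n=(z_{n,1},\dots,z_{n,m})\in\Lambda_{\varepsilon_n,m}$ and $u_n\in E_{\delta_n,Z_n}$ with $Q_{\delta_n}L_{\delta_n}u_n=0$ on $\Omega\setminus\cup_jB_{Ls_{\delta_n,j}}(z_{n,j})$, normalized so that $\|u_n\|_{L^\infty}=1$, and $\|Q_{\delta_n}L_{\delta_n}u_n\|_{L^p}=o\bigl(\varepsilon_n^{2/p}|\ln\varepsilon_n|^{-(p-1)}\bigr)$. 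The first step localizes everything to the small balls. By Lemma \ref{lemA-5}, $(V_{\delta_n,Z_n}-q)_+\equiv0$ on $\Omega\setminus\cup_jB_{Ls_{\delta_n,j}}(z_{n,j})$, while the functions $-\delta_n^2\text{div}\bigl(K(z_{n,j})\nabla\partial_{x_h}V_{\delta_n,z_{n,j},\hat q_{\delta_n,j}}\bigr)=p(V_{\delta_n,z_{n,j},\hat q_{\delta_n,j}}-\hat q_{\delta_n,j})_+^{p-1}\partial_{x_h}V_{\delta_n,z_{n,j},\hat q_{\delta_n,j}}$ that span the range of $Id-Q_{\delta_n}$ are supported inside $B_{s_{\delta_n,j}}(z_{n,j})$. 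Hence $-\delta_n^2\text{div}(K(x)\nabla u_n)=L_{\delta_n}u_n=0$ on $\Omega\setminus\cup_jB_{Ls_{\delta_n,j}}(z_{n,j})$, and since $u_n=0$ on $\partial\Omega$, the maximum principle for the uniformly elliptic operator $-\text{div}(K\nabla\cdot)$ gives $\|u_n\|_{L^\infty(\Omega)}=\max_j\sup_{\overline{B_{Ls_{\delta_n,j}}(z_{n,j})}}|u_n|$. It therefore suffices to show $|u_n|\to0$ uniformly on each $\overline{B_{Ls_{\delta_n,j}}(z_{n,j})}$.

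Fix $i$ and rescale around $z_{n,i}$ at the bubble scale: set $\tilde u_n(y):=u_n\bigl(z_{n,i}+s_{\delta_n,i}T_{z_{n,i}}^{-1}y\bigr)$, a function with $\|\tilde u_n\|_{L^\infty}\le1$ on a domain exhausting $\mathbb{R}^2$. Because $s_{\delta_n,i}=\varepsilon_n(1+o(1))$ (up to a positive constant) while $\min_{j\ne i}|z_{n,i}-z_{n,j}|\ge|\ln\varepsilon_n|^{-M}\gg s_{\delta_n,i}$, on any fixed ball only the $i$-th bubble survives as $n\to\infty$: the other bubbles and all cutoffs $\eta_j$ recede to rescaled distance $\infty$. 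From \eqref{eq2-2}, \eqref{201} and the explicit form of $w$ one checks that $\bigl(V_{\delta_n,z_{n,i},\hat q_{\delta_n,i}}-\hat q_{\delta_n,i}\bigr)(z_{n,i}+s_{\delta_n,i}T_{z_{n,i}}^{-1}y)=(\delta_n/s_{\delta_n,i})^{2/(p-1)}w(y)$; hence by \eqref{203} the potential $p(V_{\delta_n,Z_n}-q)_+^{p-1}$ rescales to $p(\delta_n/s_{\delta_n,i})^2w_+^{p-1}+o\bigl((\delta_n/s_{\delta_n,i})^2\bigr)$ locally uniformly, and, since $K$ is continuous with $K(z_{n,i})=T_{z_{n,i}}^{-1}T_{z_{n,i}}^{-t}$, the operator $-\delta_n^2\text{div}(K(x)\nabla\cdot)$ turns into $(\delta_n/s_{\delta_n,i})^2(-\Delta_y)+o\bigl((\delta_n/s_{\delta_n,i})^2\bigr)$. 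Writing $L_{\delta_n}u_n=Q_{\delta_n}L_{\delta_n}u_n+\sum_{j,h}c^n_{j,h}\,p(V_{\delta_n,z_{n,j},\hat q_{\delta_n,j}}-\hat q_{\delta_n,j})_+^{p-1}\partial_{x_h}V_{\delta_n,z_{n,j},\hat q_{\delta_n,j}}$ (the second term being $(Id-Q_{\delta_n})L_{\delta_n}u_n$) and dividing the rescaled identity by $(\delta_n/s_{\delta_n,i})^2\sim|\ln\varepsilon_n|^{-(p-1)}$, the right-hand side splits into a part tending to $0$ in $L^p$ on compact sets — because $\|Q_{\delta_n}L_{\delta_n}u_n\|_{L^p}$ has exactly the matching size $o(\varepsilon_n^{2/p}|\ln\varepsilon_n|^{-(p-1)})$ and the change of variables contributes $s_{\delta_n,i}^{2/p}\sim\varepsilon_n^{2/p}$ — plus, by decoupling, only the $j=i$ terms, which after rescaling take the form $pw_+^{p-1}\sum_hd^n_h\partial_{y_h}w$ with coefficients $d^n_h$ bounded (using that $u_n\in E_{\delta_n,Z_n}$ kills the gradient part in \eqref{207}, so $c^n_{j,h}$ is controlled through the nondegenerate matrix \eqref{coef of C} by a $(V_{\delta_n,Z_n}-q)_+^{p-1}$-weighted integral localized in the balls). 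Thus $\tilde u_n$ solves $-\Delta_y\tilde u_n-pw_+^{p-1}\tilde u_n=pw_+^{p-1}\sum_hd^n_h\partial_{y_h}w+o_{L^p_{loc}}(1)$, and, since $w_+^{p-1}\in L^\infty$, elliptic $W^{2,p}$ estimates plus the compact embedding $W^{2,p}_{loc}\hookrightarrow C^0_{loc}$ (valid for every $p>1$ in the plane) give, along a subsequence, $\tilde u_n\to v$ uniformly on compact sets, with $\|v\|_{L^\infty}\le1$, $d^n_h\to\bar d_h$, and $-\Delta v-pw_+^{p-1}v=pw_+^{p-1}\sum_h\bar d_h\partial_{y_h}w$ on $\mathbb{R}^2$; by the boundedness and compact support of the right-hand side, $v\in C^{1,\alpha}_{loc}$ and is harmonic near infinity.

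It remains to show $v\equiv0$. Testing the limit equation against $\partial_{y_\hbar}w$, integrating by parts (the boundary terms at infinity vanish since $v$ is bounded and harmonic there and $\partial_{y_\hbar}w=O(|y|^{-1})$), and using that $\partial_{y_\hbar}w$ solves \eqref{limit eq} together with the symmetry relation $\int_{\mathbb{R}^2}pw_+^{p-1}\partial_{y_h}w\,\partial_{y_\hbar}w=c_0\delta_{h\hbar}$ with $c_0>0$, we obtain $\bar d_\hbar=0$ for $\hbar=1,2$. Hence $v$ solves the homogeneous equation \eqref{limit eq}, so by Proposition \ref{Non-degenerate}, $v\in\text{span}\{\partial_{y_1}w,\partial_{y_2}w\}$. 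On the other hand, rescaling the orthogonality relations \eqref{205} that define $E_{\delta_n,Z_n}$, after integration by parts and discarding the lower-order contributions of the cutoffs $\eta_i$ and of the projection terms $H_{\delta_n,z_{n,i},\hat q_{\delta_n,i}}$ (controlled via \eqref{3-001} and Lemma \ref{H estimate}), yields in the limit $\int_{\mathbb{R}^2}pw_+^{p-1}\,v\,\partial_{y_h}w=0$ for $h=1,2$. Combined with $v\in\text{span}\{\partial_{y_1}w,\partial_{y_2}w\}$ and the positive-definiteness $c_0I$ of the Gram matrix, this forces $v\equiv0$. Therefore $\tilde u_n\to0$ uniformly on the fixed ellipse corresponding to $B_{Ls_{\delta_n,i}}(z_{n,i})$, for each $i$; by the first step this contradicts $\|u_n\|_{L^\infty}=1$, and the proof is complete.

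The routine ingredients are the interior elliptic estimates and the symmetry/orthogonality relations for the kernel $\partial_{y_h}w$. I expect the main obstacle to be the second step: establishing the rescaled equation with the correct right-hand side and, in particular, verifying that (i) the $m$ bubbles genuinely decouple — which rests on the separation $\min_{i\ne j}|z_i-z_j|\ge|\ln\varepsilon|^{-M}$ built into $\Lambda_{\varepsilon,m}$ with $M=m^2+1$ being far larger than the bubble scale $s_{\delta,i}\sim\varepsilon$ — and (ii) the projection coefficients $c^n_{j,h}$ and the discrepancy between the $V_{\delta,Z,j}$ appearing in \eqref{205} and the $V_{\delta,z_j,\hat q_{\delta,j}}$ appearing in \eqref{206} are of strictly lower order, which is where the nondegenerate coefficient matrix \eqref{coef of C}, Lemma \ref{H estimate}, and the closeness estimate \eqref{203} enter decisively. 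Once this is in place, the non-degeneracy of $w$ and the orthogonality conditions close the argument.
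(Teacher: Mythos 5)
Your proposal is correct and follows essentially the same route as the paper: contradiction, blow-up at the bubble scale $s_{\delta,i}$, identification of the limit as an element of $\mathrm{span}\{\partial_{y_1}w,\partial_{y_2}w\}$ via Proposition \ref{Non-degenerate}, elimination of that element through the rescaled orthogonality conditions defining $E_{\delta,Z}$, and the maximum principle outside the balls. The only local deviation is your handling of the projection coefficients: the paper proves $b_{j,h,N}=O(\varepsilon_N^{1+\gamma}|\ln\varepsilon_N|)$ a priori (using the cancellation in \eqref{302}, which rests on \eqref{203}), so the correction term vanishes before passing to the limit, whereas you only establish boundedness of the rescaled coefficients and then remove the limiting inhomogeneity $pw_+^{p-1}\sum_h\bar d_h\partial_{y_h}w$ by testing against $\partial_{y_\hbar}w$ — both close the argument.
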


\begin{proof}
We argue by contradiction. Suppose that there are $ \delta_N \to 0 $, $ Z_N=(z_{N,1},\cdots,z_{N,m})\to (z_{1},\cdots,z_{m})\in B_{\bar{\rho}}(x_{0})^{(m)} $  and $  u_N \in E_{\delta_N, Z_N} $ with $ Q_{\delta_N} L_{\delta_N} u_N = 0 $ in $ \Omega \backslash\cup_{j=1}^mB_{Ls_{\delta_N,j}}(z_{N,j}) $ for some $ L $ large and $ ||u_N||_{L^\infty}=1 $ such that
\begin{equation*}
||Q_{\delta_N} L_{\delta_N} u_N||_{L^p}\leq \frac{1}{N}\frac{\varepsilon_N^{\frac{2}{p}}}{|\ln\varepsilon_N|^{p-1}}.
\end{equation*}
 Let
\begin{equation}\label{208}
Q_{\delta_N} L_{\delta_N} u_N=  L_{\delta_N} u_N-\sum_{j=1}^m\sum_{h=1}^2b_{j,h,N}\left( -\delta_N^2\text{div}\left (K(z_{N,j})\nabla   \frac{\partial V_{\delta_N, z_{N,j},\hat{q}_{\delta_N,j}}}{\partial x_h}\right )\right) .
\end{equation}
We now estimate $ b_{j,h,N} $. For fixed $ i=1,\cdots,m, \hbar=1,2 $, multiplying \eqref{208} by $ \eta_i\frac{\partial V_{\delta_N, Z_{N},i}}{\partial x_\hbar} $ and integrating on $ \Omega $ we get
\begin{equation*}
\begin{split}
\int_{\Omega}u_N&L_{\delta_N}\left( \eta_i\frac{\partial V_{\delta_N, Z_{N},i}}{\partial x_\hbar}\right) =\int_{\Omega}L_{\delta_N}u_N\left( \eta_i\frac{\partial V_{\delta_N, Z_{N},i}}{\partial x_\hbar}\right) \\
=&\sum_{j=1}^m\sum_{h=1}^2b_{j,h,N}\int_{\Omega}-\delta_N^2\text{div}\left (K(z_{N,j})\nabla   \frac{\partial V_{\delta_N, z_{N,j},\hat{q}_{\delta_N,j}}}{\partial x_h}\right )\left( \eta_i\frac{\partial V_{\delta_N, Z_{N},i}}{\partial x_\hbar}\right) .
\end{split}
\end{equation*}

We estimate $ \int_{\Omega}u_NL_{\delta_N}\left( \eta_i\frac{\partial V_{\delta_N, Z_{N},i}}{\partial x_\hbar}\right). $ Note that
\begin{equation*}
\begin{split}
&\int_{\Omega}u_NL_{\delta_N}\left( \eta_i\frac{\partial V_{\delta_N, Z_{N},i}}{\partial x_\hbar}\right) \\
=&-\int_{\Omega}u_N\delta_N^2\text{div}\left (K(x)\nabla \left( \eta_i\frac{\partial V_{\delta_N, Z_{N},i}}{\partial x_\hbar}\right) \right )- p\int_{\Omega}u_N(V_{\delta_N, Z_N}-q)^{p-1}_+\left( \eta_i\frac{\partial V_{\delta_N, Z_{N},i}}{\partial x_\hbar}\right)  \\
=&-\int_{\Omega}\eta_iu_N\delta_N^2\text{div}\left (K(x)\nabla \frac{\partial V_{\delta_N, Z_{N},i}}{\partial x_\hbar} \right )-2\int_{\Omega}u_N\delta_N^2\left (K(x)\nabla\eta_i|\nabla   \frac{\partial V_{\delta_N, Z_{N},i}}{\partial x_\hbar}  \right )\\
&-\int_{\Omega}u_N\delta_N^2\text{div}\left (K(x)\nabla   \eta_i  \right )\frac{\partial V_{\delta_N, Z_{N},i}}{\partial x_\hbar}- p\int_{\Omega}u_N(V_{\delta_N, Z_N}-q)^{p-1}_+\left( \eta_i\frac{\partial V_{\delta_N, Z_{N},i}}{\partial x_\hbar}\right)
\end{split}
\end{equation*}
\begin{equation}\label{301}
\begin{split}
=&\int_{\Omega}\eta_iu_Np\left( V_{\delta_N, z_{N,i}, \hat{q}_{\delta_N,i}}-\hat{q}_{\delta_N,i}\right)^{p-1}_+\frac{\partial V_{\delta_N, z_{N,i}, \hat{q}_{\delta_N,i}}}{\partial x_\hbar}+\int_{\Omega}\eta_iu_N\delta_N^2\text{div}\left (\frac{\partial K(x)}{\partial x_\hbar}\nabla V_{\delta_N, Z_{N},i}  \right )\\
&-2\int_{\Omega}u_N\delta_N^2\left (K(x)\nabla\eta_i|\nabla   \frac{\partial V_{\delta_N, Z_{N},i}}{\partial x_\hbar}  \right ) -\int_{\Omega}u_N\delta_N^2\text{div}\left (K(x)\nabla   \eta_i  \right )\frac{\partial V_{\delta_N, Z_{N},i}}{\partial x_\hbar}\\
&- p\int_{\Omega}u_N(V_{\delta_N, Z_N}-q)^{p-1}_+\left( \eta_i\frac{\partial V_{\delta_N, Z_{N},i}}{\partial x_\hbar}\right).
\end{split}
\end{equation}

By \eqref{3-001}, \eqref{203} and Lemma \ref{lemA-5}, one has
\begin{equation}\label{302}
\begin{split}
\int_{\Omega}\eta_i&u_Np\left( V_{\delta_N, z_{N,i}, \hat{q}_{\delta_N,i}}-\hat{q}_{\delta_N,i}\right)^{p-1}_+\frac{\partial V_{\delta_N, z_{N,i}, \hat{q}_{\delta_N,i}}}{\partial x_\hbar}-  p\int_{\Omega}u_N(V_{\delta_N, Z_N}-q)^{p-1}_+\left( \eta_i\frac{\partial V_{\delta_N, Z_{N},i}}{\partial x_\hbar}\right)\\
=&p\int_{\Omega}u_N\left( V_{\delta_N, z_{N,i}, \hat{q}_{\delta_N,i}}-\hat{q}_{\delta_N,i}\right)^{p-1}_+\frac{\partial V_{\delta_N, z_{N,i}, \hat{q}_{\delta_N,i}}}{\partial x_\hbar}\\
&-p\int_{\Omega}u_N\left( V_{\delta_N, z_{N,i}, \hat{q}_{\delta_N,i}}-\hat{q}_{\delta_N,i}+O\left( \frac{\varepsilon_N^\gamma}{|\ln\varepsilon_N|}\right) \right)^{p-1}_+\frac{\partial V_{\delta_N, z_{N,i}, \hat{q}_{\delta_N,i}}}{\partial x_\hbar}+O\left( \frac{\varepsilon_N^{1+\gamma}}{|\ln\varepsilon_N|^p}\right)\\
=&O\left( \frac{\varepsilon_N^{1+\gamma}}{|\ln\varepsilon_N|^p}\right).
\end{split}
\end{equation}
By the choice of $ \eta_i $, we have $ ||\nabla\eta_i||_{L^\infty}\leq C|\ln\varepsilon|^{M+1}, ||\nabla^2\eta_i||_{L^\infty}\leq C|\ln\varepsilon|^{2M+2} $. Thus there holds
\begin{equation}\label{304}
\begin{array}{ll}
\int_{\Omega}\eta_iu_N\delta_N^2\text{div}\left (\frac{\partial K(x)}{\partial x_\hbar}\nabla V_{\delta_N, Z_{N},i}  \right )&\\
\qquad\qquad\,=\int\limits_{|T_{z_{N,i}}(x-z_{N,i})|\leq s_{\delta_N,i}}\eta_iu_N \left( \delta_N^2\text{div}\left( \frac{\partial K(x)}{\partial x_\hbar}\nabla V_{\delta_N, z_{N,i}, \hat{q}_{\delta_N,i}}\right)\right)&\\
\qquad\qquad\quad\,\,\,+\int\limits_{ s_{\delta_N,i}<|T_{z_{N,i}}(x-z_{N,i})|\leq |\ln\varepsilon|^{-M-1}}
\eta_iu_N \left( \delta_N^2\text{div}\left( \frac{\partial K(x)}{\partial x_\hbar}\nabla V_{\delta_N, z_{N,i}, \hat{q}_{\delta_N,i}}\right)\right)&\\
\qquad\qquad\quad\,\,+O\left( \frac{\delta_N^2}{|\ln\varepsilon_N|}\right)  &\\
\qquad\qquad\,=O\left( \frac{\delta_N^2}{|\ln\varepsilon_N|}\right) +O(\delta_N^2)+O\left( \frac{\delta_N^2}{|\ln\varepsilon_N|}\right)&\\
\qquad\qquad\,=O\left( \frac{\varepsilon_N^2}{|\ln\varepsilon_N|^{p-1}}\right),
\end{array}
\end{equation}
\begin{equation}\label{305}
\begin{split}
-2\int_{\Omega}&u_N\delta_N^2\left (K(x)\nabla\eta_i|\nabla   \frac{\partial V_{\delta_N, Z_{N},i}}{\partial x_\hbar}  \right )\\
=&-2\int_{B_{|\ln\varepsilon|^{-M-1}}(z_{N,i})\backslash B_{\frac{|\ln\varepsilon|^{-M-1}}{2}}(z_{N,i})}u_N\delta_N^2\left (K(x)\nabla\eta_i|\nabla   \frac{\partial V_{\delta_N, z_{N,i},\hat{q}_{\delta_N,i}}}{\partial x_\hbar}  \right )\\
\,\,\,&+O\left( \delta_N^2|\ln\varepsilon_N|^{M}\right)\\
=&O\left( \delta_N^2|\ln\varepsilon_N|^{M}\right),
\end{split}
\end{equation}
\begin{equation}\label{306}
\begin{split}
-\int_{\Omega}&u_N\delta_N^2\text{div}\left (K(x)\nabla   \eta_i  \right )\frac{\partial V_{\delta_N, Z_{N},i}}{\partial x_\hbar}\\
=&-\int_{B_{|\ln\varepsilon|^{-M-1}}(z_{N,i})\backslash B_{\frac{|\ln\varepsilon|^{-M-1}}{2}}(z_{N,i})}u_N\delta_N^2\text{div}\left (K(x)\nabla   \eta_i  \right )\frac{\partial V_{\delta_N, z_{N,i},\hat{q}_{\delta_N,i}}}{\partial x_\hbar}\\
\,\,&+O\left( \delta_N^2|\ln\varepsilon_N|^{2M+1}\right)\\
=&O\left( \delta_N^2|\ln\varepsilon_N|^{2M+1}\right),
\end{split}
\end{equation}
where we have used \eqref{200}. Taking \eqref{302}, \eqref{304}, \eqref{305} and \eqref{306} into \eqref{301}, we get
\begin{equation*}
\begin{split}
&\int_{\Omega}u_NL_{\delta_N}\left( \eta_i\frac{\partial V_{\delta_N, Z_{N},i}}{\partial x_\hbar}\right) =O\left( \frac{\varepsilon_N^{1+\gamma}}{|\ln\varepsilon_N|^p}\right).
\end{split}
\end{equation*}
Combining with \eqref{coef of C} we obtain
\begin{equation*}
b_{j,h,N}=O\left (\varepsilon_N^{1+\gamma}|\ln\varepsilon_N|\right ),
\end{equation*}
which implies that
\begin{equation*}
\begin{split}
\sum_{j=1}^m\sum_{h=1}^2&b_{j,h,N}\left( -\delta_N^2\text{div}\left (K(z_{N,j})\nabla   \frac{\partial V_{\delta_N, z_{N,j},\hat{q}_{\delta_N,j}}}{\partial x_h}\right )\right)\\
&=O\left( \sum_{j=1}^m\sum_{h=1}^2\frac{\varepsilon_N^{\frac{2}{p}-1}|b_{j,h,N}|}{|\ln\varepsilon_N|^{p}}\right)
=O\left( \frac{\varepsilon_N^{\frac{2}{p}+\gamma}}{|\ln\varepsilon_N|^{p-1}}\right),\ \ \ \ \text{in}\ L^p(\Omega).
\end{split}
\end{equation*}
Hence by \eqref{208} we have
\begin{equation}\label{307}
\begin{split}
L_{\delta_N} u_N=&Q_{\delta_N} L_{\delta_N} u_N+\sum_{j=1}^m\sum_{h=1}^2b_{j,h,N}\left( -\delta_N^2\text{div}\left (K(z_{N,j})\nabla   \frac{\partial V_{\delta_N, z_{N,j},\hat{q}_{\delta_N,j}}}{\partial x_h}\right )\right)\\
=&O\left( \frac{1}{N}\frac{\varepsilon_N^{\frac{2}{p}}}{|\ln\varepsilon_N|^{p-1}}\right) +O\left(  \frac{\varepsilon_N^{\frac{2}{p}+\gamma}}{|\ln\varepsilon_N|^{p-1}}\right)
=o\left( \frac{\varepsilon_N^{\frac{2}{p}}}{|\ln\varepsilon_N|^{p-1}}\right),\ \ \ \ \text{in}\ L^p(\Omega).
\end{split}
\end{equation}

For fixed $ i, $ we define the scaled function $ \tilde{u}_{N,i}(y)=u_N(s_{\delta_N, i}y+z_{N,i}) $ for $ y\in \Omega_{N,i}:=\{y\in\mathbb{R}^2\mid s_{\delta_N, i}y+z_{N,i}\in \Omega\} $.
Define
\begin{equation*}
\tilde{L}_{N,i}u=-\text{div}(K(s_{\delta_N, i}y+z_{N,i})\nabla u)-p\frac{s_{\delta_N,i}^2}{\delta_N^2}(V_{\delta_N, Z_N}(s_{\delta_N, i}y+z_{N,i})-q(s_{\delta_N, i}y+z_{N,i}))^{p-1}_+u.
\end{equation*}
Then
\begin{equation*}
\begin{split}
||\tilde{L}_{N,i}\tilde{u}_{N,i}||_{L^p(\Omega_{N,i})}
=\frac{s_{\delta_N, i}^2}{s_{\delta_N, i}^{\frac{2}{p}}\delta_N^2}||L_{\delta_N}u_N||_{L^p(\Omega)}.
\end{split}
\end{equation*}
Note that $ \frac{\delta_N^2}{s_{\delta_N, i}^2}=O(\frac{1}{|\ln\varepsilon_N|^{p-1}}) $ and $ s_{\delta_N, i}=O(\varepsilon_N) $, thus by \eqref{307} we get
\begin{equation*}
\tilde{L}_{N,i}\tilde{u}_{N,i}=o(1)\ \ \ \  \text{in}\ \ L^p(\Omega_{N,i}).
\end{equation*}
Since $ ||\tilde{u}_{N,i}||_{L^\infty(\Omega_{N,i})}=1 $, by the classical regularity theory of  elliptic equations, $ \tilde{u}_{N,i} $ is uniformly bounded in $ W^{2,p}_{loc}(\mathbb{R}^2) $, which implies that
\begin{equation*}
\tilde{u}_{N,i}\to u_i\ \ \ \ \text{in}\ \ C^1_{loc}(\mathbb{R}^2).
\end{equation*}

We claim that $ u_i\equiv 0. $ On the one hand, note that for $ Z\in \Lambda_{\varepsilon, m} $, $ |z_i-z_j|\geq |\ln\varepsilon|^{-M} $. So by \eqref{203}, $ z_{N,i}\to z_i $ as $ N\to\infty $ and the fact that $ \lim_{\varepsilon\to 0}\varepsilon|\ln\varepsilon|^M=0 $, we get
\begin{equation*}
\begin{split}
\frac{s_{\delta_N,i}^2}{\delta_N^2}&(V_{\delta_N, Z_N}(s_{\delta_N, i}y+z_{N,i})-q(s_{\delta_N, i}y+z_{N,i}))^{p-1}_+\\
=&\frac{s_{\delta_N,i}^2}{\delta_N^2}\left( V_{\delta_N, z_{N,i}, \hat{q}_{\delta_N,i}}(s_{\delta_N, i}y+z_{N,i})-\hat{q}_{\delta_N,i}+O\left (\frac{\varepsilon_N^{\gamma}}{|\ln\varepsilon_N|}\right )\right)^{p-1}_+ \\
\to&\phi(T_{z_i}y)^{p-1}_+\ \  \ \ \text{in}\ C^0_{loc}(\mathbb{R}^2)\ \text{as}\ N\to\infty,
\end{split}
\end{equation*}
from which we deduce that $ u_i $ satisfies
\begin{equation*}
-\text{div}(K(z_i)\nabla u_i(x))-p\phi(T_{z_i}x)^{p-1}_+u_i(x)=0,\ \ x\in\mathbb{R}^2.
\end{equation*}
Let $ \hat{u}_i(x)=u_i(T_{z_i}^{-1}x) $. Since $ T_{z_i}^{-1}(T_{z_i}^{-1})^t=K(z_i) $, we have
\begin{equation*}
-\Delta \hat{u}_i(x)=-\text{div}(K(z_i)\nabla u_i)(T_{z_i}^{-1}x)=p\phi(x)^{p-1}_+\hat{u}_i(x),\ \ \ \   \ x\in \mathbb{R}^2.
\end{equation*}
By Proposition \ref{Non-degenerate}, there exist $ c_1,c_2 $ such that
\begin{equation}\label{210}
\hat{u}_i=c_1\frac{\partial \phi}{\partial x_1}+c_2\frac{\partial \phi}{\partial x_2}.
\end{equation}

On the other hand, since $ u_N\in E_{\delta_N,Z_N} $, we get
\begin{equation*}
\int_{\Omega}-\delta_N^2\text{div}\left( K(x)\nabla \left( \eta_i\frac{\partial V_{\delta_N, Z_{N}, i}}{\partial x_\hbar}\right)\right)  u_N=0,\ \ \forall\ i=1,\cdots,m,\  \hbar=1,2,
\end{equation*}
which implies that
\begin{equation}\label{209}
\begin{split}
0
=&p\int_{\Omega}u_N\left( V_{\delta_N, z_{N,i}, \hat{q}_{\delta_N,i}}-\hat{q}_{\delta_N,i}\right)^{p-1}_+\frac{\partial V_{\delta_N, z_{N,i}, \hat{q}_{\delta_N,i}}}{\partial x_\hbar}+\int_{\Omega}\eta_iu_N\delta_N^2\text{div}\left (\frac{\partial K(x)}{\partial x_\hbar}\nabla V_{\delta_N, Z_{N},i}  \right )\\
&-2\int_{\Omega}u_N\delta_N^2\left (K(x)\nabla\eta_i|\nabla   \frac{\partial V_{\delta_N, Z_{N},i}}{\partial x_\hbar}  \right ) -\int_{\Omega}u_N\delta_N^2\text{div}\left (K(x)\nabla   \eta_i  \right )\frac{\partial V_{\delta_N, Z_{N},i}}{\partial x_\hbar}.
\end{split}
\end{equation}
By \eqref{304}, \eqref{305} and \eqref{306},
\begin{equation}\label{209-1}
\begin{split}
\int_{\Omega}&\eta_iu_N\delta_N^2\text{div}\left (\frac{\partial K(x)}{\partial x_\hbar}\nabla V_{\delta_N, Z_{N},i}  \right )-2\int_{\Omega}u_N\delta_N^2\left (K(x)\nabla\eta_i|\nabla   \frac{\partial V_{\delta_N, Z_{N},i}}{\partial x_\hbar}  \right ) \\
&-\int_{\Omega}u_N\delta_N^2\text{div}\left (K(x)\nabla   \eta_i  \right )\frac{\partial V_{\delta_N, Z_{N},i}}{\partial x_\hbar}
=O\left( \delta_N^2|\ln\varepsilon_N|^{2M+1}\right).
\end{split}
\end{equation}
It follows from  \eqref{200} that
\begin{equation}\label{209-3}
\begin{split}
&p\int_{\Omega} u_N(V_{\delta_N, z_{N,i}, \hat{q}_{\delta_N,i}}-\hat{q}_{\delta_N,i})^{p-1}_+ \frac{\partial V_{\delta_N, z_{N,i}, \hat{q}_{\delta_N,i}}}{\partial x_\hbar}\\
=&p\int_{\Omega}\frac{1}{s_{\delta_N,i}}\left( \frac{\delta_N}{s_{\delta_N,i}}\right)^{\frac{2p}{p-1}} \phi\left( \frac{T_{z_{N,i}}(x-z_{N,i})}{s_{\delta_N,i}}\right)^{p-1}_+\phi'\left( \frac{T_{z_{N,i}}(x-z_{N,i})}{s_{\delta_N,i}}\right)\frac{(T_{z_{N,i}})_\hbar^t\cdot T_{z_{N,i}}(x-z_{N,i})}{|T_{z_{N,i}}(x-z_{N,i})|}u_N\\
=&ps_{\delta_N,i}\left( \frac{\delta_N}{s_{\delta_N,i}}\right)^{\frac{2p}{p-1}}\int_{\mathbb{R}^2}\phi(T_{z_{N,i}}y)^{p-1}_+\phi'(T_{z_{N,i}}y)\frac{(T_{z_{N,i}})_\hbar^t\cdot T_{z_{N,i}}y}{|T_{z_{N,i}}y|}\tilde{u}_{N,i}(y)dy.
\end{split}
\end{equation}
Taking \eqref{209-1}  and \eqref{209-3} into \eqref{209}, we have
\begin{equation}\label{209-0}
\begin{split}
0=&ps_{\delta_N,i}\left( \frac{\delta_N}{s_{\delta_N,i}}\right)^{\frac{2p}{p-1}}\int_{\mathbb{R}^2}\phi(T_{z_{N,i}}y)^{p-1}_+\phi'(T_{z_{N,i}}y)\frac{(T_{z_{N,i}})_\hbar^t\cdot T_{z_{N,i}}y}{|T_{z_{N,i}}y|}\tilde{u}_{N,i}(y)dy+O\left( \delta_N^2|\ln\varepsilon_N|^{2M+1}\right).
\end{split}
\end{equation}
Dividing both sides of \eqref{209-0} into $ ps_{\delta_N,i}(\frac{\delta_N}{s_{\delta_N,i}})^{\frac{2p}{p-1}} $ and passing $ N $ to the limit, we get
\begin{equation*}
\begin{split}
0
=\int_{\mathbb{R}^2}\phi(x)^{p-1}_+\phi'(x)\frac{(T_{z_{i}})_\hbar^t\cdot x}{|x|}\hat{u}_{i}(x)\sqrt{\det(K(z_i))}dx,\ \  \hbar=1,2,
\end{split}
\end{equation*}
which implies that
\begin{equation}\label{211}
0=\int_{B_1(0)}\phi^{p-1}_+\frac{\partial \phi}{\partial x_h}\hat{u}_i.
\end{equation}
Combining \eqref{210} with \eqref{211}, there holds $ c_1=c_2=0. $ That is, $ u_i\equiv 0. $ So we conclude that $ \tilde{u}_{N,i}\to 0 $ in  $ C^1(B_L(0)) $, which implies that
\begin{equation}\label{212}
||u_N||_{L^\infty(B_{Ls_{\delta_N,i}}(z_{N,i}))}=o(1).
\end{equation}

Since $ Q_{\delta_N} L_{\delta_N} u_N = 0 $ in $ \Omega \backslash\cup_{i=1}^mB_{Ls_{\delta_N,i}}(z_{N,i}) $, we have for $ L $ large
\begin{equation*}
 L_{\delta_N} u_N = 0\  \ \text{in} \ \Omega \backslash\cup_{i=1}^mB_{Ls_{\delta_N,i}}(z_{N,i}).
\end{equation*}
By Lemma \ref{lemA-5}, one has $ (V_{\delta_N,Z_N}-q)_+=0\  \ \text{in} \ \Omega \backslash\cup_{i=1}^mB_{Ls_{\delta_N,i}}(z_{N,i}). $
So $ -\text{div}(K(x)\nabla u_N)=0 $ in $ \Omega \backslash\cup_{i=1}^mB_{Ls_{\delta_N,i}}(z_{N,i}). $
Thus by the maximum principle, we get
\begin{equation*}
||u_N||_{L^\infty(\Omega\backslash \cup_{i=1}^mB_{Ls_{\delta_N,i}}(z_{N,i}))}=o(1),
\end{equation*}
which combined with \eqref{212} we have
\begin{equation*}
||u_N||_{L^\infty(\Omega)}=o(1).
\end{equation*}
This is a contradiction since $ ||u_N||_{L^\infty(\Omega)}=1. $

\end{proof}

A direct consequence of Lemma \ref{coercive esti} is that $ Q_\delta L_\delta $ is indeed a one to one and onto map from $ E_{\delta,Z} $ to $ F_{\delta, Z}. $
\begin{proposition}\label{one to one and onto}
$ Q_\delta L_\delta $ is  a one to one and onto map from $ E_{\delta,Z} $ to $ F_{\delta, Z}. $
\end{proposition}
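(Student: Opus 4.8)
The plan is to exhibit $Q_\delta L_\delta$ as a compact perturbation of an isomorphism from $E_{\delta,Z}$ onto $F_{\delta,Z}$, and then to deduce bijectivity (for $\delta$ small enough that Lemma \ref{coercive esti} applies) from that a priori bound.

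First I would split $L_\delta u = A u + B u$ with $A u := -\delta^2\,\mathrm{div}(K(x)\nabla u)$ and $B u := -p(V_{\delta,Z}-q)_+^{p-1}u$. By $(\mathcal{K}1)$ and the $L^p$-theory for uniformly elliptic operators in divergence form on the smooth domain $\Omega$, $A$ is an isomorphism from $W^{2,p}\cap H^1_0(\Omega)$ onto $L^p(\Omega)$. Since the functions $\eta_j\,\partial_{x_h}V_{\delta,Z,j}$ are smooth and compactly supported in $\Omega$, an integration by parts gives $\int_\Omega\bigl(K(x)\nabla u\,\big|\,\nabla(\eta_j\,\partial_{x_h}V_{\delta,Z,j})\bigr)=\delta^{-2}\int_\Omega (Au)\,\eta_j\,\partial_{x_h}V_{\delta,Z,j}$; hence $u\in E_{\delta,Z}$ if and only if $Au\in F_{\delta,Z}$, so $A$ restricts to an isomorphism $A|_{E_{\delta,Z}}:E_{\delta,Z}\to F_{\delta,Z}$. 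On the other hand, $(V_{\delta,Z}-q)_+$ is bounded (indeed $O(|\ln\varepsilon|^{-1})$, by \eqref{203} and the choice of $\hat{q}_{\delta,j}$) and supported in finitely many small balls, so $B$ is a bounded multiplication operator into $L^p(\Omega)$; composed with the compact embedding $W^{2,p}\cap H^1_0(\Omega)\hookrightarrow C(\overline{\Omega})$ (valid since $p>1$ in dimension two), the map $B:W^{2,p}\cap H^1_0(\Omega)\to L^p(\Omega)$ is compact.

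Next I would record two facts about $Q_\delta$: it is a bounded projection of $L^p(\Omega)$ onto $F_{\delta,Z}$ — the correction term in \eqref{206} is supported in $\cup_j B_{Ls_{\delta,j}}(z_j)$ and its coefficients $b_{j,h}$ are controlled by $\|u\|_{L^p}$ through the uniformly invertible matrix in \eqref{coef of C} — and $Q_\delta$ acts as the identity on $F_{\delta,Z}$, because if $u\in F_{\delta,Z}$ the right-hand side of \eqref{207} vanishes and invertibility of that matrix forces $b_{j,h}=0$. Consequently, for $u\in E_{\delta,Z}$ we have $Au\in F_{\delta,Z}$, hence $Q_\delta L_\delta u = Au + Q_\delta(Bu)$, where $A|_{E_{\delta,Z}}$ is an isomorphism onto $F_{\delta,Z}$ and $Q_\delta B|_{E_{\delta,Z}}:E_{\delta,Z}\to F_{\delta,Z}$ is compact. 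Therefore $Q_\delta L_\delta:E_{\delta,Z}\to F_{\delta,Z}$ is Fredholm of index $0$.

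It then suffices to prove injectivity. If $u\in E_{\delta,Z}$ and $Q_\delta L_\delta u = 0$, then in particular $Q_\delta L_\delta u = 0$ on $\Omega\backslash\cup_j B_{Ls_{\delta,j}}(z_j)$ for the large $L$ of Lemma \ref{coercive esti}, and that lemma yields $\|u\|_{L^\infty}\le \rho_0^{-1}\varepsilon^{-2/p}|\ln\varepsilon|^{p-1}\|Q_\delta L_\delta u\|_{L^p}=0$, so $u\equiv 0$. An injective Fredholm operator of index $0$ is surjective, which finishes the proof. The only point that needs care is the bookkeeping in the third paragraph — matching the constraints defining $E_{\delta,Z}$ and $F_{\delta,Z}$ under $A$, and checking $Q_\delta|_{F_{\delta,Z}}=\mathrm{id}$; once these are in place, the compactness of $B$, the Fredholm alternative, and Lemma \ref{coercive esti} do the rest, so I do not anticipate a serious obstacle.
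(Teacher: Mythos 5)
Your proposal is correct and follows essentially the same route as the paper: injectivity from Lemma \ref{coercive esti}, and surjectivity by viewing $Q_\delta L_\delta$ as a compact perturbation of the isomorphism $-\delta^2\mathrm{div}(K\nabla\cdot):E_{\delta,Z}\to F_{\delta,Z}$ and invoking the Fredholm alternative. The only presentational difference is that you identify the constrained spaces under $A$ by integration by parts, where the paper uses the Riesz representation theorem plus elliptic regularity; these are equivalent.
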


\begin{proof}
If $ Q_\delta L_\delta u=0 $, by Lemma \ref{coercive esti}, $ u=0 $. So $ Q_\delta L_\delta $ is  one to one.

 Denote
\begin{equation*}
\hat{E}=\left \{u\in H^1_0(\Omega)\mid \int_{\Omega}\left( K(x)\nabla u|\nabla\left(\eta_i \frac{\partial V_{\delta, Z,i}}{\partial x_h}\right)\right)  =0,\ \ \ \ i=1,\cdots,m,\ h=1,2\right \}.
\end{equation*}
Then $ E_{\delta,Z}=\hat{E}\cap W^{2,p}(\Omega) $. For any $ \hat{h}\in F_{\delta,Z}  $, by the Riesz representation theorem there is a unique $ u\in H^1_0(\Omega) $ such that
\begin{equation}\label{213}
\delta^2\int_{\Omega}\left( K(x)\nabla u|\nabla\varphi\right) =\int_{\Omega}\hat{h}\varphi,\ \ \   \ \forall \varphi\in H^1_0(\Omega).
\end{equation}

Since $ \hat{h}\in F_{\delta,Z} $, we have $ u\in \hat{E}. $ Using the classical $ L^p $ theory of elliptic equations, we conclude that $ u\in W^{2,p}(\Omega) $, which implies that $ u\in E_{\delta,Z}. $ Thus $ -\delta^2\text{div}(K(x)\nabla)=Q_{\delta}(-\delta^2\text{div}(K(x)\nabla)) $ is a one to one and onto map from $ E_{\delta,Z} $ to $ F_{\delta, Z}. $

For any $ h\in F_{\delta,Z} $,  $ Q_\delta L_\delta u=h $ is equivalent to
\begin{equation}\label{214}
u=(Q_{\delta}(-\delta^2\text{div}(K(x)\nabla)))^{-1}pQ_\delta(V_{\delta,Z}-q)^{p-1}_+u+(Q_{\delta}(-\delta^2\text{div}(K(x)\nabla)))^{-1}h,\ \ u\in E_{\delta,Z}.
\end{equation}
Note that $  (Q_{\delta}(-\delta^2\text{div}(K(x)\nabla)))^{-1}pQ_\delta(V_{\delta,Z}-q)^{p-1}_+u $ is a compact operator in $ E_{\delta,Z}, $ by the Fredholm alternative, \eqref{214} is solvable if and only if
\begin{equation*}
u=(Q_{\delta}(-\delta^2\text{div}(K(x)\nabla)))^{-1}pQ_\delta(V_{\delta,Z}-q)^{p-1}_+u
\end{equation*}
has only trivial solution, which is true since $ Q_\delta L_\delta  $ is one to one. So $ Q_\delta L_\delta $ is  an onto  map from $ E_{\delta,Z} $ to $ F_{\delta, Z}  $ and the proof is complete.
\end{proof}

\section{Solvability of a nonlinear equation}


In this section, we solve solutions $ \omega\in E_{\delta,Z}  $ of the following nonlinear equation
\begin{equation}\label{216}
Q_\delta L_\delta \omega=Q_\delta l_\delta+Q_\delta R_\delta(\omega),
\end{equation}
or equivalently,
\begin{equation*}
\begin{split}
\omega=T_\delta(\omega):=(Q_\delta L_\delta)^{-1}Q_\delta l_\delta+(Q_\delta L_\delta)^{-1}Q_\delta R_\delta(\omega).
\end{split}
\end{equation*}
We have
\begin{proposition}\label{exist and uniq of w}
There exists $ \delta_0>0, $ such that for any $ \gamma\in (0,1) $, $ 0<\delta<\delta_0 $ and $ Z\in \Lambda_{\varepsilon, m} $, \eqref{216} has the unique solution $ \omega_{\delta,Z}\in E_{\delta,Z} $ with
\begin{equation*}
||\omega_{\delta,Z}||_{L^\infty(\Omega)}=O\left(\frac{\varepsilon^\gamma}{|\ln\varepsilon|}\right).
\end{equation*}
\end{proposition}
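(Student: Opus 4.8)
The plan is to rewrite \eqref{216} as the fixed point problem $\omega=T_\delta(\omega)$ and to solve it by the Banach contraction principle in the ball
\begin{equation*}
\mathcal{B}_\delta=\Big\{\omega\in E_{\delta,Z}\ \Big|\ \|\omega\|_{L^\infty(\Omega)}\le C_0\,\frac{\varepsilon^\gamma}{|\ln\varepsilon|}\Big\},
\end{equation*}
for a large constant $C_0$ to be fixed at the end; here $\gamma\in(0,1)$ is fixed and all constants are allowed to depend on it. The first point is that, since the correction terms in the definition \eqref{206} of $Q_\delta$ are supported in $\cup_jB_{s_{\delta,j}}(z_j)$, and since $l_\delta$ and $R_\delta(\omega)$ (for $\omega\in\mathcal{B}_\delta$) vanish outside $\cup_jB_{Ls_{\delta,j}}(z_j)$ by Lemma \ref{lemA-5}, the functions $Q_\delta l_\delta$ and $Q_\delta R_\delta(\omega)$ are also supported in $\cup_jB_{Ls_{\delta,j}}(z_j)$. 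Hence $u=(Q_\delta L_\delta)^{-1}(Q_\delta l_\delta)$ and $u=(Q_\delta L_\delta)^{-1}(Q_\delta R_\delta(\omega))$ satisfy $Q_\delta L_\delta u=0$ outside those balls, so Lemma \ref{coercive esti} applies and yields
\begin{equation*}
\big\|(Q_\delta L_\delta)^{-1}h\big\|_{L^\infty(\Omega)}\le \frac{|\ln\varepsilon|^{p-1}}{\rho_0\,\varepsilon^{2/p}}\,\|h\|_{L^p(\Omega)}
\end{equation*}
for such $h$; combined with $\|Q_\delta u\|_{L^p}\le C\|u\|_{L^p}$ (recorded right after \eqref{coef of C}), this reduces everything to $L^p$-estimates of $l_\delta$ and $R_\delta(\omega)$.

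To estimate $l_\delta$, note that on $B_{Ls_{\delta,i}}(z_i)$ the bubbles $j\ne i$ do not contribute because $(V_{\delta,z_j,\hat q_{\delta,j}}-\hat q_{\delta,j})_+$ is supported in $\{|T_{z_j}(x-z_j)|\le s_{\delta,j}\}$, while \eqref{203} gives $V_{\delta,Z}-q=(V_{\delta,z_i,\hat q_{\delta,i}}-\hat q_{\delta,i})+O(\varepsilon^\gamma/|\ln\varepsilon|)$ with $|V_{\delta,z_i,\hat q_{\delta,i}}-\hat q_{\delta,i}|_+\le C/|\ln\varepsilon|$. Using $|a_+^p-b_+^p|\le p\,(\max\{a_+,b_+\})^{p-1}|a-b|$ for $p\ge1$ one gets $|l_\delta|\le C\varepsilon^\gamma/|\ln\varepsilon|^p$ pointwise, and since $l_\delta$ vanishes outside the balls and $|B_{Ls_{\delta,i}}(z_i)|\le C\varepsilon^2$,
\begin{equation*}
\|l_\delta\|_{L^p(\Omega)}\le C\,\frac{\varepsilon^{2/p+\gamma}}{|\ln\varepsilon|^p},\qquad\text{hence}\qquad\big\|(Q_\delta L_\delta)^{-1}Q_\delta l_\delta\big\|_{L^\infty(\Omega)}\le \frac{C}{\rho_0}\,\frac{\varepsilon^\gamma}{|\ln\varepsilon|}.
\end{equation*}

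For the quadratic remainder $R_\delta(\omega)$ I would use that $t\mapsto t_+^p$ has a Lipschitz derivative when $p\ge2$ and a Hölder-$(p-1)$ derivative when $1<p<2$: for $p\ge2$, $|R_\delta(\omega)|\le C(|V_{\delta,Z}-q|_++|\omega|)^{p-2}|\omega|^2\le C|\ln\varepsilon|^{-(p-2)}\|\omega\|_{L^\infty}^2$; for $1<p<2$, $|R_\delta(\omega)|\le C|\omega|^p$. In both cases $R_\delta(\omega)$ is supported in $\cup_jB_{Ls_{\delta,j}}(z_j)$ once $C_0\varepsilon^\gamma/|\ln\varepsilon|$ is below the negative value $-c\ln L/|\ln\varepsilon|$ that $V_{\delta,Z}-q$ attains outside those balls, so $\|R_\delta(\omega)\|_{L^p}\le C\kappa(\delta)\,\varepsilon^{2/p+\gamma}/|\ln\varepsilon|^p$ on $\mathcal{B}_\delta$ with $\kappa(\delta)\to0$; the mean value theorem gives likewise $\|R_\delta(\omega_1)-R_\delta(\omega_2)\|_{L^p}\le C\kappa(\delta)\,\varepsilon^{2/p}\|\omega_1-\omega_2\|_{L^\infty}$. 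Feeding these bounds into the inverse estimate shows $\|T_\delta(\omega)\|_{L^\infty}\le \frac{C}{\rho_0}\,\varepsilon^\gamma/|\ln\varepsilon|+o(\varepsilon^\gamma/|\ln\varepsilon|)$, so $T_\delta(\mathcal{B}_\delta)\subseteq\mathcal{B}_\delta$ for $C_0$ large and $\delta$ small, and $\|T_\delta(\omega_1)-T_\delta(\omega_2)\|_{L^\infty}\le\tfrac{1}{2}\|\omega_1-\omega_2\|_{L^\infty}$ for $\delta$ small. The contraction principle produces a unique fixed point $\omega_{\delta,Z}\in\mathcal{B}_\delta$ of \eqref{216} with the claimed bound; uniqueness in all of $E_{\delta,Z}$ follows since any solution of \eqref{216} satisfies the same a priori estimate and hence lies in $\mathcal{B}_\delta$. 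I expect the main obstacle to be the sharp bound $\|l_\delta\|_{L^p}=O(\varepsilon^{2/p+\gamma}/|\ln\varepsilon|^p)$ — this is precisely where the choice \eqref{q_i choice} of $\hat q_{\delta,j}$ and the Green's function expansion of Lemma \ref{Green expansion2} (via \eqref{203}) are needed — together with the careful treatment of the merely Hölder-continuous nonlinearity when $1<p<2$.
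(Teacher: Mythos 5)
Your proposal is correct and follows essentially the same route as the paper: a contraction-mapping argument for $T_\delta$ based on the coercive estimate of Lemma \ref{coercive esti}, the support localization from Lemma \ref{lemA-5}, and the bounds $\|l_\delta\|_{L^p}=O(\varepsilon^{2/p+\gamma}/|\ln\varepsilon|^p)$ and $\|R_\delta(\omega)\|_{L^p}=o(\varepsilon^{2/p+\gamma}/|\ln\varepsilon|^p)$. The only differences are minor: the paper contracts in the larger ball $\|\omega\|_{L^\infty}\le|\ln\varepsilon|^{-(2-\theta_0)}$ and extracts the sharper bound $O(\varepsilon^\gamma/|\ln\varepsilon|)$ afterward from \eqref{217}, whereas you work directly in the ball of radius $C_0\varepsilon^\gamma/|\ln\varepsilon|$ (so your uniqueness statement is nominally in a smaller neighborhood) and, unlike the paper, you explicitly spell out the H\"older case $1<p<2$ of the remainder estimate, which the paper's written bound $\|R_\delta(\omega)\|_{L^p}\le C\varepsilon^{2/p}|\ln\varepsilon|^{-(p-2)}\|\omega\|_{L^\infty}^2$ glosses over.
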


\begin{proof}
It follows from  Lemma \ref{lemA-5} that for $ L $ sufficiently large and $ \delta $ small,
\begin{equation*}
(V_{\delta,Z}-q)_+=0,\ \ \ \ \text{in}\ \Omega \backslash\cup_{i=1}^mB_{Ls_{\delta,i}}(z_{i}).
\end{equation*}
Let $ \mathcal{N}= E_{\delta,Z} \cap\{\omega\mid ||\omega||_{L^\infty(\Omega)}\leq \frac{1}{|\ln\varepsilon|^{2-\theta_0}}\}$ for some $ \theta_0\in(0,1). $ Then $ \mathcal{N} $ is complete under $ L^\infty $ norm and $ T_\delta $ is a map from $ E_{\delta,Z} $ to $ E_{\delta,Z} $. We now prove that $ T_\delta $ is a contraction map from $ \mathcal{N} $ to $ \mathcal{N} $.

First, we claim that $ T_\delta $ is a map from $ \mathcal{N} $ to $ \mathcal{N} $. For any $ \omega\in \mathcal{N} $, by Lemma \ref{lemA-5} we get that for $ L>1 $ large and  $ \delta $ small,
\begin{equation*}
(V_{\delta,Z}+\omega-q)_+=0,\ \ \ \ \text{in}\ \Omega \backslash\cup_{i=1}^mB_{Ls_{\delta,i}}(z_{i}).
\end{equation*}
So $ l_\delta=R_\delta(\omega)=0 $ in $ \Omega \backslash\cup_{i=1}^mB_{Ls_{\delta,i}}(z_{i}). $ By the definition of $ Q_\delta $,
\begin{equation*}
Q_\delta l_\delta+Q_\delta R_\delta(\omega)=0,\ \ \ \ \text{in}\ \Omega \backslash\cup_{i=1}^mB_{Ls_{\delta,i}}(z_{i}).
\end{equation*}
Thus by Lemma \ref{coercive esti}, we obtain
\begin{equation*}
||T_\delta(\omega)||_{L^\infty}=||(Q_\delta L_\delta)^{-1}(Q_\delta l_\delta+Q_\delta R_\delta(\omega))||_{L^\infty}\leq C\frac{|\ln\varepsilon|^{p-1}}{\varepsilon^{\frac{2}{p}}}||Q_\delta l_\delta+Q_\delta R_\delta(\omega)||_{L^p}.
\end{equation*}
Note that
\begin{equation*}
||Q_\delta l_\delta+Q_\delta R_\delta(\omega)||_{L^p}\leq C(|| l_\delta||_{L^p}+ ||R_\delta(\omega)||_{L^p}).
\end{equation*}
It follows from \eqref{203}, the definition of $ l_\delta, R_\delta(\omega) $ and Lemma \ref{lemA-5} that
\begin{equation*}
\begin{split}
||l_\delta||_{L^p}=&||(V_{\delta,Z}-q)^p_+-\sum_{j=1}^m(V_{\delta,z_j,\hat{q}_{\delta,j}}-\hat{q}_{\delta,j})^p_+||_{L^p}\leq C\frac{\varepsilon^{\frac{2}{p}+\gamma}}{|\ln\varepsilon|^{p}},
\end{split}
\end{equation*}
and
\begin{equation*}
\begin{split}
||R_\delta(\omega)||_{L^p}=||(V_{\delta,Z}+\omega-q)^p_+-(V_{\delta,Z}-q)^p_+-p(V_{\delta,Z}-q)^{p-1}_+\omega||_{L^p}\leq C\frac{\varepsilon^{\frac{2}{p}}}{|\ln\varepsilon|^{p-2}}||\omega||_{L^\infty}^2.
\end{split}
\end{equation*}
Hence we get
\begin{equation}\label{217}
\begin{split}
||T_\delta(\omega)||_{L^\infty}\leq C\varepsilon^{-\frac{2}{p}}|\ln\varepsilon|^{p-1}\left( \frac{\varepsilon^{\frac{2}{p}+\gamma}}{|\ln\varepsilon|^{p}}+\frac{\varepsilon^{\frac{2}{p}}}{|\ln\varepsilon|^{p-2}}||\omega||_{L^\infty}^2\right)
\leq \frac{1}{|\ln\varepsilon|^{2-\theta_0}}.
\end{split}
\end{equation}
So $ T_\delta $ is a map from $ \mathcal{N}$ to $ \mathcal{N} $.

Then we prove that  $ T_\delta $ is a contraction map. For any $ \omega_1,\omega_2\in \mathcal{N} $,
\begin{equation*}
T_\delta(\omega_1)-T_\delta(\omega_2)=(Q_\delta L_\delta)^{-1}Q_\delta(R_\delta(\omega_1)-R_\delta(\omega_2)).
\end{equation*}
Note that $ R_\delta(\omega_1)=R_\delta(\omega_2)=0 $ in $ \Omega \backslash\cup_{i=1}^mB_{Ls_{\delta,i}}(z_{i}). $ By Lemma \ref{coercive esti} and the definition of $ \mathcal{N} $, for $ \delta $ sufficiently small
\begin{equation*}
\begin{split}
||T_\delta(\omega_1)-T_\delta(\omega_2)||_{L^\infty}\leq &C\varepsilon^{-\frac{2}{p}}|\ln\varepsilon|^{p-1}||R_\delta(\omega_1)-R_\delta(\omega_2)||_{L^p}\\
\leq &C\varepsilon^{-\frac{2}{p}}|\ln\varepsilon|^{p-1}\varepsilon^{\frac{2}{p}}\left(\frac{||\omega_1||_{L^\infty}+||\omega_2||_{L^\infty}}{|\ln\varepsilon|^{p-2}} \right) ||\omega_1-\omega_2||_{L^\infty}\\
\leq& \frac{1}{2}||\omega_1-\omega_2||_{L^\infty}.
\end{split}
\end{equation*}
So $ T_\delta $ is a contraction map.

To conclude,  $ T_\delta $ is a contraction map from $ \mathcal{N} $ to $ \mathcal{N} $ and thus there is a unique $ \omega_{\delta,Z}\in \mathcal{N} $ such that $ \omega_{\delta,Z}=T_\delta(\omega_{\delta,Z}) $. Moreover, by \eqref{217}  we have $ ||\omega_{\delta,Z}||_{L^\infty(\Omega)}=O\left( \frac{\varepsilon^\gamma}{|\ln\varepsilon|}\right).  $

\end{proof}


The result of Proposition \ref{exist and uniq of w} implies that there exists a unique solution $ \omega_{\delta,Z}\in E_{\delta,Z} $ to \eqref{216}. 
This implies that for some $ b_{j,h}=b_{j,h}(Z) $
\begin{equation}\label{5-000}
L_\delta \omega_{\delta,Z}= l_\delta+  R_\delta(\omega_{\delta,Z})+\sum_{j=1}^m\sum_{h=1}^2b_{j,h}\left( -\delta^2\text{div}\left (K(z_j)\nabla   \frac{\partial V_{\delta, z_j, \hat{q}_{\delta,j}}}{\partial x_h}\right )\right),
\end{equation}
or equivalently
\begin{equation}\label{5-00}
-\delta^2\text{div}(K(x)\nabla (V_{\delta,Z}+\omega_{\delta,Z}))-(V_{\delta,Z}+\omega_{\delta,Z}-q)^p_+=\sum_{j=1}^m\sum_{h=1}^2b_{j,h}\left( -\delta^2\text{div}\left (K(z_j)\nabla   \frac{\partial V_{\delta, z_j, \hat{q}_{\delta,j}}}{\partial x_h}\right )\right).
\end{equation}
At the end of this section, we give some properties of the differentiability of $ \omega_{\delta,Z}  $ with respect to the variable $ Z $, which will be used in the next section. Using the similar method as that in \cite{CPY1,CPY,WYZ}, we  can estimate the $ L^\infty $ norm of  $ \frac{\partial \omega_{\delta,Z}}{\partial z_{i,h}} $ and show that $ \omega_{\delta,Z} $ is a $ C^1 $ map of $ Z $ in $ H^1_0 (\Omega) $.

\begin{proposition}\label{differ of w}
Let $ \omega_{\delta,Z} $ be the function obtained in Proposition \ref{exist and uniq of w}. Then $ \omega_{\delta,Z} $ is a $ C^1 $ map of $ Z $ in the
norm of $ H^1_0 (\Omega) $, and for any $ \gamma\in (0,1) $, $ l=1,\cdots,m $, $ \bar{h}=1,2 $
\begin{equation*}
\bigg|\bigg|\frac{\partial \omega_{\delta,Z}}{\partial z_{l,\bar{h}}}\bigg|\bigg|_{L^\infty(\Omega)}=O\left (\frac{1}{\varepsilon^{1-\gamma}|\ln\varepsilon|}\right ).
\end{equation*}

\end{proposition}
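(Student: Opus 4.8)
The plan is to differentiate the fixed-point equation \eqref{5-00} (equivalently \eqref{5-000}) with respect to the parameter $z_{l,\bar h}$, to solve the resulting linear problem for the formal derivative by means of the coercive estimate of Lemma \ref{coercive esti} and the invertibility in Proposition \ref{one to one and onto}, and finally to check that the difference quotients of $\omega_{\delta,Z}$ actually converge to it. This is the same scheme used in \cite{CPY1,CPY,WYZ}, but here it relies crucially on the $C^1$-expansion of $G_K$ from Lemma \ref{Green expansion2}.

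The first step is to record the $Z$-regularity of all the ingredients. The parameters $\hat q_{\delta,j}=\hat q_{\delta,j}(Z)$ are defined implicitly by the system \eqref{q_i choice}, whose coefficients are assembled from $\bar S_K(z_i,z_i)$ and $G_K(z_i,z_j)$; since these are $C^1$ in their arguments by Lemma \ref{Green expansion2}, the implicit function theorem gives $Z\mapsto\hat q_{\delta,j}(Z)\in C^1$, with $\partial_{z_{l,\bar h}}\hat q_{\delta,j}$ of size $O(|\ln\varepsilon|^{M-1})$ (dominated by the derivative of $G_K(z_i,z_j)$, of order $|z_i-z_j|^{-1}$). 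Hence $V_{\delta,Z}$, the projections $H_{\delta,z_j,\hat q_{\delta,j}}$ and the cut-offs $\eta_j(x)=\eta((x-z_j)|\ln\varepsilon|^{M+1})$ are $C^1$ in $Z$, with $\|\partial_{z_{l,\bar h}}V_{\delta,Z}\|_{L^\infty}=O(\varepsilon^{-1})$ up to logarithmic factors (the worst contribution coming from \eqref{200} in the core region), $\|\partial_{z_{l,\bar h}}\eta_j\|_{L^\infty}=O(|\ln\varepsilon|^{M+1})$ and $\|\nabla\partial_{z_{l,\bar h}}\eta_j\|_{L^\infty}=O(|\ln\varepsilon|^{2M+2})$, these last two being supported in $B_{|\ln\varepsilon|^{-M-1}}(z_j)\setminus B_{|\ln\varepsilon|^{-M-1}/2}(z_j)$. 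One has to note that $V_{\delta,z_j,\hat q_{\delta,j}}$ is only $C^1$, not $C^2$, in $x$; but since $\phi$ solves $-\Delta\phi=\phi^p$ in $B_1(0)$, the glueing at $|T_{z_j}(x-z_j)|=s_{\delta,j}$ in \eqref{eq2-2} is $C^1$, so $\partial_{z_{l,\bar h}}V_{\delta,Z}$ is well defined and the $L^p$-elliptic theory below applies with no boundary-layer loss.

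The second step is the formal derivative. Writing $\omega'=\partial_{z_{l,\bar h}}\omega_{\delta,Z}$ and differentiating the orthogonality conditions defining $E_{\delta,Z}$ shows that $\omega'\notin E_{\delta,Z}$; one therefore writes $\omega'=\bar\omega+\sum_{j,h}c_{j,h}\,\eta_j\frac{\partial V_{\delta,Z,j}}{\partial x_h}$, the coefficients $c_{j,h}$ being uniquely determined by the nondegeneracy of the matrix in \eqref{coef of C} and satisfying $|c_{j,h}|=O(\varepsilon^{\gamma-1}|\ln\varepsilon|^{M})$, so that $\bar\omega\in E_{\delta,Z}$. Projecting the differentiated equation \eqref{5-00} by $Q_\delta$, $\bar\omega$ solves $Q_\delta L_\delta\bar\omega=Q_\delta\mathcal{R}_{\delta,Z}$, where $\mathcal{R}_{\delta,Z}$ collects $\partial_{z_{l,\bar h}}l_\delta$, $\partial_{z_{l,\bar h}}R_\delta(\omega_{\delta,Z})$, the terms from $\partial_{z_{l,\bar h}}$ falling on $p(V_{\delta,Z}-q)^{p-1}_+$, on the cut-offs, and on the span $\eta_j\frac{\partial V_{\delta,Z,j}}{\partial x_h}$, plus the multiplier terms $\partial_{z_{l,\bar h}}b_{j,h}$ (estimated exactly as the $b_{j,h}$ were in the proof of Lemma \ref{coercive esti}). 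Using \eqref{203}, Lemma \ref{lemA-5}, \eqref{3-001}, \eqref{200} and Proposition \ref{exist and uniq of w} (where $\|\omega_{\delta,Z}\|_{L^\infty}=O(\varepsilon^\gamma/|\ln\varepsilon|)$), one obtains $\|\mathcal{R}_{\delta,Z}\|_{L^p(\Omega)}=O(\varepsilon^{2/p+\gamma-1}/|\ln\varepsilon|^{p})$, and since $\text{supp}\,\mathcal{R}_{\delta,Z}\subset\cup_j B_{Ls_{\delta,j}}(z_j)$, Lemma \ref{coercive esti} together with Proposition \ref{one to one and onto} provides a unique $\bar\omega\in E_{\delta,Z}$ with $\|\bar\omega\|_{L^\infty}=O(\varepsilon^{-2/p}|\ln\varepsilon|^{p-1}\|\mathcal{R}_{\delta,Z}\|_{L^p})=O(\varepsilon^{\gamma-1}/|\ln\varepsilon|)$; adding back the span part yields $\|\omega'\|_{L^\infty}=O(\varepsilon^{-(1-\gamma)}/|\ln\varepsilon|)$. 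One may first carry this out in $H^1_0(\Omega)$, which is what the statement requires, and then upgrade to $L^\infty$ by elliptic regularity.

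The third step is to justify that $\omega'$ is the genuine derivative. The uniqueness in Proposition \ref{exist and uniq of w} and the uniformity of the contraction estimates give that $Z\mapsto\omega_{\delta,Z}$ is continuous in $H^1_0(\Omega)$. Setting $e_t:=t^{-1}(\omega_{\delta,Z+t\mathbf e}-\omega_{\delta,Z})-\omega'$ with $\mathbf e$ the unit vector in the $z_{l,\bar h}$ direction, subtracting the two copies of \eqref{5-00} and dividing by $t$, one finds that an $E_{\delta,Z}$-projection of $e_t$ satisfies $Q_\delta L_\delta e_t=Q_\delta g_t$ with $\|g_t\|_{L^p}\to0$ as $t\to0$, the smallness coming from the continuity of $Z\mapsto\omega_{\delta,Z}$ and the joint $C^1$-regularity in $(x,Z)$ of all coefficients (in particular of $\bar H_1(x,y)$ and of the Robin function $\bar S_K(x,x)$, by Lemma \ref{Green expansion2}); the coercive estimate of Lemma \ref{coercive esti} then forces $\|e_t\|_{H^1_0}\to0$. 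Repeating the argument for the difference quotients of $\omega'$ shows $Z\mapsto\omega'$ is continuous, so $\omega_{\delta,Z}$ is a $C^1$ map of $Z$ into $H^1_0(\Omega)$. The main obstacle throughout is the bookkeeping forced by the $Z$-dependence of the varying space $E_{\delta,Z}$ and of the cut-offs $\eta_j$: differentiation in $z_j$ produces the large factors $|\ln\varepsilon|^{M+1}$ and $|\ln\varepsilon|^{2M+2}$, and one must verify — using that these derivatives live on the annulus where $V_{\delta,Z}-q<0$ by Lemma \ref{lemA-5}, hence are multiplied by the $\varepsilon^{2/p}$ gained from the small supports — that they are absorbed and do not spoil the final bound.
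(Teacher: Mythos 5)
Your overall architecture --- implicit differentiation of \eqref{5-00}, decomposition of the formal derivative into a piece in $E_{\delta,Z}$ plus a span of (cut-off) derivatives of the bubbles, the coercive estimate of Lemma \ref{coercive esti}, and a difference-quotient argument for the $C^1$ claim --- is exactly the paper's, so the route is the same. However, two quantitative points, as you state them, would break the proof. First, your bound $|c_{j,h}|=O(\varepsilon^{\gamma-1}|\ln\varepsilon|^{M})$ on the span coefficients is inconsistent with your conclusion: since $\|\eta_j\,\partial V_{\delta,Z,j}/\partial x_h\|_{L^\infty}=O(\varepsilon^{-1}|\ln\varepsilon|^{-1})$ by \eqref{200}, the span part would then contribute $O(\varepsilon^{\gamma-2}|\ln\varepsilon|^{M-1})$ to $\|\omega'\|_{L^\infty}$, which overwhelms the target $O(\varepsilon^{\gamma-1}/|\ln\varepsilon|)$. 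The correct bound is $O(\varepsilon^{\gamma})$, and it does not follow from nondegeneracy of the Gram matrix alone: one must first differentiate the orthogonality constraints $\int_\Omega\delta^2\left(K(x)\nabla\omega_{\delta,Z}\,\big|\,\nabla\big(\eta_i\,\partial V_{\delta,Z,i}/\partial x_\hbar\big)\right)=0$ in $z_{l,\bar h}$ and integrate by parts, so that the data determining the coefficients is seen to be $\delta^{-2}O(\varepsilon^\gamma/|\ln\varepsilon|^{p+1})$ (this is the content of \eqref{5-12}--\eqref{5-14} and \eqref{5-20}); a direct bound on $\int\left(K\nabla\omega'\,|\,\nabla(\eta_i\,\partial V_{\delta,Z,i}/\partial x_\hbar)\right)$ is unavailable because $\omega'$ is the unknown.

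Second, the bound $\|\mathcal{R}_{\delta,Z}\|_{L^p}=O(\varepsilon^{2/p+\gamma-1}/|\ln\varepsilon|^{p})$ cannot hold outright: differentiating $R_\delta(\omega_{\delta,Z})$ in $Z$ produces the term $p\big((V_{\delta,Z}+\omega_{\delta,Z}-q)^{p-1}_+-(V_{\delta,Z}-q)^{p-1}_+\big)\,\partial\omega_{\delta,Z}/\partial z_{l,\bar h}$, whose $L^p$ norm is $O\big(\varepsilon^{\gamma+2/p}|\ln\varepsilon|^{-(p-1)}\|\partial\omega_{\delta,Z}/\partial z_{l,\bar h}\|_{L^\infty}\big)$ and therefore depends on the very quantity being estimated. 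The argument closes only through an absorption step as in \eqref{5-22}: after applying Lemma \ref{coercive esti}, the coefficient multiplying $\|\partial\omega_{\delta,Z}/\partial z_{l,\bar h}\|_{L^\infty}$ on the right is $O(\varepsilon^\gamma)$ and can be moved to the left. Your closing remark about absorption addresses only the cut-off factors $|\ln\varepsilon|^{M+1}$ and $|\ln\varepsilon|^{2M+2}$; the self-referential nonlinear term is the one that genuinely requires the bootstrap. With these two corrections your argument coincides with the paper's.
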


\begin{proof}
Note that from Lemma \ref{Green expansion2}, the regular part of Green's function $ \bar{S}_K(x,x)\in C^1(\Omega) $. Thus taking $ \frac{\partial}{\partial z_{l,\bar{h}}} $ in  \eqref{q_i choice}, we get
\begin{equation}\label{5-01}
\begin{split}
\frac{\partial \hat{q}_{\delta,i}}{\partial z_{l,\bar{h}}}=&\frac{\partial q}{\partial x_{\bar{h}}}(z_i)\delta_{i,l}+\sum_{j\neq l}\frac{2\pi\hat{q}_{\delta,j}\sqrt{\det K(z_j)}}{\ln s_{\delta,j}}\frac{\partial G_K(z_l,z_j)}{\partial z_{l,\bar{h}}}+o(1)\sum_{j=1}^m|\frac{\partial \hat{q}_{\delta,i}}{\partial z_{l,\bar{h}}}|+o(1)\\
=&O\left (|\ln\varepsilon|^{M}\right ),
\end{split}
\end{equation}
where we have used $ |\nabla_{z_l} G(z_l, z_j)|\leq C\frac{1}{|z_l-z_j|}\leq C|\ln\varepsilon|^M $ for $ Z\in \Lambda_{\varepsilon, m}. $ By the definition of $ V_{\delta, z_j, \hat{q}_{\delta,j}} $ and \eqref{5-01}, we get
\begin{equation}\label{5-02}
\bigg|\bigg|\frac{\partial V_{\delta, z_j, \hat{q}_{\delta,j}}}{\partial z_{l,\bar{h}}}\bigg|\bigg|_{L^\infty(\Omega)}=O\left (\frac{1}{\varepsilon|\ln\varepsilon|}\right )+O\left (|\ln\varepsilon|^{M}\right )=O\left (\frac{1}{\varepsilon|\ln\varepsilon|}\right ).
\end{equation}
Using the definition of $ H_{\delta, z_j, \hat{q}_{\delta,j}} $ in \eqref{eq3} and the $ L^p $-theory of elliptic equations, one has
\begin{equation}\label{5-03}
\bigg|\bigg|\frac{\partial H_{\delta, z_j, \hat{q}_{\delta,j}}}{\partial z_{l,\bar{h}}}\bigg|\bigg|_{W^{1,p}(\Omega)}\leq
\begin{cases}
\frac{C}{\varepsilon^{1-\frac{2}{p}}|\ln\varepsilon|},\ \ &p>2,\\
C,\ \ &p=2,\\
\frac{C}{|\ln\varepsilon|},\ \ &1\leq p<2.
\end{cases}
\end{equation}
Combining \eqref{5-02} and \eqref{5-03}, we have
\begin{equation}\label{5-04}
\bigg|\bigg|\frac{\partial V_{\delta, Z, j}}{\partial z_{l,\bar{h}}}\bigg|\bigg|_{L^\infty(\Omega)}=O\left (\frac{1}{\varepsilon|\ln\varepsilon|}\right ).
\end{equation}

Now we calculate the $ L^\infty $ norm of $ \frac{\partial \omega_{\delta,Z}}{\partial z_{l,\bar{h}}}. $ Note that from \eqref{5-00}, $ b_{j,h} $ is determined by
\begin{equation}\label{5-05}
\begin{split}
\sum_{j=1}^m\sum_{h=1}^2&b_{j,h}\int_\Omega\left( -\delta^2\text{div}\left (K(z_j)\nabla   \frac{\partial V_{\delta, z_j, \hat{q}_{\delta,j}}}{\partial x_h}\right )\right)\left( \eta_i\frac{\partial V_{\delta, Z,i}}{\partial x_\hbar}\right)\\
=&\int_\Omega\delta^2\left (K(x)\nabla(V_{\delta,Z}+\omega_{\delta,Z})|\nabla\left( \eta_i\frac{\partial V_{\delta, Z,i}}{\partial x_\hbar}\right)\right )-\int_\Omega(V_{\delta,Z}+\omega_{\delta,Z}-q)^p_+\left( \eta_i\frac{\partial V_{\delta, Z,i}}{\partial x_\hbar}\right)\\
=&\int_\Omega\delta^2\left (K(x)\nabla V_{\delta,Z}|\nabla\left( \eta_i\frac{\partial V_{\delta, Z,i}}{\partial x_\hbar}\right)\right )-\int_\Omega(V_{\delta,Z}+\omega_{\delta,Z}-q)^p_+\left( \eta_i\frac{\partial V_{\delta, Z,i}}{\partial x_\hbar}\right)\\
=&\int_\Omega\left( \sum_{j=1}^m(V_{\delta, z_j, \hat{q}_{\delta,j}}-\hat{q}_{\delta,j})^{p}_+-\left (V_{\delta,Z}+\omega_{\delta,Z}-q\right )^{p}_+\right) \left( \eta_i\frac{\partial V_{\delta, Z,i}}{\partial x_\hbar}\right),
\end{split}
\end{equation}
where we have used $ \omega_{\delta,Z}\in E_{\delta,Z} $. By Lemma \ref{lemA-5},
\begin{equation*}
\begin{split}
\int_\Omega&\left( \sum_{j=1}^m(V_{\delta, z_j, \hat{q}_{\delta,j}}-\hat{q}_{\delta,j})^{p}_+-\left (V_{\delta,Z}+\omega_{\delta,Z}-q\right )^{p}_+\right) \left( \eta_i\frac{\partial V_{\delta, Z,i}}{\partial x_\hbar}\right)\\
&=O\left (|\ln\varepsilon|^{-p+1}\left (\frac{\varepsilon^\gamma}{|\ln\varepsilon|}+||\omega_{\delta,Z}||_{L^\infty}\right )\int_{\cup_{j=1}^mB_{L\varepsilon}(z_j)}\bigg|\frac{\partial V_{\delta, Z,i}}{\partial x_\hbar} \bigg|\right )=O\left (\frac{\varepsilon^{1+\gamma}}{|\ln\varepsilon|^{p+1}}\right ).
\end{split}
\end{equation*}
Thus combining this with \eqref{coef of C} and \eqref{5-05}, we get
\begin{equation}\label{5-06}
b_{j,h}=O\left ( \varepsilon^{1+\gamma} \right ).
\end{equation}

Taking $ \frac{\partial}{\partial z_{l,\bar{h}}} $ in both sides of \eqref{5-05}, we obtain
\begin{equation}\label{5-07}
\begin{split}
\sum_{j=1}^m&\sum_{h=1}^2\frac{\partial b_{j,h}}{\partial z_{l,\bar{h}}}\int_\Omega\left( -\delta^2\text{div}\left (K(z_j)\nabla   \frac{\partial V_{\delta, z_j, \hat{q}_{\delta,j}}}{\partial x_h}\right )\right)\left( \eta_i\frac{\partial V_{\delta, Z,i}}{\partial x_\hbar}\right)\\
=&-\sum_{j=1}^m\sum_{h=1}^2b_{j,h} \frac{\partial }{\partial z_{l,\bar{h}}}\left\{  \int_\Omega\left( -\delta^2\text{div}\left (K(z_j)\nabla   \frac{\partial V_{\delta, z_j, \hat{q}_{\delta,j}}}{\partial x_h}\right )\right)\left( \eta_i\frac{\partial V_{\delta, Z,i}}{\partial x_\hbar}\right)\right\} \\
&+\frac{\partial }{\partial z_{l,\bar{h}}}\left\{\int_\Omega\left( \sum_{j=1}^m(V_{\delta, z_j, \hat{q}_{\delta,j}}-\hat{q}_{\delta,j})^{p}_+-\left (V_{\delta,Z}+\omega_{\delta,Z}-q\right )^{p}_+\right) \left( \eta_i\frac{\partial V_{\delta, Z,i}}{\partial x_\hbar}\right)\right\}.
\end{split}
\end{equation}
Note that from \eqref{5-02} and \eqref{5-03},
\begin{equation*}
 \frac{\partial }{\partial z_{l,\bar{h}}}\left\{  \int_\Omega\left( -\delta^2\text{div}\left (K(z_j)\nabla   \frac{\partial V_{\delta, z_j, \hat{q}_{\delta,j}}}{\partial x_h}\right )\right)\left( \eta_i\frac{\partial V_{\delta, Z,i}}{\partial x_\hbar}\right)\right\}=O\left( \frac{1}{\varepsilon|\ln\varepsilon|^{p+1}}\right)
\end{equation*}
and
\begin{equation*}
\begin{split}
 \int_\Omega&\left( \sum_{j=1}^m(V_{\delta, z_j, \hat{q}_{\delta,j}}-\hat{q}_{\delta,j})^{p}_+-\left (V_{\delta,Z}+\omega_{\delta,Z}-q\right )^{p}_+\right)  \frac{\partial }{\partial z_{l,\bar{h}}}\left( \eta_i\frac{\partial V_{\delta, Z,i}}{\partial x_\hbar}\right) \\
 &=O\left (\frac{1}{ |\ln\varepsilon|^{p-1}}\left (\frac{\varepsilon^\gamma}{|\ln\varepsilon|}+||\omega_{\delta,Z}||_{L^\infty}\right )\int_{\cup_{j=1}^mB_{L\varepsilon}(z_j)}\bigg|\frac{\partial^2 V_{\delta, Z,i}}{\partial z_{l,\bar{h}}\partial x_\hbar} \bigg|\right )=O\left (\frac{\varepsilon^{\gamma}}{|\ln\varepsilon|^{p+1}}\right ).
\end{split}
\end{equation*}
Inserting these into \eqref{5-07}, we obtain
\begin{equation}\label{5-08}
\begin{split}
 \frac{\partial b_{j,h}}{\partial z_{l,\bar{h}}}\cdot \frac{1}{|\ln\varepsilon|^{p+1}}=&O\left (\frac{ |b_{j,h}|}{\varepsilon|\ln\varepsilon|^{p+1}}+\frac{\varepsilon^{\gamma}}{|\ln\varepsilon|^{p+1}}\right )\\
 &+O\left(  \int_\Omega\frac{\partial }{\partial z_{l,\bar{h}}}\left( \sum_{j=1}^m(V_{\delta, z_j, \hat{q}_{\delta,j}}-\hat{q}_{\delta,j})^{p}_+-\left (V_{\delta,Z}+\omega_{\delta,Z}-q\right )^{p}_+\right) \left( \eta_i\frac{\partial V_{\delta, Z,i}}{\partial x_\hbar}\right) \right) .
\end{split}
\end{equation}
Using \eqref{203}, \eqref{5-01}, \eqref{5-02} and \eqref{5-03}, we have
\begin{equation*}\label{5-09}
\begin{split}
\frac{\partial }{\partial z_{l,\bar{h}}}&\left( \sum_{j=1}^m(V_{\delta, z_j, \hat{q}_{\delta,j}}-\hat{q}_{\delta,j})^{p}_+-\left (V_{\delta,Z}+\omega_{\delta,Z}-q\right )^{p}_+\right)\\
=& p\sum_{j=1}^m\left (V_{\delta, z_j, \hat{q}_{\delta,j}}-\hat{q}_{\delta,j}\right )^{p-1}_+\frac{\partial }{\partial z_{l,\bar{h}}}\left (V_{\delta, z_j, \hat{q}_{\delta,j}}-\hat{q}_{\delta,j}\right )-p\left (V_{\delta,Z}+\omega_{\delta,Z}-q\right )^{p-1}_+\frac{\partial }{\partial z_{l,\bar{h}}}\left(V_{\delta,Z}+\omega_{\delta,Z} \right)  \\
=&-p\left (V_{\delta,Z}+\omega_{\delta,Z}-q\right )^{p-1}_+\frac{\partial \omega_{\delta,Z} }{\partial z_{l,\bar{h}}}+p\sum_{j=1}^m\left( \left (V_{\delta, z_j, \hat{q}_{\delta,j}}-\hat{q}_{\delta,j}\right )^{p-1}_+-\left (V_{\delta,Z}+\omega_{\delta,Z}-q\right )^{p-1}_+\right)\frac{\partial V_{\delta, z_j, \hat{q}_{\delta,j}}}{\partial z_{l,\bar{h}}} \\
&-p\sum_{j=1}^m\left (V_{\delta,Z}+\omega_{\delta,Z}-q\right )^{p-1}_+\frac{\partial H_{\delta, z_j, \hat{q}_{\delta,j}}}{\partial z_{l,\bar{h}}}-p\sum_{j=1}^m\left (V_{\delta, z_j, \hat{q}_{\delta,j}}-\hat{q}_{\delta,j}\right )^{p-1}_+\frac{\partial \hat{q}_{\delta,j} }{\partial z_{l,\bar{h}}} \\
=&-p\left (V_{\delta,Z}+\omega_{\delta,Z}-q\right )^{p-1}_+\frac{\partial \omega_{\delta,Z} }{\partial z_{l,\bar{h}}}+O\left( \frac{1}{\varepsilon^{1-\gamma}|\ln\varepsilon|^p}\right),
\end{split}
\end{equation*}
from which we deduce,
\begin{equation}\label{5-10}
\begin{split}
\int_\Omega&\frac{\partial }{\partial z_{l,\bar{h}}}\left( \sum_{j=1}^m(V_{\delta, z_j, \hat{q}_{\delta,j}}-\hat{q}_{\delta,j})^{p}_+-\left (V_{\delta,Z}+\omega_{\delta,Z}-q\right )^{p}_+\right) \left( \eta_i\frac{\partial V_{\delta, Z,i}}{\partial x_\hbar}\right)\\
&=-p\int_\Omega \left (V_{\delta,Z}+\omega_{\delta,Z}-q\right )^{p-1}_+\frac{\partial \omega_{\delta,Z} }{\partial z_{l,\bar{h}}}\left( \eta_i\frac{\partial V_{\delta, Z,i}}{\partial x_\hbar}\right)+O\left( \frac{\varepsilon^\gamma}{ |\ln\varepsilon|^{p+1}}\right).
\end{split}
\end{equation}
Inserting \eqref{5-10} into \eqref{5-08}, we obtain
\begin{equation}\label{5-11}
\begin{split}
\frac{\partial b_{j,h}}{\partial z_{l,\bar{h}}}\cdot \frac{1}{|\ln\varepsilon|^{p+1}}=&O\left( \int_\Omega \left (V_{\delta,Z}+\omega_{\delta,Z}-q\right )^{p-1}_+\frac{\partial \omega_{\delta,Z} }{\partial z_{l,\bar{h}}}\left( \eta_i\frac{\partial V_{\delta, Z,i}}{\partial x_\hbar}\right)\right) +O\left( \frac{\varepsilon^\gamma}{ |\ln\varepsilon|^{p+1}}\right).
\end{split}
\end{equation}

Taking $ \frac{\partial}{\partial z_{l,\bar{h}}} $ in $ \int_{\Omega}\delta^2\left (K(x) \nabla\omega_{\delta,Z}| \nabla \left( \eta_i\frac{\partial V_{\delta, Z,i}}{\partial x_\hbar}\right)\right )=0$, one has
\begin{equation}\label{5-12}
\int_{\Omega}\delta^2 \left (K(x) \nabla \frac{\partial \omega_{\delta,Z} }{\partial z_{l,\bar{h}}}| \nabla \left( \eta_i\frac{\partial V_{\delta, Z,i}}{\partial x_\hbar}\right)\right )=-\int_{\Omega} \delta^2 \left (K(x) \nabla\omega_{\delta,Z}| \nabla \frac{\partial  }{\partial z_{l,\bar{h}}}\left( \eta_i\frac{\partial V_{\delta, Z,i}}{\partial x_\hbar}\right)\right ) .
\end{equation}
On the one hand, direct computation shows  that

\begin{equation}\label{5-13}
\begin{split}
-\int_{\Omega} \delta^2 \left (K(x) \nabla\omega_{\delta,Z}| \nabla \frac{\partial  }{\partial z_{l,\bar{h}}}\left( \eta_i\frac{\partial V_{\delta, Z,i}}{\partial x_\hbar}\right)\right )=\int_{\Omega} \delta^2 \text{div} \left (K(x) \nabla\omega_{\delta,Z}  \right )\frac{\partial  }{\partial z_{l,\bar{h}}}\left( \eta_i\frac{\partial V_{\delta, Z,i}}{\partial x_\hbar}\right).
\end{split}
\end{equation}
Note that $ ||l_\delta+R_\delta(\omega_{\delta,Z})||_{L^p}\leq C\frac{\varepsilon^{\frac{2}{p}+\gamma}}{|\ln\varepsilon|^{p}} $. By \eqref{5-000}  and \eqref{5-06},
\begin{equation*}
L_\delta \omega_{\delta,Z}= l_\delta+  R_\delta(\omega_{\delta,Z})+\sum_{j=1}^m\sum_{h=1}^2b_{j,h}\left( -\delta^2\text{div}\left (K(z_j)\nabla   \frac{\partial V_{\delta, z_j, \hat{q}_{\delta,j}}}{\partial x_h}\right )\right)=O\left( \frac{\varepsilon^{\frac{2}{p}+\gamma}}{|\ln\varepsilon|^{p}}\right) \  \ \ \text{in}\ L^p(\Omega),
\end{equation*}
which implies that
\begin{equation*}
\begin{split}
-\delta^2 \text{div}\left (K(x) \nabla\omega_{\delta,Z}  \right )=L_\delta \omega_{\delta,Z}+p\left( V_{\delta,Z}-q\right)^{p-1}_+\omega_{\delta,Z}= O\left( \frac{\varepsilon^{\frac{2}{p}+\gamma}}{|\ln\varepsilon|^{p}}\right) \  \ \ \text{in}\ L^p(\Omega).
\end{split}
\end{equation*}
Taking this into \eqref{5-13} and using \eqref{5-03}, one has
\begin{equation}\label{5-14}
\begin{split}
-\int_{\Omega} \delta^2 \left (K(x) \nabla\omega_{\delta,Z}| \nabla \frac{\partial  }{\partial z_{l,\bar{h}}}\left( \eta_i\frac{\partial V_{\delta, Z,i}}{\partial x_\hbar}\right)\right )=O\left( \frac{\varepsilon^\gamma}{ |\ln\varepsilon|^{p+1}}\right).
\end{split}
\end{equation}
On the other hand,
\begin{equation}\label{5-15}
\begin{split}
\int_{\Omega}\delta^2& \left (K(x) \nabla \frac{\partial \omega_{\delta,Z} }{\partial z_{l,\bar{h}}}| \nabla \left( \eta_i\frac{\partial V_{\delta, Z,i}}{\partial x_\hbar}\right)\right )=\int_{\Omega}\frac{\partial \omega_{\delta,Z} }{\partial z_{l,\bar{h}}}-\delta^2  \text{div}\left (K(x) \nabla \left( \eta_i\frac{\partial V_{\delta, Z,i}}{\partial x_\hbar}\right)\right )\\
=&\int_{\Omega}\frac{\partial \omega_{\delta,Z} }{\partial z_{l,\bar{h}}}\bigg\{\eta_ip\left (V_{\delta, z_i, \hat{q}_{\delta,i}}-\hat{q}_{\delta,i}\right )^{p-1}_+\frac{\partial V_{\delta, z_i, \hat{q}_{\delta,i}}}{\partial x_\hbar}+\eta_i\delta^2\text{div}\left (\frac{\partial K(x)}{\partial x_\hbar}\nabla V_{\delta, Z,i}  \right ) \\
& -2 \delta^2\left (K(x)\nabla\eta_i|\nabla   \frac{\partial V_{\delta, Z,i}}{\partial x_\hbar}  \right ) -\delta^2\text{div}\left (K(x)\nabla   \eta_i  \right )\frac{\partial V_{\delta, Z,i}}{\partial x_\hbar} \bigg\}\\
=& p\int_{\Omega}\frac{\partial \omega_{\delta,Z} }{\partial z_{l,\bar{h}}} \left (V_{\delta, z_i, \hat{q}_{\delta,i}}-\hat{q}_{\delta,i}\right )^{p-1}_+\frac{\partial V_{\delta, z_i, \hat{q}_{\delta,i}}}{\partial x_\hbar}+O\left( \delta^2|\ln\varepsilon|^{2M+1}\bigg|\bigg|\frac{\partial \omega_{\delta,Z} }{\partial z_{l,\bar{h}}}\bigg|\bigg|_{L^\infty}\right).
\end{split}
\end{equation}
Combining \eqref{5-12} with \eqref{5-14} and \eqref{5-15}, we obtain
\begin{equation}\label{5-16}
\begin{split}
\int_{\Omega}\frac{\partial \omega_{\delta,Z} }{\partial z_{l,\bar{h}}} \left (V_{\delta, z_i, \hat{q}_{\delta,i}}-\hat{q}_{\delta,i}\right )^{p-1}_+\frac{\partial V_{\delta, z_i, \hat{q}_{\delta,i}}}{\partial x_\hbar}=O\left( \frac{\varepsilon^\gamma}{ |\ln\varepsilon|^{p+1}}+\delta^2|\ln\varepsilon|^{2M+1}\bigg|\bigg|\frac{\partial \omega_{\delta,Z} }{\partial z_{l,\bar{h}}}\bigg|\bigg|_{L^\infty}\right).
\end{split}
\end{equation}
Taking \eqref{5-16} into \eqref{5-11} and using \eqref{203} and Proposition \ref{exist and uniq of w}, we conclude that
\begin{equation}\label{5-17}
\begin{split}
\frac{\partial b_{j,h}}{\partial z_{l,\bar{h}}}\cdot \frac{1}{|\ln\varepsilon|^{p+1}}=& O\left( \frac{\varepsilon^\gamma}{ |\ln\varepsilon|^{p+1}}+ \frac{\varepsilon^{1+\gamma}}{|\ln\varepsilon|^p}\bigg|\bigg|\frac{\partial \omega_{\delta,Z} }{\partial z_{l,\bar{h}}}\bigg|\bigg|_{L^\infty}\right).
\end{split}
\end{equation}

Now taking $ \frac{\partial}{\partial z_{l,\bar{h}}} $ in both sides of \eqref{5-00}, we get
\begin{equation}\label{5-18}
\begin{split}
-\delta^2\text{div}&\left( K(x)\nabla \frac{\partial\omega_{\delta,Z}}{\partial z_{l,\bar{h}}}\right)-p\left( V_{\delta,Z}+\omega_{\delta,Z}-q\right)^{p-1}_+\frac{\partial\omega_{\delta,Z}}{\partial z_{l,\bar{h}}}\\
=&\delta^2\text{div}\left( K(x)\nabla \frac{\partial V_{\delta,Z}}{\partial z_{l,\bar{h}}}\right)+p\left( V_{\delta,Z}+\omega_{\delta,Z}-q\right)^{p-1}_+\frac{\partial V_{\delta,Z}}{\partial z_{l,\bar{h}}}\\
&+\sum_{j=1}^m\sum_{h=1}^2\frac{\partial b_{j,h}}{\partial z_{l,\bar{h}}}p\left(   V_{\delta, z_j, \hat{q}_{\delta,j}}-\hat{q}_{\delta,j} \right )^{p-1}_+\frac{\partial V_{\delta, z_j, \hat{q}_{\delta,j}}}{\partial x_h}\\
&+\sum_{j=1}^m\sum_{h=1}^2b_{j,h}\frac{\partial }{\partial z_{l,\bar{h}}}\left( p\left(   V_{\delta, z_j, \hat{q}_{\delta,j}}-\hat{q}_{\delta,j} \right )^{p-1}_+\frac{\partial V_{\delta, z_j, \hat{q}_{\delta,j}}}{\partial x_h}\right) .
\end{split}
\end{equation}
Note that the function $ \frac{\partial\omega_{\delta,Z}}{\partial z_{l,\bar{h}}} $ may not be in $ E_{\delta,Z} $. We make the following decomposition:
\begin{equation}\label{5-19}
\frac{\partial\omega_{\delta,Z}}{\partial z_{l,\bar{h}}}=\omega_{\delta}^*+\sum_{j=1}^m\sum_{h=1}^2C_{j,h}\zeta_j\frac{\partial V_{\delta, z_j, \hat{q}_{\delta,j}}}{\partial x_h},
\end{equation}
where $ \omega_{\delta}^*\in E_{\delta,Z} $,  $\zeta_j(x):=\eta\left (\frac{|T_{z_j}\left( x-z_j\right) |}{s_{\delta,j}}\right )$ and $ C_{j,h} $ is determined by
\begin{equation*}
\begin{split}
\sum_{j=1}^m\sum_{h=1}^2&C_{j,h}\int_{\Omega}\zeta_j\frac{\partial V_{\delta, z_j, \hat{q}_{\delta,j}}}{\partial x_h}-\delta^2\text{div}\left( K(x)\nabla\left(\eta_i\frac{\partial V_{\delta,Z,i}}{\partial x_\hbar} \right) \right)\\
=& \int_{\Omega}\frac{\partial\omega_{\delta,Z}}{\partial z_{l,\bar{h}}}-\delta^2\text{div}\left( K(x)\nabla\left(\eta_i\frac{\partial V_{\delta,Z,i}}{\partial x_\hbar} \right)\right) \ \ \ \ i=1,\cdots,m,\ \hbar=1,2.
\end{split}
\end{equation*}
Direct computation shows that
\begin{equation*}
\int_{\Omega}\zeta_j\frac{\partial V_{\delta, z_j, \hat{q}_{\delta,j}}}{\partial x_h}-\delta^2\text{div}\left( K(x)\nabla\left(\eta_i\frac{\partial V_{\delta,Z,i}}{\partial x_\hbar} \right) \right)=\left( (\tilde{M}_i)_{h,\hbar}\delta_{i,j}+o(1)\right) \frac{1}{|\ln\varepsilon|^{p+1}},
\end{equation*}
where $ \tilde{M}_{i} $ are $ m $ positive definite matrices. Combining this with \eqref{5-12} and \eqref{5-14}, we obtain
\begin{equation}\label{5-20}
C_{j,h}=O\left( \frac{\varepsilon^\gamma}{ |\ln\varepsilon|^{p+1}}\right)\cdot|\ln\varepsilon|^{p+1}=O\left( \varepsilon^\gamma \right).
\end{equation}
Inserting \eqref{5-19} in \eqref{5-18}, we get
\begin{equation}\label{5-21}
\begin{split}
-\delta^2\text{div}&\left( K(x)\nabla  \omega_{\delta}^*\right)-p\left( V_{\delta,Z} -q\right)^{p-1}_+\omega_{\delta}^*\\
=&\sum_{j=1}^m\sum_{h=1}^2C_{j,h} \delta^2\text{div}\left( K(x)\nabla \left( \zeta_j\frac{\partial V_{\delta, z_j, \hat{q}_{\delta,j}}}{\partial x_h}\right) \right)+ \sum_{j=1}^m\sum_{h=1}^2C_{j,h}p\left( V_{\delta,Z} -q\right)^{p-1}_+\zeta_j\frac{\partial V_{\delta, z_j, \hat{q}_{\delta,j}}}{\partial x_h}\\
&+p\left( \left( V_{\delta,Z}+\omega_{\delta,Z}-q\right)^{p-1}_+-\left( V_{\delta,Z} -q\right)^{p-1}_+\right) \frac{\partial\omega_{\delta,Z}}{\partial z_{l,\bar{h}}}\\
&-p\sum_{j=1}^m\left (V_{\delta, z_j, \hat{q}_{\delta,j}}-\hat{q}_{\delta,j}\right )^{p-1}_+\frac{\partial }{\partial z_{l,\bar{h}}}\left (V_{\delta, z_j, \hat{q}_{\delta,j}}-\hat{q}_{\delta,j}\right )+p\left (V_{\delta,Z}+\omega_{\delta,Z}-q\right )^{p-1}_+\frac{\partial V_{\delta,Z}}{\partial z_{l,\bar{h}}}   \\
&+\sum_{j=1}^m\sum_{h=1}^2\frac{\partial b_{j,h}}{\partial z_{l,\bar{h}}}p\left(   V_{\delta, z_j, \hat{q}_{\delta,j}}-\hat{q}_{\delta,j} \right )^{p-1}_+\frac{\partial V_{\delta, z_j, \hat{q}_{\delta,j}}}{\partial x_h}\\
&+\sum_{j=1}^m\sum_{h=1}^2b_{j,h}\frac{\partial }{\partial z_{l,\bar{h}}}\left( p\left(   V_{\delta, z_j, \hat{q}_{\delta,j}}-\hat{q}_{\delta,j} \right )^{p-1}_+\frac{\partial V_{\delta, z_j, \hat{q}_{\delta,j}}}{\partial x_h}\right) .
\end{split}
\end{equation}
By \eqref{5-01}, \eqref{5-06}, \eqref{5-17} and \eqref{5-20}, one computes directly that
\begin{equation*}
\begin{split}
\sum_{j=1}^m&\sum_{h=1}^2C_{j,h} \delta^2\text{div}\left( K(x)\nabla \left( \zeta_j\frac{\partial V_{\delta, z_j, \hat{q}_{\delta,j}}}{\partial x_h}\right) \right)+ \sum_{j=1}^m\sum_{h=1}^2C_{j,h}p\left( V_{\delta,Z} -q\right)^{p-1}_+\zeta_j\frac{\partial V_{\delta, z_j, \hat{q}_{\delta,j}}}{\partial x_h} \\
=&\left( |C_{j,h}|\frac{\varepsilon^{\frac{2}{p}-1}}{|\ln\varepsilon|^p}\right)=O\left(  \frac{\varepsilon^{\gamma+\frac{2}{p}-1}}{|\ln\varepsilon|^p}\right)\ \ \text{in}\ L^p(\Omega),
\end{split}
\end{equation*}
\begin{equation*}
\begin{split}
p\left( \left( V_{\delta,Z}+\omega_{\delta,Z}-q\right)^{p-1}_+-\left( V_{\delta,Z} -q\right)^{p-1}_+\right) \frac{\partial\omega_{\delta,Z}}{\partial z_{l,\bar{h}}}=O\left(   \frac{\varepsilon^{\gamma+\frac{2}{p}}}{|\ln\varepsilon|^{p-1}}\bigg|\bigg|\frac{\partial \omega_{\delta,Z} }{\partial z_{l,\bar{h}}}\bigg|\bigg|_{L^\infty}\right)\ \ \text{in}\ L^p(\Omega),
\end{split}
\end{equation*}
\begin{equation*}
\begin{split}
-p&\sum_{j=1}^m\left (V_{\delta, z_j, \hat{q}_{\delta,j}}-\hat{q}_{\delta,j}\right )^{p-1}_+\frac{\partial }{\partial z_{l,\bar{h}}}\left (V_{\delta, z_j, \hat{q}_{\delta,j}}-\hat{q}_{\delta,j}\right )+p\left (V_{\delta,Z}+\omega_{\delta,Z}-q\right )^{p-1}_+\frac{\partial V_{\delta,Z}}{\partial z_{l,\bar{h}}}\\
=&O\left( \frac{\varepsilon^{\frac{2}{p}}}{|\ln\varepsilon|^{p-1}}|\ln\varepsilon|^{M+1}+ \frac{\varepsilon^{\gamma+\frac{2}{p}-1}}{|\ln\varepsilon|^p}\right) =O\left(  \frac{\varepsilon^{\gamma+\frac{2}{p}-1}}{|\ln\varepsilon|^p}\right)\ \ \text{in}\ L^p(\Omega),
\end{split}
\end{equation*}
\begin{equation*}
\begin{split}
&\sum_{j=1}^m\sum_{h=1}^2\frac{\partial b_{j,h}}{\partial z_{l,\bar{h}}}p\left(   V_{\delta, z_j, \hat{q}_{\delta,j}}-\hat{q}_{\delta,j} \right )^{p-1}_+\frac{\partial V_{\delta, z_j, \hat{q}_{\delta,j}}}{\partial x_h}\\
&=O\left(   \frac{\varepsilon^{ \frac{2}{p}}}{\varepsilon|\ln\varepsilon|^{p}}\bigg|\frac{\partial b_{j,h}}{\partial z_{l,\bar{h}}}\bigg|\right)=O\left(   \frac{\varepsilon^{ \frac{2}{p}-1}}{|\ln\varepsilon|^{p}}\left(   \varepsilon^\gamma +  \varepsilon^{1+\gamma} |\ln\varepsilon| \bigg|\bigg|\frac{\partial \omega_{\delta,Z} }{\partial z_{l,\bar{h}}}\bigg|\bigg|_{L^\infty}\right) \right)\ \ \text{in}\ L^p(\Omega),
\end{split}
\end{equation*}
\begin{equation*}
\begin{split}
&\sum_{j=1}^m\sum_{h=1}^2b_{j,h}\frac{\partial }{\partial z_{l,\bar{h}}}\left( p\left(   V_{\delta, z_j, \hat{q}_{\delta,j}}-\hat{q}_{\delta,j} \right )^{p-1}_+\frac{\partial V_{\delta, z_j, \hat{q}_{\delta,j}}}{\partial x_h}\right)
=O\left(  \frac{\varepsilon^{\gamma+\frac{2}{p}-1}}{|\ln\varepsilon|^p}\right)\ \ \text{in}\ L^p(\Omega).
\end{split}
\end{equation*}
Combining these with Lemma \ref{coercive esti} and \eqref{5-21}, we are led to
\begin{equation}\label{5-22}
||\omega_{\delta}^*||_{L^\infty }\leq C\varepsilon^{-\frac{2}{p}}|\ln\varepsilon|^{p-1} \left(\frac{\varepsilon^{\gamma+\frac{2}{p}-1}}{|\ln\varepsilon|^p}+ \frac{\varepsilon^{\gamma+\frac{2}{p}}}{|\ln\varepsilon|^{p-1}}\bigg|\bigg|\frac{\partial \omega_{\delta,Z} }{\partial z_{l,\bar{h}}}\bigg|\bigg|_{L^\infty} \right).
\end{equation}
From the decomposition \eqref{5-19}, we have
\begin{equation*}
\bigg|\bigg|\frac{\partial \omega_{\delta,Z} }{\partial z_{l,\bar{h}}}\bigg|\bigg|_{L^\infty}\leq ||\omega_{\delta}^*||_{L^\infty }+O\left( \frac{1}{\varepsilon^{1-\gamma}|\ln\varepsilon|}\right) .
\end{equation*}
Taking this in \eqref{5-22}, we obtain
\begin{equation*}
||\omega_{\delta}^*||_{L^\infty }\leq C  \frac{1}{\varepsilon^{1-\gamma}|\ln\varepsilon|},
\end{equation*}
from which we deduce, $ \big|\big|\frac{\partial \omega_{\delta,Z} }{\partial z_{l,\bar{h}}}\big|\big|_{L^\infty(\Omega)}=O\left( \frac{1}{\varepsilon^{1-\gamma}|\ln\varepsilon|} \right)  $.

Finally, we prove that $ \omega_{\delta,Z} $ is a $ C^1 $ map of $ Z\in \Lambda_{\varepsilon, m} $ in $ H^1(\Omega) $. To prove the continuity of $ \omega_{\delta,Z} $ of $ Z $, let $ Z_j\to Z_0 $. By Proposition \ref{exist and uniq of w}, $ \omega_{\delta,Z_j}  $ is uniformly bounded in $ L^\infty(\Omega) $. Thus  using \eqref{5-000} and \eqref{5-06}, we conclude that $ ||\omega_{\delta,Z_j}||_{H^1_0(\Omega)} $ is bounded by a constant $ C $  which is independent of $ j $. Then there is a subsequence (still denoted by $ Z_j $) such that
\begin{equation*}
\omega_{\delta,Z_j}\rightarrow \omega^{**}\ \ \ \  \text{weakly in } H^1_0(\Omega)
\end{equation*}
and
\begin{equation*}
\omega_{\delta,Z_j}\rightarrow \omega^{**}\ \ \ \  \text{strongly in } L^2(\Omega).
\end{equation*}
Using the equation again, we can get that
\begin{equation*}
\omega_{\delta,Z_j}\rightarrow \omega^{**}\ \ \ \  \text{strongly in } H^1_0(\Omega),
\end{equation*}
from which  we deduce that $ \omega^{**}\in E_{\delta,Z_0} $ and $ \omega^{**} $ satisfies \eqref{216} with $ Z_j $ replaced by $ Z_0 $.  By the uniqueness, we get $ \omega^{**}=\omega_{\delta,Z_0} $ and hence  $ \omega_{\delta,Z} $ is continuous in $Z$ in the norm of $ H^1_0(\Omega) $. Moreover, using similar method as  Proposition 3.7 in  \cite{CPY1}, we can get $ \frac{\partial \omega_{\delta,Z}}{\partial z_{l,h}} $ is continuous of $ Z $ in $ H^1(\Omega) $. The proof is thus complete.
\end{proof}

\section{Finite-dimensional energy expansion}

In view of Proposition \ref{exist and uniq of w}, given any $ \delta $ small and $ Z\in \Lambda_{\varepsilon, m} $, there exists a unique $ \omega_{\delta,Z}\in E_{\delta,Z} $ satisfying $ Q_\delta L_\delta \omega_{\delta,Z}=Q_\delta l_\delta+Q_\delta R_\delta(\omega_{\delta,Z}), $
i.e., for some $ b_{j,h}=b_{j,h}(Z) $
\begin{equation*}
 L_\delta \omega_{\delta,Z}= l_\delta+  R_\delta(\omega_{\delta,Z})+\sum_{j=1}^m\sum_{h=1}^2b_{j,h}\left( -\delta^2\text{div}\left (K(z_j)\nabla   \frac{\partial V_{\delta, z_j, \hat{q}_{\delta,j}}}{\partial x_h}\right )\right).
\end{equation*}
Thus, it suffices to find $Z$ solving the following finite dimensional problem
\begin{equation*}
 b_{j,h}(Z)=0,\ \ \forall\ j=1,\cdots,m,\ h=1,2.
\end{equation*}

Define
\begin{equation}\label{functional I}
I_\delta(u)=\frac{\delta^2}{2}\int_{\Omega}\left( K(x)\nabla u|\nabla u\right)-\frac{1}{p+1}\int_{\Omega}(u-q)^{p+1}_+,
\end{equation}
and
\begin{equation*}
K_\delta(Z)=I_\delta(V_{\delta,Z}+\omega_{\delta,Z}).
\end{equation*}
It follows from Proposition \ref{differ of w} that $ K_\delta(Z) $ is a $ C^1 $ function of $ Z $.

The following lemma shows that, to find solutions of \eqref{111}, it suffices to prove the existence of critical points of  $ K_\delta(Z) $.
\begin{lemma}\label{choice of Z}
If $ Z\in \Lambda_{\varepsilon, m}$ is a critical point of $ K_\delta(Z) $, then
$ V_{\delta,Z}+\omega_{\delta,Z} $  is a solution to \eqref{111}.
\end{lemma}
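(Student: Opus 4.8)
The plan is to carry out the last step of the finite-dimensional Lyapunov--Schmidt reduction. By Proposition \ref{exist and uniq of w} and \eqref{5-00}, the function $V_{\delta,Z}+\omega_{\delta,Z}$ solves \eqref{111} exactly when all the multipliers $b_{j,h}(Z)$ vanish, so it suffices to show that a critical point $Z\in\Lambda_{\varepsilon,m}$ of $K_\delta$ forces $b_{j,h}(Z)=0$ for every $j=1,\dots,m$ and $h=1,2$.

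First I would differentiate $K_\delta(Z)=I_\delta(V_{\delta,Z}+\omega_{\delta,Z})$. Since $Z\mapsto\omega_{\delta,Z}$ is $C^1$ into $H^1_0(\Omega)$ by Proposition \ref{differ of w}, and each $\partial_{z_{l,\bar h}}V_{\delta,Z,j}$ vanishes on $\partial\Omega$, both $\partial_{z_{l,\bar h}}V_{\delta,Z}$ and $\partial_{z_{l,\bar h}}\omega_{\delta,Z}$ lie in $H^1_0(\Omega)$, and the chain rule gives, for $l=1,\dots,m$ and $\bar h=1,2$,
\begin{equation*}
\frac{\partial K_\delta(Z)}{\partial z_{l,\bar h}}=I_\delta'\big(V_{\delta,Z}+\omega_{\delta,Z}\big)\Big[\partial_{z_{l,\bar h}}V_{\delta,Z}+\partial_{z_{l,\bar h}}\omega_{\delta,Z}\Big].
\end{equation*}
By \eqref{5-00}, integrating by parts, for every $\psi\in H^1_0(\Omega)$ one has
\begin{equation*}
I_\delta'\big(V_{\delta,Z}+\omega_{\delta,Z}\big)[\psi]=\sum_{j=1}^m\sum_{h=1}^2 b_{j,h}\int_\Omega\Big(-\delta^2\text{div}\big(K(z_j)\nabla\partial_{x_h}V_{\delta,z_j,\hat{q}_{\delta,j}}\big)\Big)\psi,
\end{equation*}
so taking $\psi=\partial_{z_{l,\bar h}}V_{\delta,Z}+\partial_{z_{l,\bar h}}\omega_{\delta,Z}$ yields $\displaystyle\frac{\partial K_\delta(Z)}{\partial z_{l,\bar h}}=\sum_{j=1}^m\sum_{h=1}^2 b_{j,h}\,A_{(l,\bar h),(j,h)}$, where
\begin{equation*}
A_{(l,\bar h),(j,h)}:=\int_\Omega\Big(-\delta^2\text{div}\big(K(z_j)\nabla\partial_{x_h}V_{\delta,z_j,\hat{q}_{\delta,j}}\big)\Big)\Big(\partial_{z_{l,\bar h}}V_{\delta,Z}+\partial_{z_{l,\bar h}}\omega_{\delta,Z}\Big).
\end{equation*}

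Next I would expand the matrix $A=A(Z)$. Using $-\delta^2\text{div}(K(z_j)\nabla\partial_{x_h}V_{\delta,z_j,\hat{q}_{\delta,j}})=p(V_{\delta,z_j,\hat{q}_{\delta,j}}-\hat{q}_{\delta,j})^{p-1}_+\partial_{x_h}V_{\delta,z_j,\hat{q}_{\delta,j}}$, which is supported in $B_{Ls_{\delta,j}}(z_j)$, together with the expansion $\partial_{z_{l,\bar h}}V_{\delta,Z}=-\partial_{x_{\bar h}}V_{\delta,z_l,\hat{q}_{\delta,l}}+(\text{lower order})$ following from \eqref{200}, \eqref{5-01} and \eqref{5-04}, and the bound $\|\partial_{z_{l,\bar h}}\omega_{\delta,Z}\|_{L^\infty}=O(\varepsilon^{-1+\gamma}|\ln\varepsilon|^{-1})$ from Proposition \ref{differ of w}, the same computation that led to \eqref{coef of C} gives
\begin{equation*}
A_{(l,\bar h),(j,h)}=-\delta_{j,l}\,\frac{(M_l)_{h,\bar h}}{|\ln\varepsilon|^{p+1}}+o\!\Big(\frac{1}{|\ln\varepsilon|^{p+1}}\Big),
\end{equation*}
with $M_l$ the positive definite matrices from \eqref{coef of C}, whose eigenvalues lie in a fixed interval $(\bar{c}_1,\bar{c}_2)$. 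Hence $|\ln\varepsilon|^{p+1}A$ equals $-\,\mathrm{diag}(M_1,\dots,M_m)$ plus an $o(1)$ error, uniformly for $Z\in\Lambda_{\varepsilon,m}$, so $A(Z)$ is invertible once $\varepsilon$ is small. Therefore, if $Z$ is a critical point of $K_\delta$ then $\partial_{z_{l,\bar h}}K_\delta(Z)=0$ for all $l,\bar h$, i.e.\ $A(Z)\,b=0$; invertibility forces $b_{j,h}(Z)=0$ for all $j,h$, and by \eqref{5-00} this says precisely that $V_{\delta,Z}+\omega_{\delta,Z}$ solves \eqref{111}.

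The delicate point is the expansion of $A$: one must verify that the contribution of $\partial_{z_{l,\bar h}}\omega_{\delta,Z}$ (controlled in $L^\infty$ only by $O(\varepsilon^{-1+\gamma}|\ln\varepsilon|^{-1})$) and of the non-translation parts of $\partial_{z_{l,\bar h}}V_{\delta,Z}$ (arising from the $Z$-dependence of the matrices $T_{z_j}$ and of the constants $\hat{q}_{\delta,j}$, of size $O(|\ln\varepsilon|^{M})$ with $M=m^2+1$ as in \eqref{admis set}) become genuinely negligible compared with the diagonal term $|\ln\varepsilon|^{-p-1}(M_l)_{h,\bar h}$ after integration against the $L^1$-concentrated density $p(V_{\delta,z_j,\hat{q}_{\delta,j}}-\hat{q}_{\delta,j})^{p-1}_+\partial_{x_h}V_{\delta,z_j,\hat{q}_{\delta,j}}$. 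This is exactly where the $C^1$-dependence estimates of Proposition \ref{differ of w} and the non-degeneracy expansion \eqref{coef of C} are used.
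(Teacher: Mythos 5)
Your proposal is correct and follows essentially the same route as the paper's proof: both differentiate $K_\delta$, use the Lagrange-multiplier identity \eqref{5-00}/\eqref{611} to express $\partial_{z_{l,\bar h}}K_\delta(Z)$ as a linear system in the $b_{j,h}$, and conclude $b_{j,h}(Z)=0$ because the coefficient matrix is an invertible perturbation of the block-diagonal positive-definite matrices from \eqref{coef of C}, with Proposition \ref{differ of w} controlling the contribution of $\partial_{z_{l,\bar h}}\omega_{\delta,Z}$. The only cosmetic difference is the overall sign of the leading diagonal block (your $-\,\mathrm{diag}(M_1,\dots,M_m)$ versus the paper's $+$ in \eqref{603}), which is immaterial for invertibility.
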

\begin{proof}
It follows from Proposition \ref{exist and uniq of w} that
\begin{equation}\label{611}
\begin{split}
\langle I'(V_{\delta,Z}+\omega_{\delta,Z}), \phi\rangle=\sum_{j=1}^m\sum_{h=1}^2b_{j,h}\int_{\Omega} -\delta^2\text{div}\left (K(z_j)\nabla   \frac{\partial V_{\delta, z_j, \hat{q}_{\delta,j}}}{\partial x_h}\right )\phi,\ \ \forall \phi\in H^1_0(\Omega).
\end{split}
\end{equation}
We only need to choose $ Z $, such that the corresponding constants $ b_{j,h} $  are all zero.
Suppose that $ Z $ is a critical point of $ K_\delta(Z) $. Then from \eqref{611} and Proposition \ref{differ of w}, for $ i=1,\cdots,m, \hbar=1,2 $
\begin{equation}\label{603}
\begin{split}
0=&\frac{\partial K_\delta(Z)}{\partial z_{i,\hbar}}=\left \langle I'(V_{\delta,Z}+\omega_{\delta,Z}), \frac{\partial (V_{\delta,Z}+\omega_{\delta,Z})}{\partial z_{i,\hbar}}\right \rangle\\
=&\sum_{j=1}^m\sum_{h=1}^2b_{j,h}\int_{\Omega} -\delta^2\text{div}\left (K(z_j)\nabla   \frac{\partial V_{\delta, z_j, \hat{q}_{\delta,j}}}{\partial x_h}\right )\frac{\partial (V_{\delta,Z}+\omega_{\delta,Z})}{\partial z_{i,\hbar}}\\
=&\sum_{j=1}^m\sum_{h=1}^2b_{j,h}((M_i)_{h,\hbar}\delta_{i,j}+o(\varepsilon^\gamma))\frac{1}{|\ln\varepsilon|^{p+1}}+O\left (\frac{\varepsilon}{|\ln\varepsilon|^p}\sum_{j=1}^m\sum_{h=1}^2b_{j,h}\bigg|\bigg|\frac{\partial \omega_{\delta,Z}}{\partial z_{j,h}}\bigg|\bigg|_{L^\infty}\right )\\
=&\sum_{j=1}^m\sum_{h=1}^2b_{j,h}((M_i)_{h,\hbar}\delta_{i,j}+o(\varepsilon^\gamma))\frac{1}{|\ln\varepsilon|^{p+1}}+O\left (\frac{\varepsilon^\gamma}{|\ln\varepsilon|^{p+1}}\sum_{j=1}^m\sum_{h=1}^2b_{j,h}\right ),
\end{split}
\end{equation}
from which we deduce that  $ b_{j,h}(Z)=0. $

\end{proof}

Now we give the energy expansion of the functional $ K_\delta(Z) $. We prove
the following result.
\begin{proposition}\label{pro401}
There holds
\begin{equation*}
K_{\delta}(Z)=I_\delta(V_{\delta,Z})+O\left( \frac{\varepsilon^{2+\gamma}}{|\ln\varepsilon|^{p+1}}\right).
\end{equation*}
\end{proposition}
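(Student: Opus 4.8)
The plan is to Taylor-expand the energy functional \eqref{functional I} around $V_{\delta,Z}$ in the direction $\omega_{\delta,Z}$ and to show that the linear correction, the quadratic gradient term and the nonlinear remainder are all $O(\varepsilon^{2+\gamma}/|\ln\varepsilon|^{p+1})$. Writing $\omega=\omega_{\delta,Z}$ and using $\langle I_\delta'(V_{\delta,Z}),\omega\rangle=\delta^2\int_\Omega(K(x)\nabla V_{\delta,Z}|\nabla\omega)-\int_\Omega(V_{\delta,Z}-q)^p_+\omega$, a direct computation gives
\begin{equation*}
K_\delta(Z)-I_\delta(V_{\delta,Z})=\langle I_\delta'(V_{\delta,Z}),\omega\rangle+\frac{\delta^2}{2}\int_\Omega\left(K(x)\nabla\omega|\nabla\omega\right)-\frac{1}{p+1}\int_\Omega\left[(V_{\delta,Z}+\omega-q)^{p+1}_+-(V_{\delta,Z}-q)^{p+1}_+-(p+1)(V_{\delta,Z}-q)^p_+\omega\right].
\end{equation*}

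First I would treat the linear term. By \eqref{eq2} and \eqref{eq3} the contributions $-\delta^2\text{div}((K(x)-K(z_j))\nabla V_{\delta,z_j,\hat q_{\delta,j}})$ and $-\delta^2\text{div}(K(x)\nabla H_{\delta,z_j,\hat q_{\delta,j}})$ cancel, so $-\delta^2\text{div}(K(x)\nabla V_{\delta,Z})=\sum_{j=1}^m(V_{\delta,z_j,\hat q_{\delta,j}}-\hat q_{\delta,j})^p_+$; integrating by parts (legitimate since $\omega\in H^1_0(\Omega)$) yields $\langle I_\delta'(V_{\delta,Z}),\omega\rangle=-\int_\Omega l_\delta\,\omega$. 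By Lemma \ref{lemA-5}, $l_\delta$ is supported in $\cup_{j=1}^m B_{Ls_{\delta,j}}(z_j)$, a set of measure $O(\varepsilon^2)$, and in the proof of Proposition \ref{exist and uniq of w} it was shown that $\|l_\delta\|_{L^p}\le C\varepsilon^{2/p+\gamma}/|\ln\varepsilon|^p$; combining these with $\|\omega\|_{L^\infty}=O(\varepsilon^\gamma/|\ln\varepsilon|)$ from Proposition \ref{exist and uniq of w} and Hölder's inequality gives $|\langle I_\delta'(V_{\delta,Z}),\omega\rangle|\le C\varepsilon^{2+2\gamma}/|\ln\varepsilon|^{p+1}$.

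Next, for the quadratic gradient term I would test \eqref{5-00}, i.e. $-\delta^2\text{div}(K(x)\nabla\omega)-p(V_{\delta,Z}-q)^{p-1}_+\omega=l_\delta+R_\delta(\omega)+\sum_{j,h}b_{j,h}(-\delta^2\text{div}(K(z_j)\nabla\partial_{x_h}V_{\delta,z_j,\hat q_{\delta,j}}))$, against $\omega$ itself. Since $\omega\in E_{\delta,Z}$ and $b_{j,h}=O(\varepsilon^{1+\gamma})$ by \eqref{5-06}, the projection term contributes $O(\varepsilon^{2+2\gamma}/|\ln\varepsilon|^{p+1})$, and one obtains $\delta^2\int_\Omega(K(x)\nabla\omega|\nabla\omega)=p\int_\Omega(V_{\delta,Z}-q)^{p-1}_+\omega^2+\int_\Omega l_\delta\omega+\int_\Omega R_\delta(\omega)\omega+(\text{projection remainder})$. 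Using $(V_{\delta,Z}-q)_+=O(1/|\ln\varepsilon|)$ on its support (whose measure is $O(\varepsilon^2)$), the bound $\|R_\delta(\omega)\|_{L^p}\le C\varepsilon^{2/p}\|\omega\|_{L^\infty}^2/|\ln\varepsilon|^{p-2}$, and $\|\omega\|_{L^\infty}=O(\varepsilon^\gamma/|\ln\varepsilon|)$, every term is $O(\varepsilon^{2+2\gamma}/|\ln\varepsilon|^{p+1})$ or smaller. The nonlinear remainder is handled by the elementary inequality $|(a+b)^{p+1}_+-a^{p+1}_+-(p+1)a^p_+b|\le C(a^{p-1}_+b^2+|b|^{p+1})$ for $a\ge0$ (valid since $p+1>2$), so its integral is bounded by $C\int_\Omega(V_{\delta,Z}-q)^{p-1}_+\omega^2+C\|\omega\|_{L^\infty}^{p+1}|\cup_{j}B_{Ls_{\delta,j}}(z_j)|=O(\varepsilon^{2+2\gamma}/|\ln\varepsilon|^{p+1})+O(\varepsilon^{2+(p+1)\gamma}/|\ln\varepsilon|^{p+1})$.

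Adding the three estimates and using $\varepsilon^{2+2\gamma}\le\varepsilon^{2+\gamma}$ and $\varepsilon^{2+(p+1)\gamma}\le\varepsilon^{2+\gamma}$ for $\varepsilon$ small, we conclude $K_\delta(Z)=I_\delta(V_{\delta,Z})+O(\varepsilon^{2+\gamma}/|\ln\varepsilon|^{p+1})$. The main obstacle is not conceptual but the bookkeeping of powers of $\varepsilon$ and $|\ln\varepsilon|$: one must keep careful track of the sizes of the supports $B_{Ls_{\delta,j}}(z_j)$ (recalling $s_{\delta,j}=O(\varepsilon)$) and of the quantities $(V_{\delta,Z}-q)_+$, $l_\delta$, $R_\delta(\omega)$, $\|\omega\|_{L^\infty}$ and $b_{j,h}$, so that every error term indeed falls below the threshold $\varepsilon^{2+\gamma}/|\ln\varepsilon|^{p+1}$; the structural facts (the cancellation in $-\delta^2\text{div}(K(x)\nabla V_{\delta,Z})$, the orthogonality $\omega\in E_{\delta,Z}$, and the already-established $L^p$ and $L^\infty$ bounds) make the remaining computations routine.
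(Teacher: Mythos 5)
Your proposal is correct and follows essentially the same route as the paper: decompose $K_\delta(Z)-I_\delta(V_{\delta,Z})$ into the cross term, the quadratic gradient term and the difference of the $(p+1)$-powers, then use the equation $-\delta^2\mathrm{div}(K(x)\nabla V_{\delta,Z,j})=(V_{\delta,z_j,\hat q_{\delta,j}}-\hat q_{\delta,j})^p_+$, the fixed-point equation for $\omega_{\delta,Z}$ together with $b_{j,h}=O(\varepsilon^{1+\gamma})$, the $L^\infty$ bound on $\omega_{\delta,Z}$, and the $O(\varepsilon^2)$ support confinement from Lemma \ref{lemA-5}. The only differences are cosmetic (you group the two first-order contributions into $-\int_\Omega l_\delta\,\omega$ and test \eqref{5-00} directly instead of invoking the identity $Q_\delta(-\delta^2\mathrm{div}(K\nabla\omega))=-\delta^2\mathrm{div}(K\nabla\omega)$), and all estimates land at or below the required threshold.
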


\begin{proof}
Note  that
\begin{equation*}
\begin{split}
K_{\delta}(Z)=&I_\delta(V_{\delta,Z})+\delta^2\int_{\Omega}\left( K(x)\nabla V_{\delta,Z}|\nabla \omega_{\delta,Z}\right) +\frac{\delta^2}{2}\int_{\Omega}\left( K(x)\nabla \omega_{\delta,Z}|\nabla \omega_{\delta,Z}\right)\\
&-\frac{1}{p+1}\left( \int_{\Omega}(V_{\delta,Z}+\omega_{\delta,Z}-q)^{p+1}_+-\int_{\Omega}(V_{\delta,Z}-q)^{p+1}_+\right).
\end{split}
\end{equation*}
It follows from Proposition \ref{exist and uniq of w} that
\begin{equation*}
\begin{split}
\int_{\Omega}&(V_{\delta,Z}+\omega_{\delta,Z}-q)^{p+1}_+-\int_{\Omega}(V_{\delta,Z}-q)^{p+1}_+\\
&=(p+1)\sum_{j=1}^m\int_{B_{Ls_{\delta,j}}(z_j)}(V_{\delta,Z}-q)^{p}_+\omega_{\delta,Z}
+O\left( \sum_{j=1}^m\int_{B_{Ls_{\delta,j}}(z_j)}(V_{\delta,Z}-q)^{p-1}_+\omega_{\delta,Z}^2\right)\\
&=O\left( \frac{\varepsilon^{2+\gamma}}{|\ln\varepsilon|^{p+1}}\right).
\end{split}
\end{equation*}
Since $  -\delta^2\text{div}(K(x)\nabla V_{\delta, Z,j})=(V_{\delta,z_j,\hat{q}_{\delta,j}}-\hat{q}_{\delta,j})^p_+, $
we get
\begin{equation*}
\begin{split}
\delta^2\int_{\Omega}\left( K(x)\nabla V_{\delta,Z}|\nabla \omega_{\delta,Z}\right)
=&\sum_{j=1}^m\int_{B_{Ls_{\delta,j}}(z_j)}(V_{\delta,z_j,\hat{q}_{\delta,j}}-\hat{q}_{\delta,j})^p_+\omega_{\delta,Z}
=O\left( \frac{\varepsilon^{2+\gamma}}{|\ln\varepsilon|^{p+1}}\right).
\end{split}
\end{equation*}
As for $ \frac{\delta^2}{2}\int_{\Omega}\left( K(x)\nabla \omega_{\delta,Z}|\nabla \omega_{\delta,Z}\right), $ since  $ \omega_{\delta,Z}\in E_{\delta,Z} $, we get $ -\delta^2\text{div}(K(x)\nabla \omega_{\delta,Z})\in F_{\delta,Z} $. So
\begin{equation*}
\begin{split}
Q_\delta L_\delta \omega_{\delta,Z}
=&-\delta^2\text{div}(K(x)\nabla \omega_{\delta,Z})-Q_\delta(p(V_{\delta,Z}-q)^{p-1}_+\omega_{\delta,Z}),
\end{split}
\end{equation*}
which combined with $ Q_\delta L_\delta \omega_{\delta,Z}=Q_\delta l_\delta+Q_\delta R_\delta(\omega_{\delta,Z})  $ yields
\begin{equation*}
-\delta^2\text{div}(K(x)\nabla \omega_{\delta,Z})=Q_\delta(p(V_{\delta,Z}-q)^{p-1}_+\omega_{\delta,Z})+Q_\delta l_\delta+Q_\delta R_\delta(\omega_{\delta,Z}).
\end{equation*}
Hence by Lemma \ref{lemA-5} and Proposition \ref{exist and uniq of w}, one has
\begin{equation*}
\begin{split}
\delta^2\int_{\Omega}&\left( K(x)\nabla \omega_{\delta,Z}|\nabla \omega_{\delta,Z}\right)\\
=&\int_{\Omega}Q_\delta(p(V_{\delta,Z}-q)^{p-1}_+\omega_{\delta,Z})\omega_{\delta,Z}+\int_{\Omega}Q_\delta l_\delta\omega_{\delta,Z}+\int_{\Omega}Q_\delta R_\delta(\omega_{\delta,Z})\omega_{\delta,Z}\\
=&O\left (\left( ||(p(V_{\delta,Z}-q)^{p-1}_+\omega_{\delta,Z})||_{L^1}+|| l_\delta||_{L^1}+|| R_\delta(\omega_{\delta,Z})||_{L^1}||\right) \omega_{\delta,Z}||_{L^\infty}\right )
=O\left( \frac{\varepsilon^{2+\gamma}}{|\ln\varepsilon|^{p+1}}\right).
\end{split}
\end{equation*}
To conclude, we get $ K_{\delta}(Z)=I_\delta(V_{\delta,Z})+O\left( \frac{\varepsilon^{2+\gamma}}{|\ln\varepsilon|^{p+1}}\right). $

\end{proof}

\begin{proposition}\label{order of main term}
There holds
\begin{equation}\label{344}
\begin{split}
I_\delta(V_{\delta,Z})=&\sum_{j=1}^m\frac{\pi\delta^2}{\ln\frac{1}{\varepsilon}}q(z_j)^2\sqrt{\det K(z_j)}+\sum_{j=1}^m\frac{(p-1)\pi\delta^2}{4\left(  \ln\frac{1}{\varepsilon}\right)^2}q(z_j)^2\sqrt{\det K(z_j)}\\
&-\sum_{j=1}^m\frac{2\pi^2\delta^2 q(z_j)^2  \det K(z_j)}{\left( \ln\frac{1}{\varepsilon}\right)^2}\bar{S}_K(z_j,z_j)\\
&-\sum_{1\leq i\neq j\leq m}\frac{2\pi^2\delta^2q(z_i)q(z_j)\sqrt{\det K(z_i)}\sqrt{\det K(z_j)}}{\left( \ln\frac{1}{\varepsilon}\right) ^2}G_K(z_i,z_j)+O\left( \frac{\delta^2\left( \ln|\ln\varepsilon|\right) ^2}{|\ln\varepsilon|^3}\right).
\end{split}
\end{equation}
\end{proposition}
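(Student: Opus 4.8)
The strategy is to compute $I_\delta(V_{\delta,Z})$ by splitting the integral over $\Omega$ into the union of small balls $B_{Ls_{\delta,j}}(z_j)$, where $V_{\delta,Z}-q>0$, and the complement where $(V_{\delta,Z}-q)_+=0$. On each ball we use the decomposition $V_{\delta,Z}=\sum_j(V_{\delta,z_j,\hat q_{\delta,j}}+H_{\delta,z_j,\hat q_{\delta,j}})$ together with the key local approximation \eqref{203}, which says that on $B_{Ls_{\delta,i}}(z_i)$ we have $V_{\delta,Z}-q=V_{\delta,z_i,\hat q_{\delta,i}}-\hat q_{\delta,i}+O(\varepsilon^\gamma/|\ln\varepsilon|)$. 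First I would treat the nonlinear term: since $(V_{\delta,Z}-q)^{p+1}_+$ is supported on $\cup_j B_{Ls_{\delta,j}}(z_j)$, rescaling via $x=T_{z_j}^{-1}(s_{\delta,j}y)+z_j$ reduces $\frac1{p+1}\int(V_{\delta,Z}-q)^{p+1}_+$ to $\sum_j$ of terms involving $\int_{B_1}\phi^{p+1}$, with the $O(\varepsilon^\gamma/|\ln\varepsilon|)$ error in \eqref{203} contributing a controlled correction; the Pohozaev identity \eqref{PI} converts $\int_{B_1}\phi^{p+1}$ into $\tfrac{\pi(p+1)}{2}|\phi'(1)|^2$, and the asymptotics \eqref{2000} for $\hat q_i$ and $s_{\delta,i}$ turn the prefactors into the stated $q(z_j)^2\sqrt{\det K(z_j)}/\ln\tfrac1\varepsilon$ shape, including the second-order $(p-1)$-correction.

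\textbf{The Dirichlet energy term.} Next I would handle $\frac{\delta^2}{2}\int_\Omega(K(x)\nabla V_{\delta,Z}|\nabla V_{\delta,Z})$. Integrating by parts and using $-\delta^2\mathrm{div}(K(z_j)\nabla V_{\delta,z_j,\hat q_{\delta,j}})=(V_{\delta,z_j,\hat q_{\delta,j}}-\hat q_{\delta,j})^p_+$ and the defining equation \eqref{eq3} for $H_{\delta,z_j,\hat q_{\delta,j}}$, the leading contribution becomes $\frac12\sum_{i,j}\int_\Omega(V_{\delta,z_i,\hat q_{\delta,i}}-\hat q_{\delta,i})^p_+\,V_{\delta,Z,j}$ plus error terms coming from the difference between $K(x)$ and $K(z_j)$ on the support of each bubble, which are small because that support has radius $O(s_{\delta,j})=O(\varepsilon)$. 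For the diagonal terms $i=j$ one again rescales and uses \eqref{203} plus Lemma \ref{H estimate}, so that $V_{\delta,Z,j}$ on $B_{Ls_{\delta,j}}(z_j)$ equals $\hat q_{\delta,j}+\phi$-part minus $\tfrac{2\pi\hat q_{\delta,j}\sqrt{\det K(z_j)}}{\ln s_{\delta,j}}\bar S_K(z_j,z_j)$ up to lower order, producing the main $\frac{\pi\delta^2}{\ln\frac1\varepsilon}q^2\sqrt{\det K}$ term, the Robin-function term $-\frac{2\pi^2\delta^2 q^2\det K}{(\ln\frac1\varepsilon)^2}\bar S_K(z_j,z_j)$, and part of the $(p-1)$-correction. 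For the off-diagonal terms $i\neq j$, on $B_{Ls_{\delta,i}}(z_i)$ the function $V_{\delta,Z,j}$ ($j\neq i$) is smooth and, by the last displayed estimate before Lemma \ref{lemA-5} (and \eqref{203}), equals $-\tfrac{2\pi\hat q_{\delta,j}\sqrt{\det K(z_j)}}{\ln s_{\delta,j}}G_K(z_i,z_j)+O(\varepsilon^\gamma/|\ln\varepsilon|)$, which is essentially constant over the ball; integrating $(V_{\delta,z_i,\hat q_{\delta,i}}-\hat q_{\delta,i})^p_+$ against this constant and using $\int_{B_1}\phi^p=2\pi|\phi'(1)|$ yields the cross term $-\frac{2\pi^2\delta^2 q(z_i)q(z_j)\sqrt{\det K(z_i)}\sqrt{\det K(z_j)}}{(\ln\frac1\varepsilon)^2}G_K(z_i,z_j)$.

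\textbf{Error bookkeeping and the main obstacle.} Throughout, I would track all error terms against the target remainder $O\!\big(\delta^2(\ln|\ln\varepsilon|)^2/|\ln\varepsilon|^3\big)$: the $O(\varepsilon^\gamma/|\ln\varepsilon|)$ in \eqref{203} is smaller than any negative power of $|\ln\varepsilon|$, so it is harmless; the genuine obstruction is that several naively "lower-order" pieces are only $O(\delta^2/|\ln\varepsilon|^2)\times(\text{bounded})$ and would collide with the $(\ln\frac1\varepsilon)^{-2}$ terms we want to keep, so one must expand $\hat q_i$, $1/\ln s_{\delta,i}$, $\bar S_K$, and $G_K$ to one more order using \eqref{2000}, \eqref{q_i choice}, and the $C^1$-expansion of Green's function (Lemma \ref{Green expansion2}), and verify that the next correction is genuinely of size $(\ln|\ln\varepsilon|)^2/|\ln\varepsilon|^3$ — this uses \eqref{3-003}, i.e. $G_K(z_i,z_j)\le C\ln|\ln\varepsilon|$ on $\Lambda_{\varepsilon,m}$, so products like $\big(\tfrac{\ln|\ln\varepsilon|}{|\ln\varepsilon|}\big)\cdot\big(\tfrac{\ln|\ln\varepsilon|}{|\ln\varepsilon|^2}\big)$ and $\big(\tfrac1{|\ln\varepsilon|}\big)^2(\ln|\ln\varepsilon|)^2$ appear. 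Hence the main technical difficulty is the careful second-order asymptotic expansion of the prefactors $\hat q_{\delta,i}/\ln s_{\delta,i}$ and the disciplined separation of which quadratic-in-$(\ln\frac1\varepsilon)^{-1}$ contributions survive; once that is done, collecting the diagonal self-energy terms, the Robin terms, and the off-diagonal interaction terms, and invoking Proposition \ref{pro401} to replace $K_\delta(Z)$ by $I_\delta(V_{\delta,Z})$, gives \eqref{344}.
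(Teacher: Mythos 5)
Your proposal is correct and follows essentially the same route as the paper's proof: integrate by parts using $-\delta^2\text{div}(K(x)\nabla V_{\delta,Z,j})=(V_{\delta,z_j,\hat q_{\delta,j}}-\hat q_{\delta,j})^p_+$, split into diagonal, off-diagonal and nonlinear pieces supported in $\cup_j B_{Ls_{\delta,j}}(z_j)$, rescale and apply the Pohozaev identities \eqref{PI} together with Lemma \ref{H estimate} and \eqref{203}, and finally expand $\hat q_{\delta,j}$ and $\ln\frac{1}{s_{\delta,j}}$ to second order via \eqref{q_i choice} and \eqref{2000}. One bookkeeping caution: the direct contribution of $H_{\delta,z_j,\hat q_{\delta,j}}$ to the diagonal term is $+2\pi^2\delta^2\hat q_{\delta,j}^2\det K(z_j)\bar S_K(z_j,z_j)\big/\bigl(\ln\tfrac{1}{s_{\delta,j}}\bigr)^2$ (after the factor $\tfrac12$), and the minus signs on the $\bar S_K$ and $G_K$ terms in \eqref{344} emerge only after the leading term $\pi\delta^2\hat q_{\delta,j}^2/\ln\tfrac{1}{s_{\delta,j}}$ is itself expanded using \eqref{q_i choice}, which contributes $-4\pi^2(\cdots)$ and overcancels the $+2\pi^2(\cdots)$ pieces — so the "one more order" expansion you flag in your last paragraph is not optional fine-tuning but is what fixes the signs and coefficients of the $(\ln\frac1\varepsilon)^{-2}$ terms.
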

\begin{proof}
Note that
\begin{equation}\label{218}
\begin{split}
I_\delta(V_{\delta,Z})=&\frac{1}{2}\int_{\Omega}-\delta^2\text{div}\left( K(x)\nabla V_{\delta,Z}  \right)V_{\delta,Z}-\frac{1}{p+1}\int_{\Omega}(V_{\delta,Z}-q)^{p+1}_+\\
=&\frac{1}{2}\sum_{j=1}^m\int_{\Omega}(V_{\delta,z_j,\hat{q}_{\delta,j}}-\hat{q}_{\delta,j})^p_+V_{\delta,Z,j} +\frac{1}{2}\sum_{1\leq i\neq j\leq m}\int_{\Omega}(V_{\delta,z_j,\hat{q}_{\delta,j}}-\hat{q}_{\delta,j})^p_+V_{\delta,Z,i}\\
&-\frac{1}{p+1}\int_{\Omega}(V_{\delta,Z}-q)^{p+1}_+.
\end{split}
\end{equation}
By   the definition of $ V_{\delta,Z,j} $, we have
\begin{equation*}
\begin{split}
\int_{\Omega}(&V_{\delta,z_j,\hat{q}_{\delta,j}}-\hat{q}_{\delta,j})^p_+V_{\delta,Z,j}\\
=&\hat{q}_{\delta,j}\int_{\Omega}(V_{\delta,z_j,\hat{q}_{\delta,j}}-\hat{q}_{\delta,j})^p_++\int_{\Omega}(V_{\delta,z_j,\hat{q}_{\delta,j}}-\hat{q}_{\delta,j})^{p+1}_++\int_{\Omega}(V_{\delta,z_j,\hat{q}_{\delta,j}}-\hat{q}_{\delta,j})^p_+H_{\delta,z_j,\hat{q}_{\delta,j}}.
\end{split}
\end{equation*}
By the definition of $ V_{\delta,z_j,\hat{q}_{\delta,j}} $, the fact that $ T_{z_j}^{-1}(T_{z_j}^{-1})^t=K(z_j) $ and \eqref{PI}, we get
\begin{equation*}
\begin{split}
\hat{q}_{\delta,j}&\int_{\Omega}(V_{\delta,z_j,\hat{q}_{\delta,j}}-\hat{q}_{\delta,j})^p_+\\
=&\hat{q}_{\delta,j} s_{\delta,j}^2(\frac{\delta}{s_{\delta,j}})^{\frac{2p}{p-1}}\int_{|T_{z_j}x|\leq 1}\phi(T_{z_j}x)^pdx\\
=&\hat{q}_{\delta,j}\delta^2|\phi'(1)|^{p-1}\left( \frac{\ln\frac{1}{s_{\delta,j}}}{\hat{q}_{\delta,j}}\right)^{p-1}|\phi'(1)|^{-p}\left( \frac{\ln\frac{1}{s_{\delta,j}}}{\hat{q}_{\delta,j}}\right)^{-p}\sqrt{\det(K(z_j))}\cdot 2\pi|\phi'(1)|\\
=&\frac{2\pi\delta^2}{\ln\frac{1}{s_{\delta,j}}}\hat{q}_{\delta,j}^2\sqrt{\det(K(z_j))}.
\end{split}
\end{equation*}
Similarly,
\begin{equation*}
\begin{split}
\int_{\Omega}(V_{\delta,z_j,\hat{q}_{\delta,j}}-\hat{q}_{\delta,j})^{p+1}_+
=& s_{\delta,j}^2\left( \frac{\delta}{s_{\delta,j}}\right)^{\frac{2(p+1)}{p-1}}\sqrt{\det(K(z_j))}\cdot \frac{(p+1)\pi}{2}|\phi'(1)|^2\\
=&\frac{(p+1)\pi\delta^2}{2\left(  \ln\frac{1}{s_{\delta,j}}\right)^2}\hat{q}_{\delta,j}^2\sqrt{\det(K(z_j))}.
\end{split}
\end{equation*}
By Lemma \ref{H estimate}, we have
\begin{equation*}
\begin{split}
\int_{\Omega}(&V_{\delta,z_j,\hat{q}_{\delta,j}}-\hat{q}_{\delta,j})^p_+H_{\delta,z_j,\hat{q}_{\delta,j}}\\
=&\frac{2\pi\hat{q}_{\delta,j}\sqrt{\det K(z_j)}}{\ln \frac{1}{s_{\delta,j}}}\int_{\Omega}(V_{\delta,z_j,\hat{q}_{\delta,j}}-\hat{q}_{\delta,j})^p_+\bar{S}_K(x,z_j)dx+O\left(  \frac{\varepsilon^\gamma}{|\ln\varepsilon|}\int_{\Omega}(V_{\delta, z_j, \hat{q}_{\delta,j}}-\hat{q}_{\delta,j})^{p}_+\right) \\
=&\frac{4\pi^2\delta^2 \bar{S}_K(z_j,z_j)}{\left( \ln\frac{1}{s_{\delta,j}}\right)^2}\hat{q}_{\delta,j}^2\cdot \det(K(z_j))+O\left(\frac{\varepsilon^{2+\gamma}}{|\ln\varepsilon|^{p+1}}\right),
\end{split}
\end{equation*}
from which we deduce
\begin{equation}\label{219}
\begin{split}
\int_{\Omega}(V_{\delta,z_j,\hat{q}_{\delta,j}}-\hat{q}_{\delta,j})^p_+V_{\delta,Z,j}
=&\frac{2\pi\delta^2}{\ln\frac{1}{s_{\delta,j}}}\hat{q}_{\delta,j}^2\sqrt{\det(K(z_j))}+\frac{(p+1)\pi\delta^2}{2\left(  \ln\frac{1}{s_{\delta,j}}\right)^2}\hat{q}_{\delta,j}^2\sqrt{\det(K(z_j))}\\
&+\frac{4\pi^2\delta^2 \bar{S}_K(z_j,z_j)}{\left( \ln\frac{1}{s_{\delta,j}}\right)^2}\hat{q}_{\delta,j}^2  \det(K(z_j))+O\left(\frac{\varepsilon^{2+\gamma}}{|\ln\varepsilon|^{p+1}}\right).
\end{split}
\end{equation}
Similarly by Lemmas \ref{Green expansion}, \ref{H estimate}, the definition of $ \Lambda_{\varepsilon, m} $ and the fact that $ \lim_{\varepsilon\to 0}\varepsilon|\ln\varepsilon|=0 $, for $ 1\leq i\neq j\leq m $
\begin{equation}\label{220}
\begin{split}
\int_{\Omega}(&V_{\delta,z_j,\hat{q}_{\delta,j}}-\hat{q}_{\delta,j})^p_+V_{\delta,Z,i}\\
=&\frac{2\pi\hat{q}_{\delta,i}\sqrt{\det K(z_i)}}{\ln\frac{1}{s_{\delta,i}}}\int_{\Omega}(V_{\delta,z_j,\hat{q}_{\delta,j}}-\hat{q}_{\delta,j})^p_+G_K(x,z_i)dx+O\left(\frac{\varepsilon^{2+\gamma}}{|\ln\varepsilon|^{p+1}}\right)\\
=&\frac{4\pi^2\delta^2G_K(z_j,z_i)}{\ln\frac{1}{s_{\delta,i}}\ln\frac{1}{s_{\delta,j}}}\hat{q}_{\delta,i}\hat{q}_{\delta,j}\sqrt{\det(K(z_i))}\sqrt{\det(K(z_j))}+O\left(\frac{\varepsilon^{2+\gamma}}{|\ln\varepsilon|^{p+1}}\right).
\end{split}
\end{equation}

Finally by \eqref{203},
\begin{equation}\label{223}
\begin{split}
\int_{\Omega}(V_{\delta,Z}-q)^{p+1}_+
=&\sum_{j=1}^m\int_{B_{Ls_{\delta,j}}(z_j)}\left( V_{\delta, z_j, \hat{q}_{\delta,j}}-\hat{q}_{\delta,j}+O\left( \frac{\varepsilon^\gamma}{|\ln\varepsilon|}\right) \right)^{p+1}_+\\
=&\sum_{j=1}^m\int_{\Omega}(V_{\delta, z_j, \hat{q}_{\delta,j}}-\hat{q}_{\delta,j})^{p+1}_++O\left( \frac{\varepsilon^\gamma}{|\ln\varepsilon|}\sum_{j=1}^m\int_{\Omega}(V_{\delta, z_j, \hat{q}_{\delta,j}}-\hat{q}_{\delta,j})^{p}_+\right) \\
=&\sum_{j=1}^m\frac{(p+1)\pi\delta^2}{2\left(  \ln\frac{1}{s_{\delta,j}}\right)^2}\hat{q}_{\delta,j}^2\sqrt{\det(K(z_j))}+O\left(\frac{\varepsilon^{2+\gamma}}{|\ln\varepsilon|^{p+1}}\right).
\end{split}
\end{equation}
Taking \eqref{219}, \eqref{220} and \eqref{223} into \eqref{218}, one has
\begin{equation}\label{225}
\begin{split}
I_\delta(V_{\delta,Z})=&\sum_{j=1}^m\frac{\pi\delta^2}{\ln\frac{1}{s_{\delta,j}}}\hat{q}_{\delta,j}^2\sqrt{\det(K(z_j))}+\sum_{j=1}^m\frac{(p+1)\pi\delta^2}{4\left(  \ln\frac{1}{s_{\delta,j}}\right)^2}\hat{q}_{\delta,j}^2\sqrt{\det(K(z_j))}\\
&+\sum_{j=1}^m\frac{2\pi^2\delta^2 \bar{S}_K(z_j,z_j)}{\left( \ln\frac{1}{s_{\delta,j}}\right)^2}\hat{q}_{\delta,j}^2  \det(K(z_j))\\
&+\sum_{1\leq i\neq j\leq m}\frac{2\pi^2\delta^2G_K(z_j,z_i)}{\ln\frac{1}{s_{\delta,i}}\ln\frac{1}{s_{\delta,j}}}\hat{q}_{\delta,i}\hat{q}_{\delta,j}\sqrt{\det(K(z_i))}\sqrt{\det(K(z_j))}\\
&-\sum_{j=1}^m\frac{\pi\delta^2}{2\left(  \ln\frac{1}{s_{\delta,j}}\right)^2}\hat{q}_{\delta,j}^2\sqrt{\det(K(z_j))}+O\left(\frac{\varepsilon^{2+\gamma}}{|\ln\varepsilon|^{p+1}}\right).
\end{split}
\end{equation}
Taking \eqref{3-003}, \eqref{q_i choice} and \eqref{2000} into \eqref{225}, we get
\eqref{344}. 

\end{proof}
\section{Proof of Theorem \ref{thm1}}
Let $ x_{0} $ be a strict local maximum point  of $ q^2\sqrt{\det(K)} $ in $ \Omega $, i.e., there exists $ \bar{\rho}>0 $ sufficiently small such that $ B_{\bar{\rho}}(x_{0})\Subset\Omega   $ and
\begin{equation}\label{6-01}
q^2\sqrt{\det(K)}(y)< q^2\sqrt{\det(K)}(x_0)\ \ \ \ \forall y\in B_{\bar{\rho}}(x_{0})\backslash\{x_0\}.
\end{equation}

Now we prove the existence of maximizers of $ K_\delta(Z) $ in $  \Lambda_{\varepsilon, m}$.
Note that by Propositions \ref{pro401} and \ref{order of main term},
\begin{equation}\label{6-02}
\begin{split}
K_\delta(Z)=&\sum_{j=1}^m\frac{\pi\delta^2}{\ln\frac{1}{\varepsilon}}q^2\sqrt{\det K}(z_j)
-\sum_{1\leq i\neq j\leq m}\frac{2\pi^2\delta^2q(z_i)q(z_j)\sqrt{\det K(z_i)}\sqrt{\det K(z_j)}}{\left( \ln\frac{1}{\varepsilon}\right) ^2}G_K(z_i,z_j)\\
&+O\left( \frac{\delta^2 }{|\ln\varepsilon|^2}\right).
\end{split}
\end{equation}
We have
\begin{lemma}\label{finite dimen solu}
For any $ \delta $ sufficiently small, the following maximization problem
\begin{equation*}
\max\limits_{Z\in \overline{\Lambda_{\varepsilon, m}}}K_\delta(Z)
\end{equation*}
has a solution $ Z_\delta\in\Lambda_{\varepsilon, m}. $
\end{lemma}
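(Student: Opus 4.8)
The plan is a compactness argument combined with a boundary-exclusion estimate. First I would record that $\Lambda_{\varepsilon, m}\neq\varnothing$ for $\varepsilon$ small (the regular $m$-gon $z_j=x_0+|\ln\varepsilon|^{-1}(\cos\tfrac{2\pi j}{m},\sin\tfrac{2\pi j}{m})$, $j=1,\dots,m$, lies in it, since $|\ln\varepsilon|^{-1}<\bar\rho$ and $2|\ln\varepsilon|^{-1}\sin\tfrac\pi m>|\ln\varepsilon|^{-M}$ once $\varepsilon$ is small), and that $\overline{\Lambda_{\varepsilon, m}}$ is compact, being a closed bounded subset of $\overline{B_{\bar\rho}(x_0)}^{\,m}\subset\mathbb{R}^{2m}$ cut out by the closed constraint $\min_{i\neq j}|z_i-z_j|\geq|\ln\varepsilon|^{-M}$. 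Since $K_\delta$ is continuous (indeed $C^1$, by Proposition \ref{differ of w}), the maximum is attained at some $Z_\delta\in\overline{\Lambda_{\varepsilon, m}}$, and it remains only to show $Z_\delta\notin\partial\Lambda_{\varepsilon, m}$. The boundary splits as $\Sigma_1\cup\Sigma_2$, where on $\Sigma_1$ some $z_i\in\partial B_{\bar\rho}(x_0)$ and on $\Sigma_2$ one has $\min_{i\neq j}|z_i-z_j|=|\ln\varepsilon|^{-M}$. I would rule these out by exhibiting the fixed interior competitor $Z^{*}$ given by the $m$-gon above and proving $K_\delta(Z^{*})>\sup_{\partial\Lambda_{\varepsilon, m}}K_\delta$.

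All estimates run through the expansion \eqref{6-02}, together with three ingredients: positivity $G_K>0$ on $\Omega\times\Omega$ (maximum principle); the uniform near-diagonal asymptotics $G_K(x,y)=\tfrac1{2\pi}\sqrt{\det K(y)}^{-1}\ln\tfrac1{|x-y|}+O(1)$, read off from \eqref{exp of G_K} because $F_{1,y}(x),F_{2,y}(x)=O\!\big(|x-y|\ln\tfrac1{|x-y|}\big)$ and $\bar H_1$ is bounded on compacta; and the strict local maximum, which gives $\sigma_1:=q^2\sqrt{\det K}(x_0)>\sigma_0:=\max_{|y-x_0|=\bar\rho}q^2\sqrt{\det K}(y)$ and, since $\nabla(q^2\sqrt{\det K})(x_0)=0$, also $q^2\sqrt{\det K}(y)\geq\sigma_1-C|y-x_0|^2$ near $x_0$. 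Using \eqref{3-003} on $\overline{\Lambda_{\varepsilon, m}}$ (so that every interaction term is bounded and nonnegative), one obtains the uniform bound $K_\delta(Z)\leq\frac{\pi\delta^2}{\ln(1/\varepsilon)}\sum_{j}q^2\sqrt{\det K}(z_j)+O\!\big(\tfrac{\delta^2}{|\ln\varepsilon|^2}\big)$, while the competitor satisfies $K_\delta(Z^{*})=\frac{m\pi\delta^2\sigma_1}{\ln(1/\varepsilon)}+O\!\big(\tfrac{\delta^2\ln|\ln\varepsilon|}{|\ln\varepsilon|^2}\big)$.

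On $\Sigma_1$ the uniform bound reads $K_\delta(Z)\leq\frac{\pi\delta^2}{\ln(1/\varepsilon)}\big((m-1)\sigma_1+\sigma_0\big)+O\!\big(\tfrac{\delta^2}{|\ln\varepsilon|^2}\big)$, hence $K_\delta(Z^{*})-K_\delta(Z)\geq\frac{\pi\delta^2(\sigma_1-\sigma_0)}{\ln(1/\varepsilon)}-O\!\big(\tfrac{\delta^2\ln|\ln\varepsilon|}{|\ln\varepsilon|^2}\big)>0$ for $\varepsilon$ small, the gap being of the larger order $\delta^2/|\ln\varepsilon|$. On $\Sigma_2$, let $\{z_1,z_2\}$ realize the minimal distance; discarding the remaining nonnegative interaction terms, using $\sum_j q^2\sqrt{\det K}(z_j)\leq m\sigma_1$ and the near-diagonal lower bound $G_K(z_1,z_2)\geq cM\ln|\ln\varepsilon|-C$, I get $K_\delta(Z)\leq\frac{m\pi\delta^2\sigma_1}{\ln(1/\varepsilon)}-c_0 M\,\tfrac{\delta^2\ln|\ln\varepsilon|}{|\ln\varepsilon|^2}+O\!\big(\tfrac{\delta^2}{|\ln\varepsilon|^2}\big)$ with $c_0>0$ depending only on $K,q$ on $B_{\bar\rho}(x_0)$. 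Since the competitor has only $m(m-1)$ ordered pairs, each at mutual distance $\asymp|\ln\varepsilon|^{-1}$, its interaction costs at most $C_1 m(m-1)\tfrac{\delta^2\ln|\ln\varepsilon|}{|\ln\varepsilon|^2}$, so $K_\delta(Z^{*})-K_\delta(Z)\geq\big(c_0 M-C_1 m(m-1)\big)\tfrac{\delta^2\ln|\ln\varepsilon|}{|\ln\varepsilon|^2}-O\!\big(\tfrac{\delta^2}{|\ln\varepsilon|^2}\big)$, which is positive for $\varepsilon$ small provided $c_0 M>C_1 m(m-1)$. Tracking the constants through the Green-function asymptotics, the ratio $c_0 M/(C_1 m(m-1))$ tends to $2M/(m(m-1))=2(m^2+1)/(m(m-1))>1$ as $\bar\rho\downarrow 0$, so the inequality holds after replacing $\bar\rho$ by a smaller value — which does not affect the rest of the construction, whose conclusions involve only $|\ln\varepsilon|^{-m^2-2}$-scale balls. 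Combining the two cases, $Z_\delta$ lies in the open set $\Lambda_{\varepsilon, m}$.

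The delicate point is the collision case $\Sigma_2$: it is precisely here that the quadratic-in-$m$ choice $M=m^2+1$ in \eqref{admis set} is used, so that the single diverging self-interaction term of size $\sim M\ln|\ln\varepsilon|$ produced by two colliding bubbles dominates the $\binom m2\sim m^2/2$ logarithmic pair interactions of the competing polygon; making this quantitative requires the uniform near-diagonal expansion of $G_K$ from Lemma \ref{Green expansion2}. The case $\Sigma_1$, by contrast, is soft, since there the loss is of the larger order $\delta^2/|\ln\varepsilon|$.
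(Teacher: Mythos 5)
Your proposal is correct and follows essentially the same route as the paper: compactness of $\overline{\Lambda_{\varepsilon,m}}$, a regular-polygon competitor shrinking to $x_0$, and exclusion of the two boundary components, with the collision case resting on $M=m^2+1$ beating the $O(m^2)$ pair-interaction cost of the competitor via the near-diagonal expansion of $G_K$. The only differences are cosmetic — the paper takes the competitor at scale $|\ln\varepsilon|^{-1/2}$ rather than $|\ln\varepsilon|^{-1}$, and handles the constant comparison by directly noting $M\gg m(m-1)/2$ instead of shrinking $\bar\rho$ — and both variants are valid.
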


\begin{proof}
Clearly $ K_\delta $ has a maximizer in $ \overline{\Lambda_{\varepsilon, m}} $.
Let $ Z_\delta=(z_{1,\delta}, \cdots, z_{m,\delta})\in\overline{\Lambda_{\varepsilon, m}} $ be a maximizer. It suffices to prove that $ Z_\delta\in\Lambda_{\varepsilon, m}. $ We choose a test function
\begin{equation*}
z_j^0=x_0+\frac{1}{\sqrt{|\ln\varepsilon|}}\hat{z}_j^0,
\end{equation*}
where $ \hat{z}_j^0=\left (\cos\frac{(j-1)\pi}{m}, \sin\frac{(j-1)\pi}{m}\right ) $, $  j=1,\cdots,m $, form a $ m $-regular polygon with radius 1 in $ \mathbb{R}^2 $. Then it is easy to see that $ (z_1^0,\cdots, z_m^0)\in\Lambda_{\varepsilon, m} $ since $ |z_j^0-z_i^0|\geq C|\ln\varepsilon|^{-\frac{1}{2}}\geq |\ln\varepsilon|^{-M} $. Using Lemma \ref{Green expansion}, \eqref{6-01} and \eqref{6-02}, one computes directly that
\begin{equation}\label{6-03}
\begin{split}
\max\limits_{Z\in \Lambda_{\varepsilon, m}}K_\delta(Z)\geq& K_\delta((z_1^0,\cdots, z_m^0))\\
\geq & \frac{m\pi\delta^2q^2\sqrt{\det K}(x_0) }{|\ln \varepsilon|}
-\sum_{1\leq i\neq j\leq m}\frac{\pi\delta^2q^2\sqrt{\det K}(z_j^0)}{  |\ln \varepsilon|  ^2}\ln\frac{1}{|z_i^0-z_j^0|}
+O\left( \frac{\delta^2 }{|\ln\varepsilon|^2}\right)\\
\geq & \frac{m\pi\delta^2q^2\sqrt{\det K}(x_0) }{|\ln \varepsilon|}
-\frac{m(m-1)\pi\delta^2q^2\sqrt{\det K}(x_0)}{ 2 |\ln \varepsilon|  ^2}\ln|\ln\varepsilon|
+O\left( \frac{\delta^2 }{|\ln\varepsilon|^2}\right).
\end{split}
\end{equation}

We assume that $ (z_{1,\delta}, \cdots, z_{m,\delta})\in \partial\Lambda_{\varepsilon, m} $. There are two possibilities: either there exists a $ j_0 $ such that $ z_{j_0,\delta}\in \partial B_{\bar{\rho}}(x_{0}),  $ in which case, $ q^2\sqrt{\det K}(z_{j_0,\delta})\leq  q^2\sqrt{\det K}(x_0)-\sigma_0  $ for some $ \sigma_0>0 $; or there exists $ i_0\neq j_0 $ such that $ |z_{i_0,\delta}-z_{j_0,\delta}|=|\ln\varepsilon|^{-M}. $

In the first case, we have
\begin{equation}\label{6-04}
\begin{split}
\max\limits_{ \Lambda_{\varepsilon, m}}K_\delta \leq & \frac{\pi\delta^2\left( mq^2\sqrt{\det K}(x_0)-\sigma_0 \right) }{|\ln \varepsilon|}
+O\left( \frac{\delta^2 \ln|\ln\varepsilon|}{|\ln\varepsilon|^2}\right),
\end{split}
\end{equation}
which contradicts \eqref{6-03} for $ \varepsilon $ sufficiently small. This also shows that $ \lim_{\varepsilon\to 0} q^2\sqrt{\det K}(z_{j_0,\delta})=q^2\sqrt{\det K}(x_0)  $.  By assumptions, we have $ \lim_{\varepsilon\to 0}  z_{j_0,\delta} =x_0. $

In the second case, by \eqref{6-02} we have
\begin{equation}\label{6-05}
\begin{split}
\max\limits_{ \Lambda_{\varepsilon, m}}K_\delta \leq&  \frac{m\pi\delta^2q^2\sqrt{\det K}(x_0) }{|\ln \varepsilon|}
- \frac{\pi\delta^2q^2\sqrt{\det K}(z_{j_0,\delta})}{  |\ln \varepsilon|  ^2}\ln\frac{1}{|z_{j_0,\delta}-z_{j_0,\delta}|}
+O\left( \frac{\delta^2 }{|\ln\varepsilon|^2}\right)\\
\leq &  \frac{m\pi\delta^2q^2\sqrt{\det K}(x_0) }{|\ln \varepsilon|}
- \frac{M\pi\delta^2q^2\sqrt{\det K}(z_{j_0,\delta})}{  |\ln \varepsilon|  ^2}\ln|\ln\varepsilon|
+O\left( \frac{\delta^2 }{|\ln\varepsilon|^2}\right).
\end{split}
\end{equation}
Combining \eqref{6-03} with \eqref{6-05}, we get
\begin{equation*}
\frac{M\pi\delta^2q^2\sqrt{\det K}(z_{j_0,\delta})}{  |\ln \varepsilon|  ^2}\ln|\ln\varepsilon|\leq \frac{m(m-1)\pi\delta^2q^2\sqrt{\det K}(x_0)}{ 2 |\ln \varepsilon|  ^2}\ln|\ln\varepsilon|.
\end{equation*}
This clearly contradicts with the choice of $ M=m^2+1 $ for $ \varepsilon $ sufficiently small. Thus we get  $ Z_\delta\in\Lambda_{\varepsilon, m} $.

\end{proof}

{\bf Proof of Theorem \ref{thm1}:}
From Lemma \ref{finite dimen solu}, we know that for $ \delta>0 $ sufficiently small, there exists   $ Z_\delta=(z_{1,\delta}, \cdots, z_{m,\delta}) $ being a critical point of $ K_\delta(Z) $ in $ \Lambda_{\varepsilon, m} $ and as $ \delta\to 0 $,
\begin{equation*}
(z_{1,\delta}, \cdots, z_{m,\delta})\to (x_{0},\cdots,x_{0}).
\end{equation*}
Lemma \ref{choice of Z} then guarantees that $ v_\delta= \sum_{j=1}^mV_{\delta, Z,j}+\omega_{\delta,Z } $ is a clustered solution to \eqref{111}.

Let $ u_\varepsilon=|\ln\varepsilon|v_\delta $ and $ \delta=\varepsilon|\ln\varepsilon|^{-\frac{p-1}{2}} $, then $ u_\varepsilon $ is a  solution to \eqref{eq1-1}. Define $ A_{\varepsilon,i}=\{u_\varepsilon>q\ln\frac{1}{\varepsilon}\}\cap B_{|\ln\varepsilon|^{-M-1}}(z_{i,\delta})  $. From Lemma \ref{lemA-5}, there exist $ R_1, R_2>0 $ such that
\begin{equation*}
B_{R_1\varepsilon}(z_{i,\delta})\subseteq A_{\varepsilon,i}\subseteq B_{R_2\varepsilon}(z_{i,\delta}) .
\end{equation*}
It remains to  calculate the limiting value of $\frac{1}{\varepsilon^2}\int_{\Omega}\left (u_\varepsilon-q\ln\frac{1}{\varepsilon}\right )^{p}_+dx. $ 
We have
\begin{lemma}\label{circulation}
There holds for $ i=1,\cdots,m $
\begin{equation*}
\lim_{\varepsilon\to 0}\frac{1}{\varepsilon^2}\int_{B_{|\ln\varepsilon|^{-M-1}}(z_{i,\delta})}\left (u_\varepsilon-q\ln\frac{1}{\varepsilon}\right )^{p}_+dx=2\pi q\sqrt{\det K}(x_{0}).
\end{equation*}
As a consequence,
\begin{equation*}
\lim_{\varepsilon\to 0}\frac{1}{\varepsilon^2}\int_{\Omega}\left (u_\varepsilon-q\ln\frac{1}{\varepsilon}\right )^{p}_+dx=2\pi m q\sqrt{\det K}(x_{0}).
\end{equation*}
\end{lemma}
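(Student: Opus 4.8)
The plan is to reduce the integral over $B_{|\ln\varepsilon|^{-M-1}}(z_{i,\delta})$ for $u_\varepsilon$ to an integral over $B_{Ls_{\delta,i}}(z_{i,\delta})$ for $v_\delta=V_{\delta,Z}+\omega_{\delta,Z}$, then to an integral over $B_{Ls_{\delta,i}}(z_{i,\delta})$ for the single bubble $V_{\delta,z_{i,\delta},\hat q_{\delta,i}}$, and finally to the explicit model integral $\int_{B_1(0)}\phi^p$ via rescaling. First I would recall that $u_\varepsilon=|\ln\varepsilon|v_\delta$ and $\delta=\varepsilon|\ln\varepsilon|^{-\frac{p-1}{2}}$, so that
\begin{equation*}
\frac{1}{\varepsilon^2}\int_{B_{|\ln\varepsilon|^{-M-1}}(z_{i,\delta})}\left(u_\varepsilon-q\ln\tfrac1\varepsilon\right)^p_+dx
=\frac{|\ln\varepsilon|^{p}}{\varepsilon^2}\int_{B_{|\ln\varepsilon|^{-M-1}}(z_{i,\delta})}\left(v_\delta-q\right)^p_+dx.
\end{equation*}
By Lemma \ref{lemA-5} the set $\{v_\delta>q\}$ is contained in $\cup_j T_{z_j}^{-1}B_{Ls_{\delta,j}}(0)+z_j$, and the balls $T_{z_j}^{-1}B_{Ls_{\delta,j}}(0)+z_j$ are pairwise disjoint and sit inside the disjoint balls $B_{|\ln\varepsilon|^{-M-1}}(z_{j,\delta})$ for $\varepsilon$ small; hence the integral over $B_{|\ln\varepsilon|^{-M-1}}(z_{i,\delta})$ equals the integral over $T_{z_i}^{-1}B_{Ls_{\delta,i}}(0)+z_i$.

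Next I would use the key pointwise expansion \eqref{203}, which gives on $B_{Ls_{\delta,i}}(z_{i,\delta})$
\begin{equation*}
v_\delta(x)-q(x)=V_{\delta,z_{i,\delta},\hat q_{\delta,i}}(x)-\hat q_{\delta,i}+\omega_{\delta,Z}(x)+O\!\left(\frac{\varepsilon^\gamma}{|\ln\varepsilon|}\right),
\end{equation*}
and by Proposition \ref{exist and uniq of w} also $\|\omega_{\delta,Z}\|_{L^\infty}=O(\varepsilon^\gamma/|\ln\varepsilon|)$. Since $V_{\delta,z_{i,\delta},\hat q_{\delta,i}}-\hat q_{\delta,i}=O(1/|\ln\varepsilon|)$ on this ball and the ball has area $O(\varepsilon^2)$, expanding $(a+b)^p=a^p+O(|a|^{p-1}|b|+|b|^p)$ with $a=V_{\delta,z_{i,\delta},\hat q_{\delta,i}}-\hat q_{\delta,i}$, $b=\omega_{\delta,Z}+O(\varepsilon^\gamma/|\ln\varepsilon|)$ yields
\begin{equation*}
\int_{B_{Ls_{\delta,i}}(z_{i,\delta})}(v_\delta-q)^p_+dx
=\int_{\mathbb{R}^2}(V_{\delta,z_{i,\delta},\hat q_{\delta,i}}-\hat q_{\delta,i})^p_+dx+O\!\left(\frac{\varepsilon^{2+\gamma}}{|\ln\varepsilon|^{p}}\right).
\end{equation*}
Then I would compute the model integral exactly as in the proof of Proposition \ref{order of main term}: rescaling $x\mapsto T_{z_{i,\delta}}x=s_{\delta,i}y$ and using $T_{z_{i,\delta}}^{-1}(T_{z_{i,\delta}}^{-1})^t=K(z_{i,\delta})$ together with \eqref{PI},
\begin{equation*}
\int_{\mathbb{R}^2}(V_{\delta,z_{i,\delta},\hat q_{\delta,i}}-\hat q_{\delta,i})^p_+dx
=s_{\delta,i}^2\left(\tfrac{\delta}{s_{\delta,i}}\right)^{\frac{2p}{p-1}}\sqrt{\det K(z_{i,\delta})}\int_{B_1(0)}\phi^p
=\frac{2\pi\delta^2}{\ln\frac1{s_{\delta,i}}}\,\hat q_{\delta,i}\,\sqrt{\det K(z_{i,\delta})},
\end{equation*}
where the last equality uses $\delta^{\frac{2}{p-1}}s_{\delta,i}^{-\frac{2}{p-1}}=\hat q_{\delta,i}/(|\phi'(1)|\ln\frac1{s_{\delta,i}})$ and $\int_{B_1}\phi^p=2\pi|\phi'(1)|$.

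Multiplying back by $|\ln\varepsilon|^p/\varepsilon^2$ and inserting $\delta^2=\varepsilon^2|\ln\varepsilon|^{-(p-1)}$, the main term becomes $|\ln\varepsilon|^p\cdot|\ln\varepsilon|^{-(p-1)}\cdot\frac{2\pi}{\ln\frac1{s_{\delta,i}}}\hat q_{\delta,i}\sqrt{\det K(z_{i,\delta})}=\frac{2\pi|\ln\varepsilon|}{\ln\frac1{s_{\delta,i}}}\hat q_{\delta,i}\sqrt{\det K(z_{i,\delta})}$, while the error term contributes $O(|\ln\varepsilon|^p\varepsilon^{2+\gamma}/(|\ln\varepsilon|^p\varepsilon^2))=O(\varepsilon^\gamma)\to0$. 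Now I invoke \eqref{2000}, namely $\hat q_{\delta,i}=q(z_{i,\delta})+O(\ln|\ln\varepsilon|/|\ln\varepsilon|)$ and $|\ln\varepsilon|/\ln\frac1{s_{\delta,i}}=1+O(\ln|\ln\varepsilon|/|\ln\varepsilon|)$, together with the already-established convergence $z_{i,\delta}\to x_0$ and continuity of $q$ and $\det K$; this gives the limit $2\pi q\sqrt{\det K}(x_0)$. Summing over $i=1,\dots,m$ and using that $\{v_\delta>q\}\subseteq\cup_i B_{|\ln\varepsilon|^{-M-1}}(z_{i,\delta})$ with the balls disjoint yields the second assertion. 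The only delicate point is bookkeeping the error orders consistently through the rescaling — in particular checking that the $O(\varepsilon^\gamma/|\ln\varepsilon|)$ term in \eqref{203} and the $L^\infty$ bound on $\omega_{\delta,Z}$ genuinely produce a contribution that is $o(1)$ after multiplication by $|\ln\varepsilon|^p/\varepsilon^2$, which they do since each is $O(\varepsilon^{2+\gamma}/|\ln\varepsilon|^{p})$ after integration over the $O(\varepsilon^2)$-area ball; everything else is a direct consequence of the lemmas already proved.
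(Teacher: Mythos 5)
Your proposal is correct and follows essentially the same route as the paper's proof: reduce via $u_\varepsilon=|\ln\varepsilon|v_\delta$ and Lemma \ref{lemA-5} to an integral over $B_{Ls_{\delta,i}}(z_{i,\delta})$, replace $v_\delta-q$ by $V_{\delta,z_{i,\delta},\hat q_{\delta,i}}-\hat q_{\delta,i}+O(\varepsilon^\gamma/|\ln\varepsilon|)$ using \eqref{203} and Proposition \ref{exist and uniq of w}, evaluate the model integral by rescaling and the Pohozaev identity \eqref{PI}, and pass to the limit with \eqref{2000} and $z_{i,\delta}\to x_0$. The only difference is that you spell out the error bookkeeping that the paper compresses into a single display, and your accounting is consistent.
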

\begin{proof}
	
It follows from  \eqref{q_i choice}, \eqref{203} and Proposition \ref{exist and uniq of w} that
\begin{equation*}
\begin{split}
\frac{1}{\varepsilon^2}&\int_{B_{|\ln\varepsilon|^{-M-1}}(z_{i,\delta})}\left (u_\varepsilon-q\ln\frac{1}{\varepsilon}\right )^{p}_+dx
=	\frac{|\ln\varepsilon|^p}{\varepsilon^2}\int_{B_{|\ln\varepsilon|^{-M-1}}(z_{i,\delta})}(w_\delta-q)^{p}_+dx\\
&=\frac{|\ln\varepsilon|^p}{\varepsilon^2}\int_{B_{Ls_{\delta,i}}(z_{i,\delta})}\left(V_{\delta, z_{i,\delta}, \hat{q}_{\delta,i}}(x)-\hat{q}_{\delta,i}+O\left( \frac{\varepsilon^\gamma}{|\ln\varepsilon|}\right)\right)^{p}_+dx\\
&=\frac{|\ln\varepsilon|}{\delta^2}\delta^2|\phi'(1)|^{p-1}\left( \frac{\ln\frac{1}{s_{\delta,i}}}{\hat{q}_{\delta,i}}\right)^{p-1}|\phi'(1)|^{-p}\left( \frac{\ln\frac{1}{s_{\delta,i}}}{\hat{q}_{\delta,i}}\right)^{-p}\sqrt{\det K(z_{i,\delta})}\cdot 2\pi|\phi'(1)|+o(1)\\
&\to 2\pi q \sqrt{\det K }(x_{0})\ \ \ \  \text{as}\ \delta\to0.
\end{split}
\end{equation*}

\end{proof}
The rest of properties of $ u_\varepsilon $ can be easily deduced from the decomposition of $ v_\delta $ in \eqref{solu config} and we finish the proof of Theorem \ref{thm1}.

\section{Proof of Theorem \ref{thm2}}
It suffices to consider solutions to the problem
\begin{equation}\label{eq01}
\begin{cases}
-\varepsilon^2\text{div}(K_H(x)\nabla u)=  \left(u-\left( \frac{\alpha|x|^2}{2}+\beta\right)\ln\frac{1}{\varepsilon}\right)^{p}_+,\ \ &x\in B_{R^*}(0),\\
u=0,\ \ &x\in\partial  B_{R^*}(0).
\end{cases}
\end{equation}
Let $ v=u\backslash|\ln\varepsilon|  $ and $ \delta=\varepsilon|\ln\varepsilon|^{-\frac{p-1}{2}} $, then
\begin{equation}\label{eq02}
\begin{cases}
-\delta^2\text{div}(K_H(x)\nabla v)=  \left( v-\left( \frac{\alpha|x|^2}{2}+\beta\right) \right)^{p}_+,\ \ &x\in B_{R^*}(0),\\
v=0,\ \ &x\in\partial  B_{R^*}(0).
\end{cases}
\end{equation}
Note that \eqref{eq02} coincides with \eqref{111} with $ q=\frac{\alpha|x|^2}{2}+\beta $, $ K=K_H $ and $ \Omega=B_{R^*}(0) $. However,  since $ q^2\sqrt{det(K_H)} $ is a radial function and the set of  extreme points is rotational-invariant, results of Theorem \ref{thm2} can not be deduced directly from those of Theorem \ref{thm1}. 


Let $ x_0  $ be a strict local maximizer of $ q^2\sqrt{\det K_H} $ up to a rotation. 
By Lemma \ref{coercive esti} and Proposition \ref{exist and uniq of w}, for any $ Z\in \Lambda_{\varepsilon, m} $ there exists a unique $ \omega_{\delta,Z}\in E_{\delta,Z} $ such that $ Q_\delta L_\delta \omega_{\delta,Z}=Q_\delta l_\delta+Q_\delta R_\delta(\omega_{\delta,Z}). $ So it remains to prove the existence of  maximizers $ Z=Z_{\delta} $ of $ K_\delta  $  near $ x_0 $.  Indeed,  by the rotational symmetry of $ q $ and  $ \det K_H $, one computes directly that
\begin{equation}\label{501}
\begin{array}{ll}
K_\delta(Z)=\sum_{j=1}^m\frac{\pi\delta^2}{\ln\frac{1}{\varepsilon}}q^2\sqrt{\det K}(z_j)-\sum_{1\leq i\neq j\leq m}\frac{2\pi^2\delta^2q(z_i)q(z_j)\sqrt{\det K(z_i)}\sqrt{\det K(z_j)}}{\left( \ln\frac{1}{\varepsilon}\right) ^2}G_K(z_i,z_j)&\\
\qquad\qquad\,+\tilde{N}_\delta(Z),&
\end{array}
\end{equation}
where $ \tilde{N}_\delta(Z) $ is a $ O\left (\frac{\delta^2}{|\ln\varepsilon|^2}\right ) -$perturbation term which is invariant under a rotation. 
So it is not hard to prove  the existence of $ Z_{\delta} $ near $ x_0 $ being a maximizer of $ K_\delta  $, which yields a solution $ v_\delta $ of \eqref{eq02}. Let $ u_\varepsilon=v_\delta|\ln\varepsilon| $, then $ u_\varepsilon $ is a solution of \eqref{eq01}. Moreover, one has
\begin{equation*}
\lim_{\varepsilon\to 0}\frac{1}{\varepsilon^2}\int_{\Omega}\left( u_\varepsilon-q\ln\frac{1}{\varepsilon}\right)^{p}_+dx=2\pi m q \sqrt{\det(K_H)}(x_0)=\frac{km\pi(\alpha|x_0|^2+2\beta)}{\sqrt{k^2+|x_0|^2}}.
\end{equation*}
\textbf{Proof of  Corollary \ref{cor3}}: We choose $ \alpha $ and $ \beta $ such that $ \alpha<0 $ and $ \min_{x\in B_{R^*}(0)}\left( \frac{\alpha|x|^2}{2}+\beta\right)>0 $ in Theorem \ref{thm2}. Then $ (0,0) $ is the unique strict local maximizer of $ \left( \frac{\alpha|x|^2}{2}+\beta\right)^2\sqrt{\det K_H} $ up to a rotation. Thus by Theorem \ref{thm2}, for any $ m\in \mathbb{N}^* $ there exist clustered helical rotational-invariant vorticity fields $ \textbf{w}_\varepsilon $ to \eqref{Euler eq2}  with angular velocity $ \alpha|\ln\varepsilon| $, whose support sets are $ m $ helical tubes and collapse into $ x_3- $axis as $\varepsilon\to 0$. Moreover, the circulations satisfy as $\varepsilon\to 0$
\begin{equation*}
\int_{B_{R^*}(0)} \omega_\varepsilon dx\to 2\pi m \beta.
\end{equation*}

\subsection*{Acknowledgments:}

\par
D. Cao was partially supported by National Key R \& D
Program ( 2022YFA1005602 ). J. Wan was supported by NNSF of China (grant No. 12101045 and 12271539).

\subsection*{Conflict of interest statement} On behalf of all authors, the corresponding author states that there is no conflict of interest.

\subsection*{Data availability statement} All data generated or analysed during this study are included in this published article  and its supplementary information files.

\end{document}